\documentclass[a4paper,reqno,12pt]{amsart}

\usepackage{amsmath,amsthm,amssymb,amsfonts,mathtools}
\usepackage{amscd,tikz-cd}
\usepackage[all]{xy} 
\usepackage{comment,todonotes}

\usepackage[capitalize]{cleveref}
\theoremstyle{plain} 
\newtheorem{thm}{Theorem}[section]
\newtheorem*{thm*}{Theorem}
\newtheorem{lem}[thm]{Lemma}
\newtheorem{prop}[thm]{Proposition}
\newtheorem{cor}[thm]{Corollary}

\newtheorem{athm}{Theorem}[section]

\theoremstyle{definition}
\newtheorem{dfn}[thm]{Definition}
\newtheorem{eg}[thm]{Example}

\newtheorem{rem}[thm]{Remark}
\newtheorem{conj}[thm]{Conjecture}
\newtheorem*{conj*}{Conjecture}

\newtheorem*{nota-conv}{Notation and Convention}
\newtheorem*{ack}{Acknowledgements}

\theoremstyle{remark}
\newtheorem*{pf}{Proof}

\numberwithin{equation}{section}

\newcommand{\ZZ}{\mathbb{Z}}
\newcommand{\QQ}{\mathbb{Q}}
\newcommand{\RR}{\mathbb{R}}
\newcommand{\CC}{\mathbb{C}}
\newcommand{\PP}{\mathbb{P}}

\newcommand{\HH}{\mathbb{H}}

\newcommand{\C}{\mathcal{C}}
\newcommand{\D}{\mathcal{D}}

\def\H{\mathcal{H}}

\def\L{\mathcal{L}}

\def\O{\mathcal{O}}
\def\P{\mathcal{P}}

\DeclareMathOperator{\Aut}{Aut}
\DeclareMathOperator{\MCG}{MCG}

\DeclareMathOperator{\Hom}{Hom}

\DeclareMathOperator{\Num}{Num}
\DeclareMathOperator{\NS}{NS}
\DeclareMathOperator{\Pic}{Pic}

\DeclareMathOperator{\rk}{rk}
\DeclareMathOperator{\ord}{ord}

\DeclareMathOperator{\id}{id}

\DeclareMathOperator{\im}{im}

\DeclareMathOperator{\scl}{scl}

\DeclareMathOperator{\Gal}{Gal}
\DeclareMathOperator{\MW}{MW}
\newcommand{\bPic}{\mathbf{Pic}}
\DeclareMathOperator{\Spec}{Spec}

\newcommand{\note}[1]{\textcolor{red}{#1}}

\begin{document}
\title[Gap theorems and achirality]
{Gap theorems and achirality for automorphisms of K3 surfaces and Enriques surfaces}

\author{Kohei Kikuta}
\address{Department of Mathematics, Graduate School of Science, Osaka University, Toyonaka Osaka, 560-0043, Japan.
\vspace{1mm}
\newline
\hspace*{4mm}
School of Mathematics, 
The University of Edinburgh, 
James Clerk Maxwell Building, 
Peter Guthrie Tait Road, Edinburgh, Scotland, EH9 3FD, UK.}
\email{kikuta@math.sci.osaka-u.ac.jp}

\author{Yuta Takada}
\address{Mathematical Sciences, The University of Tokyo, 
Tokyo 153-8914, Japan; JSPS Research Fellow.
\vspace{1mm}
\newline
\hspace*{4mm}
Department of Mathematics, 
Saarland University, Saarbr\"ucken 66123, Germany.}
\email{ytakada@ms.u-tokyo.ac.jp}

\author{Taiki Takatsu}
\address{Department of Mathematics,
Tokyo University of Science,
2641 Yamazaki,
Noda-shi,
Chiba Prefecture 278-8510,
Japan}
\email{taiki\_takatsu@rs.tus.ac.jp}

\begin{abstract}
We prove gap theorems for entropy norms on automorphism groups of K3 surfaces, Enriques surfaces, 
and irreducible holomorphic symplectic manifolds. 
We also study the achirality of automorphisms of K3 surfaces and Enriques surfaces in terms of genus-one fibrations.
\end{abstract}

\maketitle
\date{}

\section{Introduction}

\subsection{Gap theorems and achirality}

Commutator length (or commutator norm) on a group, as well as its stabilization,
stable commutator length (abbreviated as \emph{scl}), has been studied since the 1970s (\cite[Remark 2.3]{scl}).
Motivated by the Margulis thick-thin decomposition of a closed hyperbolic manifold, 
Calegari investigated the relationship between stable commutator length on its fundamental group and the lengths of closed geodesics, 
and proved a (spectral) gap theorem for $\scl$ on the fundamental group \cite{Calegari2008Length}.

A phenomenon that does not occur for fundamental groups of closed hyperbolic manifolds is the existence of non-chiral (i.e. achiral) elements.
Here, an element $a$ of a group $G$ is said to be \emph{achiral} 
if $a^m$ is conjugate to $a^{-m}$ for some $m\in\ZZ_{>0}$. 
Since the $\scl$ of an achiral element is zero, achiral elements must be treated separately in any generalization of the gap theorem.
More generally, Calegari--Fujiwara \cite{Cal-Fuj10} proved the gap theorem for $\scl$ for chiral loxodromic elements in hyperbolic groups and mapping class groups, 
and observed that the gap phenomenon for $\scl$ arises from a certain form of hyperbolicity of these groups, namely, from a suitable isometric action on a (Gromov) hyperbolic space.
In the case of the action of the fundamental group of a closed hyperbolic manifold on its universal cover $\HH$, 
the chirality of a loxodromic element admits a geometric interpretation in terms of its geodesic axis in $\HH$:
chirality excludes symmetries (that is, conjugacies) giving rise to an ``anti-aligned'' geodesic axis.
This geometric picture of chirality continues to work for hyperbolic groups and also for their generalizations, 
such as WPD actions and acylindrical actions on hyperbolic spaces.

\subsection{Main results}

A major new feature of this paper is that we apply the above ideas to automorphism groups of algebraic varieties.
For an algebraic variety $X$, let $\Aut(X)$ denote its automorphism group.
In this paper, we consider the cases where $X$ is a K3 surface, an Enriques surface, or an irreducible holomorphic symplectic manifold.
In this setting, it has been established by the first-named author and the third-named author that 
these automorphism groups act naturally on the associated hyperboloid as geometrically finite Kleinian groups \cite{Kikuta,Takatsu}, 
and hence are relatively hyperbolic.
This makes it possible to study automorphism groups by means of geometric group theory.

We now state the main results of this paper. 

\subsubsection{Gap theorem for entropy norm}

In the context of the study of conjugation-invariant norms on groups,
Brandenbursky--Marcinkowski introduced the notion of entropy norm on the (Hamiltonian) diffeomorphism group of a real surface, 
defined as the word norm associated with the set of diffeomorphisms of zero topological entropy \cite{Brandenbursky-Marcinkowski2019}.
In other words, this norm measures how many dynamically simple diffeomorphisms are needed in order to express a given diffeomorphism as their product.

For automorphisms of algebraic varieties over the complex numbers $\CC$,
the topological entropy is described, by the Gromov--Yomdin theorem, in terms of the spectral radius of the induced action on cohomology groups.
This theorem has greatly stimulated the dynamical study of algebraic varieties.
Moreover, even for algebraic varieties not necessarily defined over $\CC$, one can define entropy homologically (or numerically).

In this paper, first, in the same spirit as in the case of diffeomorphism groups, 
we introduce the entropy norm $e$ on $\Aut(X)$ by using entropy, and prove a gap theorem for its stabilization $\overline{e}$.

\begin{athm}[\cref{Gap_thm: ent_Aut}]\label{Gap_thm: ent_Aut-intro}
Let $X$ be a K3 surface or an Enriques surface over an algebraically closed field of characteristic different from $2$, or a projective irreducible holomorphic symplectic manifold. 
\begin{enumerate}
\item
An automorphism $g\in \Aut(X)$ of positive entropy is achiral if and only if $\overline{e}(g)=0$. 
\item 
There exists a positive constant $\Theta$ 
such that $\overline{e}(g)\ge \Theta$ for any chiral automorphism $g\in\Aut(X)$ of positive entropy. 
\end{enumerate}
\end{athm}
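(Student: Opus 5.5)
We work with the action of $G=\Aut(X)$ (or its image in the isometry group of the relevant lattice) on the hyperboloid $\HH$ attached to the Néron--Severi lattice, or to $H^2$ with the Beauville--Bogomolov form. By \cite{Kikuta,Takatsu} this action is geometrically finite, and $G$ is hyperbolic relative to its maximal parabolic subgroups. The entropy norm $e$ is the word norm for the conjugation-invariant generating set $S$ of automorphisms of zero entropy. These are exactly the elements acting elliptically or parabolically on $\HH$, since the entropy is $\log$ of the spectral radius, which equals the translation length of the isometry.

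The plan is to build a hyperbolic space on which every element of $S$ has a bounded orbit while positive-entropy elements remain loxodromic. First we cone off $\HH$ along the horoballs of the parabolic subgroups. For Kleinian groups of this type, the coned-off space $\hat\HH$ is hyperbolic and the induced $G$-action is acylindrical, so loxodromics of $\HH$ stay loxodromic, i.e.\ WPD. Elliptic elements have finite image modulo the finite kernel, so they act with bounded orbits. To make the generating set $S$ act boundedly uniformly, we pass to the coned-off Cayley graph of $G$ relative to $S$, which is quasi-isometric to $\hat\HH$ up to the finite-order part. We check this via the Bestvina--Bromberg--Fujiwara criterion: $\|g\|_e \ge c\,\tau_{\hat\HH}(g)-C$ whenever all elements of $S$ have uniformly bounded displacement on some point. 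The hard part is uniformity over infinitely many conjugacy classes of elliptic and parabolic elements. I would handle this by noting that, by geometric finiteness, there are finitely many conjugacy classes of maximal finite subgroups and of cusps, and that conjugation-invariance lets each element of $S$ fix a point or horoball in a bounded neighbourhood of the $G$-orbit.

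With this in place, we invoke the Bestvina--Fujiwara quasimorphism machinery in the acylindrical form of Calegari--Fujiwara \cite{Cal-Fuj10}. For a chiral loxodromic $g$ (chiral meaning no conjugate anti-aligns its axis, which is exactly the achirality definition), there is a homogeneous quasimorphism $\phi$ with $\phi(g)\ge 1$ and defect $D(\phi)\le D_0$. Here $D_0$ depends only on the hyperbolicity and acylindricity constants, not on $g$. Next we bound $|\phi|$ on $S$ uniformly. Each $s\in S$ is elliptic in $\hat\HH$, so $\phi(s)=0$ by homogeneity. Then for $g^n=s_1\cdots s_k$ we get $n\phi(g)\le (k-1)D_0$, so $\overline e(g)\ge 1/D_0=:\Theta$. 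This proves (2), and it also gives the ``if'' direction of (1).

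For the converse in (1), suppose $h g^m h^{-1}=g^{-m}$. Then $g^{2m}=g^m\cdot h^{-1}g^{m}h$, and this can be written as a product of boundedly many conjugates, using the standard trick that $a\,h^{-1}a^{-1}h\cdots$ shows $g^{mn}$ is a product of a bounded number of commutators of the form $[g^{k},h]$. Since $e$ is conjugation-invariant it suffices to show $h$ itself has bounded $e$-norm. The commutator $[x,h]=x\,(h x^{-1}h^{-1})$ is a product of two conjugates of $x^{\pm1}$, which does not directly help. Instead we use $\|[x,h]\|_e\le 2\|h\|_e$. So $\|g^{2mn}\|_e\le 2\|h\|_e+\text{const}$ for all $n$, and therefore $\overline e(g)=0$. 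The subtle point is that $h$ has finite $e$-norm, which requires that $S$ generates $G$ (or that we work in the subgroup it generates). I would establish generation by showing that $G$ is generated by elliptic and parabolic elements modulo a finite-index issue, via a Dirichlet-domain argument for the geometrically finite action. I expect this, together with the uniform boundedness of $S$ above, to be the main technical obstacle.
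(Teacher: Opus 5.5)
\textbf{The gap is in the ``only if'' direction of (1).} First a small slip: as written, $g^m\cdot h^{-1}g^mh=1$. The correct identity is $g^{2mn}=g^{mn}h^{-1}g^{-mn}h$.

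Your bound $e(g^{2mn})\le 2e(h)$ only helps if $h\in\langle S\rangle$. You propose to get this by showing that $\Aut(X)$ is, up to finite index, generated by elliptic and parabolic elements, and that is false in general. Oguiso constructed projective K3 surfaces of Picard number $2$ with no $(-2)$-classes and no isotropic classes whose automorphism group is infinite cyclic, generated by a positive-entropy automorphism. There $S=\{\id\}$.

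The missing observation is that the conjugator is automatically of finite order:
\begin{itemize}
\item $h^2$ commutes with $g^m$.
\item The action on $\HH_X$ is proper, so the centralizer of the loxodromic element $g^m$ is virtually cyclic.
\item If $h^2$ had infinite order, then $h^{2k}=g^{mj}$ for some $k,j\neq 0$. Conjugating by $h$ gives $g^{mj}=g^{-mj}$, which is impossible since $g$ has infinite order.
\end{itemize}
Geometrically: $h$ swaps the two fixed points of $g^m$ on $\partial\HH_X$, so it reverses the axis and fixes a point of it; properness then forces finite order.

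So $h$ has finite order, hence zero entropy, hence $h\in S$. Then $e(g^{2mn})\le 2$ for all $n$, and $\overline{e}(g)=0$, with no generation statement needed. This is exactly the paper's argument (\cref{achiral-implies} and \cref{achiral-WPD-normzero}). It matters because $\overline{e}$ is set to $\infty$ when no power of $g$ lies in $\langle S\rangle$.

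\textbf{Part (2).} Your overall architecture matches the paper's: geometric finiteness, relative hyperbolicity, an acylindrical action on a coned-off space where zero-entropy automorphisms have translation length $0$ and positive-entropy ones stay loxodromic, then uniform counting quasimorphisms. That positive-entropy elements stay loxodromic is a separate input (Osin), not a consequence of acylindricity. Three further points:
\begin{itemize}
\item Drop the detour through the Cayley graph relative to $S$ and the ``uniformly bounded displacement'' criterion. $S$ is conjugation-invariant and contains parabolic elements (and all their powers), so no point has uniformly bounded displacement under all of $S$; that hypothesis is never met.
\item The detour is also unnecessary. All you use is $\overline{\phi}(s)=0$ for $s\in S$. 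This holds because $\tau(s)=0$ and counting quasimorphisms $\psi$ satisfy $|\psi(s^n)|\le d(x_0,s^nx_0)$ (\cref{lem:uniform-bd}); homogeneity alone would not give it.
\item The uniform estimate ($\phi(g)\ge 1$ with defect at most $D_0$, for every chiral loxodromic $g$) is not available verbatim from Calegari--Fujiwara, whose argument uses genuine geodesic axes. In the acylindrical setting it is precisely the content of \cref{th:uniform-lower-bd}, proved via the no-reverse-counting \cref{prop:key-for-uniform-lower-bd}.
\end{itemize}
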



The key ingredients in the proof of \cref{Gap_thm: ent_Aut-intro} are as follows:  
\begin{itemize}
\item The geometrical finiteness of $\Aut(X)$, established in \cite{Kikuta,Takatsu}.
\item The acylindrical action of a geometrically finite Kleinian group on a certain hyperbolic graph, 
constructed in \cite{Osin}. 
\item A generalization of the arguments in \cite{Cal-Fuj10} to groups admitting acylindrical actions 
on a hyperbolic space, developed in \S\ref{subection: main-estimate}. 
\end{itemize}

By a similar argument, we also prove a gap theorem for the entropy norm on mapping class groups by using the action on the curve graph; see \cref{Gap_thm: ent_MCG}.

\subsubsection{Achirality of parabolic automorphisms}
In this subsection, we consider the case where $X$ is a K3 surface or an Enriques surface.
Automorphisms of finite order are clearly achiral, 
and the achirality of positive-entropy automorphisms can be described in terms of the entropy norm by \cref{Gap_thm: ent_Aut-intro}. 
Accordingly,  we focus on 
automorphisms of infinite order and zero entropy (equivalently, parabolic automorphisms).
It is known that such an automorphism preserves a genus-one fibration. 
To state our theorems, we recall some terminology; see \S\ref{sec:para} for details. 
Let $X$ be a K3 surface or an Enriques surface
over an algebraically closed field, and 
let $p:X\to B$ be a genus-one fibration. 
We say that an automorphism $g$ of $X$ \emph{preserves} $p$ if  
there exists $b(g):B\to B$ such that $b(g)\circ p = p \circ g$. 
The group of automorphisms of $X$ that preserve $p$ is denoted by $\Aut_p(X)$. 
One associates to $p$ another genus-one fibration $J_p: J_X \to B$ called 
the \emph{Jacobian fibration}. The Jacobian fibration admits a canonical section. 
We write $[-1]\in \Aut_{J_p}(J_X)$ for the inversion involution with respect to 
the canonical section. There exists a natural homomorphism 
\[ \varphi:\Aut_p(X) \to \Aut_{J_p}(J_X) \]
defined by the push-forward of line bundles. 

We say that a subset $A$ of a group $\Gamma$ is \emph{uniformly achiral} if 
there exists $b\in \Gamma$ such that for every $a\in A$, there exists 
$m\in \ZZ_{>0}$ satisfying $ba^mb^{-1} = a^{-m}$. 


\begin{athm}[\cref{prop:exsistenceof-1,th:para_K3}]
Let $X$ be a K3 surface over an algebraically closed field,
and let $p: X \rightarrow B$ be an elliptic fibration of positive Mordell--Weil rank.
Then the following conditions are equivalent:
\begin{enumerate}
\item The image of $\varphi:\Aut_p(X)\to \Aut_{J_p}(J_X)$ contains $[-1]$. 
\item The multisection index of $p$ is either $1$ or $2$. 
\item There exist $g, h \in \Aut_p(X)$ such that $g$ is parabolic, 
$\ord b(h) \neq \ord \varphi(h)$, and $h \circ g \circ h^{-1} = g^{-1}$. 
\end{enumerate}
Moreover, if one of these conditions holds, then $\Aut_p(X)$ is uniformly achiral. 
\end{athm}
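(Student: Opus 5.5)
We argue through the torsor structure of $p$. Write $\eta$ for the generic point of $B$ and $E=J_X\times_B\eta$ for the generic fibre of $J_p$, an elliptic curve over $K=k(B)$. The generic fibre $X_\eta$ is a torsor under $E$, whose class $[X_\eta]\in H^1(K,E)$ has period equal to the multisection index $d$ of $p$. Every $h\in\Aut_p(X)$ acts on $X_\eta$ compatibly with $b(h)$ on $K$. Its image $\varphi(h)$ acts on $E$ as a group automorphism composed with a translation, and the linear part $\alpha(h)\in\Aut(E_{\bar\eta})$ is well defined. The Mordell--Weil group $\MW(J_p)=E(K)$ acts on $X_\eta$ by translations, and since $X$ is a relatively minimal model these translations extend to automorphisms of $X$. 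Translations by non-torsion sections are exactly the parabolic automorphisms in $\Aut_p(X)$ with trivial $b$, up to finite index; positive Mordell--Weil rank guarantees that such elements exist.

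\emph{(1)$\Leftrightarrow$(2).} The key point is that an automorphism $h$ of the torsor $X_\eta$ covering $[-1]$ on $E$, over $b=\id$, exists if and only if $[-1]_*[X_\eta]=[X_\eta]$, that is, $-c=c$, that is, $2c=0$ in $H^1(K,E)$. Indeed, twisting the torsor by an automorphism $\alpha$ of $E$ changes the class to $\alpha_*c$, and an isomorphism of torsors exists exactly when the classes agree. Since the period of $c$ equals the index (Lichtenbaum / Ogg--Shafarevich over function fields of curves over algebraically closed fields), $2c=0$ is equivalent to $d\in\{1,2\}$. The one subtlety is that $\varphi(h)=[-1]$ also forces $b(h)=\id$, since $[-1]$ lies over $\id_B$. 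We then extend $h$ from the generic fibre to $X$ using minimality of K3 surfaces.

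\emph{(1)$\Rightarrow$(3) and uniform achirality.} Take $h$ with $\varphi(h)=[-1]$. For any $g\in\Aut_p(X)$, some power $g^m$ lies in the finite-index subgroup acting as a translation $t_\sigma$ with $\sigma\in E(K)$. This holds because the subgroup of $\Aut_p(X)$ with $b(g)=\id$ and $\alpha(g)=1$ has finite index, as $\Aut(B)$-images and $\Aut(E_{\bar\eta})$ are finite for parabolic-containing groups; we cite the finiteness of the image of $b$ for a K3 genus-one fibration. Then $h t_\sigma h^{-1}=t_{-\sigma}=t_\sigma^{-1}$, because $h$ acts on the torsor with linear part $-1$. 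This gives uniform achirality with the single element $b=h$. For (3), take $g=t_\sigma$ with $\sigma$ non-torsion, which is parabolic. Then $\ord b(h)=1\neq 2=\ord\varphi(h)$.

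\emph{(3)$\Rightarrow$(1).} This is the step I expect to be the main obstacle. From $hgh^{-1}=g^{-1}$ with $g$ parabolic, we pass to a power so that $g=t_\sigma$ is a non-torsion translation. Conjugation then gives $\alpha(h)\,b(h)^*\sigma=-\sigma$. We must show that $\varphi(h)$, possibly after composing with a translation in the image and taking a suitable power, yields $[-1]$. The hypothesis $\ord b(h)\neq\ord\varphi(h)$ is designed to rule out the case where $h$ is merely a lift of a base automorphism, acting on $\MW$ through $b(h)^*$ with trivial linear part. I would analyse $\varphi(h)^n$ with $n=\ord b(h)$: it lies over $\id_B$ and is nontrivial, so it equals $\alpha\circ t_\tau$ with $\alpha\neq1$. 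We then need to force $\alpha=-1$. The possible linear parts are $\pm1$, or order $3,4,6$ for $j=0,1728$, and an element of order $>2$ cannot invert the non-torsion section after the $b$-twist. Composing with translations from $\MW$ (which lie in the image of $\varphi$) and using divisibility arguments on $\tau$ should then eliminate $t_\tau$ modulo $2\MW$. Handling that residual translation, and the characteristic $2$ and $3$ cases, is where I expect the real work lies.
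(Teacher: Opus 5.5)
Your treatment of (1)$\Leftrightarrow$(2), of (1)$\Rightarrow$(3), and of uniform achirality is sound and essentially matches the paper. Your twisting criterion $[-1]_*c=c$ is equivalent to the paper's computation $\delta([-1])=-2[X_K]$ in $\mathrm{WC}(E/K)$, combined with period $=$ multisection index. The relation $h\tau_\sigma h^{-1}=\tau_{-\sigma}$, together with the finite index of $\MW(p)$, gives uniform achirality. (The paper gets finite index from a Shioda--Tate rank count rather than from finiteness of the image of $b$, but either works if the latter is granted.)

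The genuine gap is (3)$\Rightarrow$(1), which you leave open. The obstacle you anticipate there comes from misreading $\varphi$. Because $\varphi$ is defined by push-forward of line bundles on $\mathrm{Pic}^0$, it preserves the zero section. So whenever $b(h')=\id$, $\varphi(h')$ lies in $\Aut_{\mathrm{gp}}(E)$ and has no translation part. Moreover, $\MW(p)=\tau(E(K))$ is exactly the \emph{kernel} of $\varphi$, by the exact sequence $1\to E(K)\to\Aut_K(X_K)\to\Aut_{\mathrm{gp}}(E)$; it is not contained in the image. Hence there is no residual translation $t_\tau$ to eliminate, and composing with elements of $\MW$ is meaningless here.

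Once this is corrected, the missing idea is just the finite-kernel property of nonzero endomorphisms of an elliptic curve.
\begin{itemize}
\item Let $m=\ord b(h)$. Since $b(\varphi(h))=b(h)$, $m$ divides $\ord\varphi(h)$. The hypothesis $m\neq\ord\varphi(h)$ therefore gives $\varphi(h^m)\neq\id$.
\item Since $b(h^m)=\id$, $\varphi(h^m)$ is a group automorphism of $E$ over $K$.
\item After replacing $g$ by a power, $g=\tau_s$ with $s\in E(K)$ non-torsion. Conjugating $m$ times and using $h'\circ\tau_s\circ h'^{-1}=\tau_{\varphi(h')\circ s\circ b(h')^{-1}}$ gives $\tau_{\varphi(h^m)(s)}=\tau_{[-1]^m s}$. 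Since $\tau$ is injective, $\varphi(h^m)(s)=[-1]^m s$.
\item The endomorphism $\varphi(h^m)-[-1]^m$ of $E$ therefore kills a point of infinite order. A nonzero endomorphism has finite kernel, so $\varphi(h^m)=[-1]^m$.
\item Since $\varphi(h^m)\neq\id$, $m$ must be odd, and so $\varphi(h^m)=[-1]$.
\end{itemize}
No case analysis over $j=0,1728$ or characteristics $2,3$ is needed. The finite-kernel argument shows uniformly that any automorphism of $E$ sending a non-torsion point to $\pm$ itself is $\pm1$.
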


In the case of K3 surfaces, there are examples in which $\Aut_p(X)$ contains both achiral parabolic automorphisms and chiral parabolic automorphisms (see \S\ref{ss:Examples}).
We obtain the following theorem for Enriques surfaces.

\begin{athm}[\cref{th:para-auto-of-Enriques}]
Let $X$ be an Enriques surface over an algebraically closed field. 
Then $\Aut_p(X)$ is uniformly achiral for each genus-one fibration $p:X\to B$. 
In particular, every parabolic automorphism of $X$ is achiral.  
\end{athm}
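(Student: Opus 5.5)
The plan is to reduce achirality on the Enriques surface to the inversion $[-1]$ on the Jacobian fibration, and to realise that inversion by an automorphism of $X$.

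\emph{Setup.} Let $p:X\to B\cong\PP^1$ be a genus-one fibration on an Enriques surface. In characteristic $\neq 2$, and generically in characteristic $2$, $p$ has exactly two (or one) double fibres and the multisection index is $2$. The key structural input is the following. Let $G_p\subset\Aut_p(X)$ be the subgroup acting trivially on $B$. Then $\varphi$ identifies $G_p$, up to finite groups, with translations by the Mordell--Weil group $\MW(J_p)$, acting on the torsor $X_\eta$ over $J_\eta$ for the generic fibre. Since $\Aut_p(X)$ modulo $G_p$ has finite image in $\Aut(B)$ (it must preserve the discriminant, which has at least three points, or at least the finite set of singular fibres), every element $a\in\Aut_p(X)$ has a power $a^k$ that lies in $G_p$ and acts on $X_\eta$ as a translation $t_\sigma$ composed with an automorphism of finite order. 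Passing to a further power, $a^{m}$ is a pure translation $t_\sigma$ with $\sigma\in\MW$.

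\emph{Constructing $b$.} We want a single $b\in\Aut_p(X)$ acting on $X_\eta$ as $x\mapsto -x+c$ (in torsor language: an automorphism of $X_\eta$ inducing $[-1]$ on $J_\eta$). Then $b t_\sigma b^{-1}=t_{-\sigma}$ for every $\sigma$, which gives uniform achirality with this $b$. For an index-$2$ torsor, a rational point of $\mathrm{Pic}^2(X_\eta)$ exists: it is the class $D$ of the bisection, or equivalently the half-fibre restricted to $\eta$. The map $x\mapsto D-x$, defined via $\mathcal{O}(x+y)\cong D$ on $X_\eta$, is then an automorphism of $X_\eta$ over $k(B)$ inducing $[-1]$ on the Jacobian. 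Since $X$ is the relatively minimal model of its generic fibre, this birational automorphism extends to a biregular $b\in\Aut_p(X)$ with $b(b)=\id_B$ and $\varphi(b)=[-1]$. This is precisely the K3 argument for multisection index $1$ or $2$ (\cref{prop:exsistenceof-1}), so I expect to invoke it directly, noting that Enriques genus-one fibrations always have index $2$ because they carry a bisection.

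\emph{Checking relations.} If $a^m=t_\sigma$, then $ba^mb^{-1}=t_{-\sigma}=a^{-m}$ as automorphisms of $X_\eta$, hence of $X$. If $\sigma$ is torsion, we can choose $m$ so that $a^m=\id$, and the relation holds trivially. Care is needed when $a$ acts nontrivially on $B$: I would first take $m$ with $a^m\in G_p$, and then kill the finite-order fibre automorphism part by using that the automorphism group of $J_\eta$ fixing origin is finite and commutes suitably. A cleaner route is to argue that $a^m$ lands in the translation subgroup, which has finite index in $G_p$. The final statement about parabolic automorphisms follows because each parabolic automorphism preserves some genus-one fibration.

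\emph{Main obstacle.} The main obstacle is characteristic $2$, where Enriques surfaces may be classical, singular, or supersingular, and double fibres may be wild or absent. In that case the existence of a degree-$2$ rational divisor class on $X_\eta$ and the torsor description must be verified separately, for example via a half-fibre or via the fact that the canonical bundle is numerically trivial, giving index dividing $2$. The same applies to the extension of $x\mapsto D-x$ to $X$ in the quasi-elliptic case.
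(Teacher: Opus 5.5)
Your strategy is the paper's. For an elliptic fibration the multisection index is $2$, so \cref{prop:exsistenceof-1} gives $h\in\Aut_p(X)$ with $\varphi(h)=[-1]$. This $h$ conjugates every translation in $\MW(p)$ to its inverse, and the finite index of $\MW(p)$ in $\Aut_p(X)$ finishes the proof. Your explicit involution $x\mapsto D-x$ is the geometric form of the \emph{if} direction of \cref{lem:existenceof-1}; a $K$-point of $\mathbf{Pic}^2_{X_K/K}$, i.e.\ period at most $2$, already suffices. That direction is all the Enriques case needs. The paper's cohomological computation $\delta([-1])=-2[X_K]$ also gives the converse, which is used for K3 surfaces.

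Two steps are not settled by what you wrote. The paper handles both cheaply.

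\emph{Finite index.} You need every $a\in\Aut_p(X)$ to have a power in $\MW(p)$. You give no argument that the discriminant has at least three points. Your fallback, that translations have finite index in $G_p$, says nothing about elements acting with infinite order on $B$. The paper proves the needed statement uniformly in \cref{prop:finite_index}:
\begin{itemize}
\item $\ker\varrho$ is finite;
\item $\varrho(\Aut_p(X))$ fixes the fibre class and preserves the root lattice $R\subset e^\perp/\ZZ e$ spanned by fibre components, so it is virtually abelian of rank at most $\rho(X)-2-\rk R$;
\item by Shioda--Tate this bound is the Mordell--Weil rank, which is the rank of $\MW(p)$.
\end{itemize}
Hence $\MW(p)$ has finite index, and in particular the image of $\Aut_p(X)$ in $\Aut(B)$ is finite.

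\emph{Characteristic $2$.} The ``obstacle'' you flag dissolves in both cases:
\begin{itemize}
\item A quasi-elliptic fibration has Mordell--Weil rank $0$. So $\Aut_p(X)$ is finite and trivially uniformly achiral (\cref{prop:MWrank0}), and no involution is needed; you do not have to extend $x\mapsto D-x$ there.
\item For elliptic fibrations the multisection index is $2$ in every characteristic. This does not depend on whether the double fibres are tame or wild, or how many there are. The reason is that $\Num(X)\cong U\oplus E_8$ is unimodular and the fibre class is twice a primitive isotropic class. So some divisor $D$ has $D\cdot F=2$, and it restricts to a $K$-rational degree-$2$ divisor on $X_K$.
\end{itemize}
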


\subsection{Organization of the paper}

In Section 2, we collect basic material from geometric group theory and algebraic geometry.
In Section 3, we prove the gap theorem for the entropy norm on automorphism groups of algebraic varieties (\cref{Gap_thm: ent_Aut-intro}). 
In the course of the proof, we also establish gap theorems for general stable norms and for $\scl$ on groups admitting acylindrical actions, 
as well as a gap theorem for the stable entropy norm on mapping class groups.
In Section 4, after reviewing some algebraic geometry of genus-one curves, 
we study the achirality of parabolic automorphisms of K3 surfaces and Enriques surfaces using genus-one fibrations and their Jacobian fibrations. 
In addition, in the case of K3 surfaces, we construct several examples concerning achirality by means of lattice theory.

\begin{ack}
The authors would like to thank Simon Brandhorst, Morimichi Kawasaki, Mitsuaki Kimura, Shuhei Maruyama, Shin-ichi Oguni and Hisanori Ohashi for valuable discussions and helpful comments. 

K. K. is indebted to Arend Bayer and the University of Edinburgh for their kind hospitality. 
K. K. is supported by JSPS Overseas Research Fellow and JSPS KAKENHI Grant Number 21K13780. 

A major part of this work was carried out while Y. T. was a visiting researcher at Saarland University. 
He would like to express sincere gratitude to the university and his host, Professor Simon Brandhorst, 
for their warm hospitality and for providing an excellent research environment. 
He is also grateful to Reinder Meinsma for fruitful discussions about elliptic fibrations. 
Y. T. is supported by JSPS KAKENHI Grant Number JP24KJ0044.

T. T. would like to express sincere gratitude to Professor Hisanori Ohashi for his kind hospitality and for providing a stimulating research environment.
T. T. is supported by JSPS KAKENHI Grant Number
JP25KJ0328.

\end{ack}

\section{Preliminaries}

\subsection{Hyperbolic spaces}
We recall hyperbolic spaces here. We refer to 
\cite{Gromov_HyperbolicGroups, Bridson-Haefliger1999} for details. 

Let $\delta>0$. A geodesic triangle in a metric space is \emph{$\delta$-slim}  
if each of its sides is contained in the $\delta$-neighborhood of the union 
of the other two sides. 
A geodesic space $S$ is \emph{$\delta$-hyperbolic} if every geodesic triangle 
in $S$ is $\delta$-slim. A \emph{hyperbolic space} is a $\delta$-hyperbolic space for some $\delta$. 

Let $(S,d)$ be a metric space. 
For a real number $r \geq 0$ and a subset $A\subset S$, 
we write $V(A,r)$ for the $r$-neighborhood $\{x\in S \mid  d(x, A) \leq r \}$ of $A$.  
The \emph{Hausdorff distance} between subsets $A$ and $B$ of $S$ is defined by 
\[ 
d_H(A,B) \coloneq \inf\{ r>0 \mid A \subset V(B,r) \text{ and } B \subset V(A,r) \}. 
\]
For $\lambda\geq 1$ and $\epsilon\geq 0$, a \emph{$(\lambda, \epsilon)$-quasi-geodesic} 
in $S$ is a path $c:I\to S$, where $I$ is an interval of the real line, 
such that 
\[ \frac{1}{\lambda}|t-t'| - \epsilon
\leq d(c(t), c(t'))
\leq \lambda|t-t'| + \epsilon \quad(t,t'\in I). 
\]
A $(\lambda, \epsilon)$-quasi-geodesic is referred to as a quasi-geodesic 
when the specific values of the constants are unnecessary. 
The following theorem is called the Morse lemma and is fundamental.  

\begin{thm}\label{MorseLemma}
Let $\delta >0$, $\lambda\geq 1$, and $\epsilon\geq0$. There exists a constant
$L = L(\delta, \lambda, \epsilon)$ with the following property: 
for any $\delta$-hyperbolic space $S$ and any $(\lambda, \epsilon)$-quasi-geodesics $c$ and $c'$ 
with the same endpoints in $S$, the Hausdorff distance between $\im c$ and $\im c'$ is at most $L$.  
\end{thm}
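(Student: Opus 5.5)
The plan is the standard proof of the Morse lemma (quasi-geodesic stability) in a $\delta$-hyperbolic space, following \cite[III.H.1.7]{Bridson-Haefliger1999}. We argue in two stages. First we show that any $(\lambda,\epsilon)$-quasi-geodesic $c$ lies within a uniform distance of a geodesic $[x,y]$ joining its endpoints. Then we show the geodesic lies near $c$. Since both $c$ and $c'$ are then within uniform Hausdorff distance of the same geodesic $[x,y]$, the triangle inequality for $d_H$ gives the bound $L=2L_0$.

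Step 0 (taming). Replace $c:[a,b]\to S$ by a continuous, piecewise geodesic path $c_0$ with the same endpoints. To build it, sample $c$ at the points of $\{a,b\}\cup(\ZZ\cap[a,b])$ and join consecutive sample points by geodesic segments. The new path $c_0$ is a $(\lambda,\epsilon')$-quasi-geodesic, with $\epsilon'$ depending only on $(\lambda,\epsilon)$, and $d_H(\im c,\im c_0)\le \lambda+\epsilon$. It also satisfies a length bound: for all $t,t'$, the length $\ell(c_0|_{[t,t']})\le k_1 d(c_0(t),c_0(t'))+k_2$, with $k_1,k_2$ depending only on $(\lambda,\epsilon)$. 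This bound is elementary bookkeeping from the two-sided quasi-geodesic inequality.

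Step 1 (the geodesic is near the path). This is the main step and uses hyperbolicity through the following lemma. If $\gamma$ is a rectifiable path from $x$ to $y$ and $p\in[x,y]$, then
\[
d(p,\im\gamma)\le \delta\,|\log_2 \ell(\gamma)|+1.
\]
The proof bisects $\gamma$ repeatedly and uses $\delta$-slimness of the resulting triangles. Let $D=\max_{p\in[x,y]}d(p,\im c_0)$, and pick $p$ attaining it. Choose points $y',z'$ on $[x,y]$ at distance $2D$ from $p$ on either side, or the endpoints if these are closer. Their projections $y'',z''$ to $\im c_0$ satisfy $d(y',y'')\le D$ and $d(z',z'')\le D$. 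Consider the path formed by $[y',y'']$, then $c_0$ between $y''$ and $z''$, then $[z'',z']$. By the tame length bound its length is at most $6D+k_1\cdot 6D+k_2$. The path avoids the open $D$-ball around $p$. Applying the lemma gives
\[
D\le \delta\log_2(6D+6k_1D+k_2)+1,
\]
which forces $D\le D_0(\delta,\lambda,\epsilon)$.

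Step 2 (the path is near the geodesic). Suppose some maximal subpath $c_0([t_1,t_2])$ lies outside $V([x,y],D_0)$. By Step 1 and connectedness of $[x,y]$, some point $w\in[x,y]$ is within $D_0$ of both $c_0([a,t_1])$ and $c_0([t_2,b])$. Hence there are $s\le t_1$ and $s'\ge t_2$ with $d(c_0(s),c_0(s'))\le 2D_0$. The quasi-geodesic inequality then bounds $|s'-s|$, so the excursion has length bounded in terms of $\delta,\lambda,\epsilon$. Therefore $c_0$ stays within $D_0+\lambda(2D_0+\epsilon'+\ldots)$ of $[x,y]$. Undoing Step 0 costs an extra $\lambda+\epsilon$.

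The main obstacle is Step 1. The logarithmic divergence lemma is where $\delta$-hyperbolicity is genuinely used, and the resulting inequality must be arranged so that a linear function of $D$ is dominated by a logarithm. This is exactly why the taming step, with its linear length bound, is needed first.
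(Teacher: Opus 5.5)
Your proposal is correct and follows the same approach as the paper: it reproduces the proof of \cite[Theorem III.H-1.7]{Bridson-Haefliger1999}, which the paper cites in place of a proof (taming, the logarithmic divergence estimate, then bounding excursions away from $[x,y]$), and passing through a common geodesic $[x,y]$ to get $L=2L_0$ correctly yields the two-quasi-geodesic form stated here. The slips are cosmetic: the detour path has length at most $2D+6k_1D+k_2$, so your bound is a harmless overestimate, and the case $\ell(\gamma)<1$ in the $|\log_2|$ estimate is trivial because then $D<1$.
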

\begin{proof}
See \cite[Theorem III.H-1.7]{Bridson-Haefliger1999}. 
\end{proof}

We will refer to the constant $L = L(\delta, \lambda, \epsilon)$ as a \emph{Morse constant}. 

Recall that the \emph{translation length} of an isometry $g$ of $S$ is defined by 
$\tau(g) \coloneq \lim_{n\to \infty}\frac{1}{n}d(x, g^nx)$, where $x\in S$. 
This limit exists by Fekete's lemma and equals $\inf_{n> 0} \frac{1}{n}d(x, g^n x)$. 
Furthermore, it does not depend on the choice of $x\in S$. 
We say that $g$ is 
\begin{itemize}
\item \emph{elliptic} if an orbit $\{ g^nx \mid n\in \ZZ \}$ is bounded. 
\item \emph{parabolic} if an orbit $\{ g^nx \mid n\in \ZZ \}$ is unbounded, and 
the translation length $\tau(g)$ is zero. 
\item \emph{loxodromic} if the translation length $\tau(g)$ is positive. 
\end{itemize}

\subsection{Norms and quasimorphisms}
Let $\Gamma$ be a group. The identity element of $\Gamma$ is denoted by $1$. 
A \emph{norm} of $\Gamma$ is a function $\nu:\Gamma\to \RR_{\geq0}$ with the 
following properties: 
\begin{enumerate}
\item $\nu(g) = 0$ if and only if $g = 1$,
\item $\nu(gh) \leq \nu(g) + \nu(h)$ for any $g,h\in \Gamma$, 
\item $\nu(g^{-1}) = \nu(g)$ for any $g\in \Gamma$. 
\end{enumerate}
For a norm $\nu$, the \emph{stabilization} $\overline{\nu}:\Gamma\to \RR_{\ge0}$ of $\nu$ is 
the function defined by 
\[ \overline{\nu}(g) \coloneqq \lim_{n\to\infty} \frac{\nu(g^n)}{n} \quad(g\in \Gamma).  \] 
Note that $\overline{\nu}$ is not a norm in general, 
but we call $\overline{\nu}$ a \emph{stable norm}. 
One sees that a stable norm is invariant under conjugation. 

A subset $A$ of $\Gamma$ is said to be \emph{symmetric} if $a^{-1} \in A$ for any $a\in A$. 
Let $A$ be a symmetric subset of $\Gamma$. 
Then, we can define a norm $\nu_A:\langle A\rangle\to \ZZ_{\geq0}$ by 
\[ \nu_A(g) \coloneqq (\text{the least number of elements of $A$ whose product is equal to $g$}). \]
The norm $\nu_A$ is called the \emph{word norm associated with $A$}.  
For later use, we naturally extend the stable norm $\overline{\nu_A}$ to 
\[
\overline{\nu_A}:\Gamma\to \RR_{\geq0}\cup\{\infty\}
\]
as follows: 
if $g^m\in\langle A\rangle$ for some positive integer $m$, define
$\overline{\nu_A}(g) \coloneq \overline{\nu_A}(g^m)/m$, 
and if no power of $g$ is contained in $\langle A \rangle$, 
define $\overline{\nu_A}(g) \coloneq \infty$. 

We see that a word norm can be estimated from below by using a quasimorphism, 
which is defined below. 

\begin{dfn}
A function $\psi: \Gamma\to \RR$ is called a \emph{quasimorphism} if 
there exists a real number $D\geq 0$ such that 
$|\psi(gh) - \psi(g) - \psi(h)| \leq D$ for any $g,h\in \Gamma$. 
Let $\psi$ be a quasimorphism. We call the number 
\[ D_\psi\coloneqq\inf \{D\geq 0 \mid |\psi(gh) - \psi(g) - \psi(h)| \leq D 
\text{ for all $g,h\in \Gamma$} \} 
\]
the \emph{defect} of $\psi$. 
We say that $\psi$ is \emph{homogeneous} if 
$\psi(g^n) = n\psi(g)$ for any $g$ and $n\in\ZZ$. 
The \emph{homogenization} of $\psi$ is the function $\overline{\psi}:\Gamma\to \RR$ 
defined by 
\[ \overline{\psi}(g) \coloneqq \lim_{n \to \infty} \frac{\psi(g^n)}{n} \quad (g\in \Gamma).  \]
The limit exists, and $\overline{\psi}$ is a homogeneous quasimorphism.
\end{dfn}

\begin{prop}\label{basic-ineq}
Let $\Gamma$ be a group and $A$ a symmetric subset of $\Gamma$. 
Let $\psi:\Gamma\to \RR$ be a homogeneous quasimorphism with defect $D_\psi$. 
Suppose that there exists a constant $C > 0$ such that 
$\sup\{ |\psi(a)| \mid a\in A \} \leq C$. 
Then 
\[ \overline{\nu_A}(g) \geq \frac{|\psi(g)|}{D_\psi + C} \]
for any $g\in \Gamma$. 
\end{prop}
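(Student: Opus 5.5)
The plan is the standard Bavard-type estimate: bound $|\psi|$ along a word in $A$, apply it to powers of $g$, and pass to the limit.

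First I dispose of the trivial case. If no power of $g$ lies in $\langle A\rangle$, then $\overline{\nu_A}(g)=\infty$ and there is nothing to prove. Otherwise choose $m>0$ with $g^m\in\langle A\rangle$. Homogeneity gives $\psi(g^m)=m\psi(g)$, and by definition $\overline{\nu_A}(g)=\overline{\nu_A}(g^m)/m$. So both sides of the inequality scale by $1/m$, and it suffices to treat the case $g\in\langle A\rangle$.

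The key step is the following claim: for every $h\in\langle A\rangle$,
\[ |\psi(h)|\le (D_\psi+C)\,\nu_A(h). \]
To prove it, write $h=a_1\cdots a_k$ with $a_i\in A$ and $k=\nu_A(h)$. Iterating the quasimorphism inequality $k-1$ times gives
\[ \Bigl|\psi(h)-\sum_{i=1}^k\psi(a_i)\Bigr|\le (k-1)D_\psi. \]
Since each $|\psi(a_i)|\le C$, this yields $|\psi(h)|\le kC+(k-1)D_\psi\le k(D_\psi+C)$.

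Two small points need attention. The case $h=1$, with $k=0$, is fine because $\psi(1)=0$ by homogeneity. Also, the defect is an infimum, so the inequality should be applied with an arbitrary $D>D_\psi$ and then $D\downarrow D_\psi$; alternatively, note that the infimum is attained because the set of admissible $D$ is closed.

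Finally, apply the claim to $h=g^n$. Homogeneity gives $n|\psi(g)|=|\psi(g^n)|\le (D_\psi+C)\nu_A(g^n)$. Dividing by $n$ and letting $n\to\infty$ gives
\[ |\psi(g)|\le (D_\psi+C)\,\overline{\nu_A}(g). \]
Since $C>0$, we have $D_\psi+C>0$, so we may divide to obtain the statement. The limit defining $\overline{\nu_A}(g)$ exists by Fekete's lemma, since $\nu_A$ is subadditive.

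There is no real obstacle here. The only care needed is the bookkeeping of the $k-1$ versus $k$ defect terms and the reduction from $g$ to $g^m$.
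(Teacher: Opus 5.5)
Your proposal is correct and follows the same approach as the paper, which only says ``easy computations'' and cites Kotschick: bound $|\psi(h)|\le (D_\psi+C)\,\nu_A(h)$ on $\langle A\rangle$, apply this to $g^n$ using homogeneity, and let $n\to\infty$. Your treatment of the $\overline{\nu_A}(g)=\infty$ case and the reduction from $g$ to $g^m$ matches the paper's extended definition of $\overline{\nu_A}$.
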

\begin{proof}
This is a consequence of easy computations; see \cite[Section 1]{Kot04}. 
\end{proof}

\subsection{Acylindrical actions and WPD elements}
\begin{dfn}[\cite{Bowditch2008, Osin}]
The action of a group $\Gamma$ on a metric space $(S,d)$ is said to be
\emph{acylindrical} if for every $r > 0$ there exist $R, N > 0$ such that 
for every two points $x, x' \in S$ with $d(x,x')\geq R$, 
\[ \# \{ g\in \Gamma \mid \text{$d(x,gx) \leq r$ and $d(x,gx') \leq r$} \}\leq N. \]
\end{dfn}
The following theorem is known. 

\begin{thm}[{\cite[Lemma 2.2]{Bowditch2008}, \cite[Lemma 6.28]{DGO17}}]\label{th:t_0-is-positive}
Let $\Gamma$ be a group acting acylindrically on a $\delta$-hyperbolic space. 
Then each element of $\Gamma$ is either elliptic or loxodromic. 
Moreover, there exists $\eta>0$ depending only on $\delta$ and the parameters of acylindricity 
such that $\tau(g) > \eta$ for any loxodromic element $g\in \Gamma$. 
\end{thm}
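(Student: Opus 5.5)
The theorem contains two assertions. The first is the dichotomy: every element is either elliptic or loxodromic, so none is parabolic. The second is a uniform lower bound on translation lengths of loxodromic elements. I would prove both by contradiction from acylindricity, using the thin-triangle geometry of the $\delta$-hyperbolic space $S$ and the Morse lemma (\cref{MorseLemma}). Fix $r$ of order a suitable multiple of $\delta$, say $r=100\delta$, and let $R=R(r)$, $N=N(r)$ be the acylindricity constants. Throughout, the aim is to find, inside a single element's cyclic group, more than $N$ elements that move two far-apart points by at most $r$.

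For the dichotomy, suppose $g$ is not elliptic, so some orbit is unbounded, and suppose also that $\tau(g)=0$; I will show this is impossible. Take a basepoint $x$. Since $\tau(g)=\inf_n d(x,g^nx)/n=0$, the displacements $d(x,g^nx)$ grow sublinearly. Combined with unboundedness of the orbit, a standard hyperbolic-geometry argument shows the following. For any $M$ there is a point $y$ (a ``quasi-center'' of a long stretch of the orbit) and a point $y'$ with $d(y,y')\ge R$ such that the $N+1$ elements $g^k$, $k=1,\dots,N+1$, all move both $y$ and $y'$ by at most $r$. Concretely, choose $n$ large with $d(x,g^nx)\gg R$. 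Let $y,y'$ be points on a geodesic $[x,g^nx]$ near its beginning, separated by $R$. Small powers $g^k$ map $[x,g^nx]$ to $[g^kx,g^{n+k}x]$, which fellow-travels $[x,g^nx]$ along the middle once $d(x,g^kx)$ is small relative to the length. The point is that sublinear growth gives $k\le N+1$ with displacement small compared to the segment, and thinness of quadrilaterals then forces $d(y,g^ky)$ and $d(y',g^ky')$ to be at most $r$. To make this clean, I would use Bowditch's approach: replace $g$ by a suitable bounded power and compare with horoball-like behaviour. Because the $g^k$ are distinct (they have unbounded orbit, hence infinite order), this contradicts acylindricity. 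Hence $\tau(g)>0$, and $g$ is loxodromic.

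For the uniform bound, let $g$ be loxodromic with small $\tau(g)$. Since $g$ has a quasi-axis, we have $d(x,g^kx)\approx k\tau(g)+O(\delta)$ for $x$ on that axis, with an $O(\delta)$ error independent of $g$ (via the Morse lemma applied to the $g$-invariant quasi-geodesic). Pick $x,x'$ on the axis at distance $\ge R$. For $1\le k\le N+1$, $g^k$ translates along the axis by about $k\tau(g)$. It therefore moves both $x$ and $x'$ by at most $k\tau(g)+C\delta$, and this is $\le r$ provided $\tau(g)\le (r-C\delta)/(N+1)$. These $N+1$ distinct elements contradict acylindricity, so we may take $\eta=(r-C\delta)/(2(N+1))$, which depends only on $\delta$ and the acylindricity parameters.

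The main obstacle is controlling the additive error uniformly in $g$. The quasi-axis constants of an element with tiny $\tau(g)$ degenerate: the axis is only a $(\lambda,\epsilon)$-quasi-geodesic with $\lambda\sim 1/\tau$. I would first handle this by passing to a power $g^m$ with $m\tau(g)$ around $10\delta$ (so that $\tau$ stays small in absolute terms). I would then build the axis from the orbit of $g^m$, with constants depending only on $\delta$, and apply the counting argument to $g^{mk}$. The same device, together with the references \cite[Lemma 2.2]{Bowditch2008} and \cite[Lemma 6.28]{DGO17}, also handles the parabolic case.
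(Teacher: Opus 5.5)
The paper does not prove this statement; it simply quotes \cite[Lemma 2.2]{Bowditch2008} and \cite[Lemma 6.28]{DGO17}. Your closing sentence falls back on the same references, which is fine. The self-contained arguments you sketch in their place, however, have genuine gaps in both halves.

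\emph{No parabolic elements.} The inference ``$g^k[x,g^nx]$ fellow-travels $[x,g^nx]$, so thin quadrilaterals force $d(y,g^ky)\le r$'' is not valid. Thinness only puts $y$ near \emph{some} point of $g^k[x,g^nx]$, not near $g^ky$. If you track positions with Gromov products, the honest estimate for $y\in[x,g^nx]$, with $d(x,y)$ and $d(y,g^nx)$ both large compared with $d(x,g^kx)$, is
\[ d(y,g^ky)\le \bigl|d(x,g^{n+k}x)-d(x,g^nx)\bigr|+O(\delta), \]
and nothing in your sketch controls the right-hand side. You use $\tau(g)=0$ only to say that $d(x,g^kx)$, $k\le N+1$, is small compared with the segment. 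That holds for every element with unbounded orbits once $n$ is large. Run verbatim on a loxodromic $g$ with $\tau(g)>r$, your argument would reach the same conclusion, although then $d(y,g^ky)\ge\tau(g^k)>r$ for every $y$. The missing input is the quasi-invariance of horofunctions under a parabolic. Let $\xi$ be the limit point of $g$ at infinity. Then $k\mapsto\beta_\xi(g^kx)-\beta_\xi(x)$ is a quasimorphism on $\ZZ$ of defect $O(\delta)$, and its homogenization vanishes because it is bounded by $d(x,g^kx)+O(\delta)=o(k)$. Hence it is itself bounded by $O(\delta)$. This is what gives $|d(x,g^{n+k}x)-d(x,g^nx)|=O(\delta)$ for $n\gg0$ and $|k|\le N+1$. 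Only then are two points of $[x,g^nx]$ at distance $R$ apart, far from both ends, moved by $O(\delta)\le r$ by each of $g,\dots,g^{N+1}$. Your appeal to ``horoball-like behaviour'' names this step without supplying it.

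\emph{Uniform bound.} Your skeleton is the right one: fix $r$, get $R,N$, and show that $g,\dots,g^{N+1}$ move two far-apart points by at most $(N+1)\tau(g)+C$. But the estimate $d(z,g^kz)\le k\tau(g)+C$ with $C$ independent of $g$ is the whole difficulty, and your repair does not deliver it.

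Counting the powers $g^{mk}$, $k\le N+1$, cannot work. By construction $\tau(g^m)\approx10\delta$, so the smallness of $\tau(g)$ never enters, and the same count would rule out every loxodromic element of translation length about $10\delta$. What must be counted are the small powers $g^j$, $1\le j\le m$, whose number grows like $1/\tau(g)$, measured against a quasi-axis of $h=g^m$. The claim that they ``translate along the axis'' also needs proof, since that axis is only $h$-invariant (compare screw motions of $\HH^3$).

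One way to do this:
\begin{enumerate}
\item Put $T_0=\max\{T,2\}$ with $T$ from \cref{lem:TandK}; note that $10\delta$ is in general below this threshold. Assume $\tau(g)<T_0$ and let $m$ be minimal with $m\tau(g)\ge T_0$. Then $T_0\le\tau(h)<2T_0$, and $h$ has the $(2,2K)$-quasi-axis $\ell$ of \cref{lem:l_g}.
\item Since $g^j$ commutes with $h$, $g^j\ell$ is again an $h$-invariant $(2,2K)$-quasi-geodesic with the same endpoints. So $d_H(\ell,g^j\ell)$ is bounded in terms of $\delta$ by the Morse lemma.
\item Since $g^j$ fixes both endpoints and $d(h^iz,g^jh^iz)=d(z,g^jz)$, it coarsely translates $\ell$ towards one end by a nearly constant amount $D_j$.
\item Iterating, using $d(a,c)\ge d(a,b)+d(b,c)-2L$ for $a,b,c$ in this order on $\ell$, gives $D_j\le\tau(g^j)+C(\delta)<2T_0+C(\delta)$.
\item Take $r=2T_0+C'(\delta)$ and $z,z'\in\ell$ with $d(z,z')\ge R(r)$. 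Acylindricity gives $m\le N(r)$, hence $\tau(g)\ge T_0/N(r)$.
\end{enumerate}
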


\begin{dfn}[\cite{BF02}]\label{def: WPD}
Let $\Gamma$ be a group acting on a hyperbolic space $(S,d)$. 
An element $g\in\Gamma$ satisfies the \emph{weak proper discontinuity} condition,
or $g$ is a \emph{WPD element}, 
if for every $x\in S$ and every $r>0$, there exists $n\in\ZZ_{>0}$ such that
the set
\[ \{a\in\Gamma \mid \text{$d(x,ax)\leq r$ and $d(g^nx,ag^nx) \leq r$}\} \]
is finite. 
\end{dfn}

In the setting of \cref{def: WPD}, every element of $\Gamma$ is WPD 
if the action is proper. If the action is acylindrical, then 
every loxodromic element of $\Gamma$ is WPD. 


\subsection{Hyperbolic spaces associated with surfaces}\label{ss:surfaces}
We recall fundamental facts on algebraic surfaces, 
especially K3 surfaces and Enriques surfaces. 
We refer to \cite{Cossec-Dolgachev-Liedtke_EnriquesI} for details. 

Let $k$ be an algebraically closed field, 
and let $X$ be a smooth projective surface over $k$. 
The group $\Num(X)$ of divisor classes modulo numerically equivalence 
is a finitely generated free abelian group. 
The rank of $\Num(X)$ is called the \emph{Picard number} of $X$ and is denoted 
by $\rho(X)$. The intersection form $(\cdot, \cdot):\Num(X)\times \Num(X)\to \ZZ$
endows $\Num(X)$ with the structure of a lattice of signature $(1, \rho(X)-1)$. 

Every automorphism $g$ of $X$ acts isometrically on $\Num(X)$ via the push-forward $g_*$. 
We write $\varrho:\Aut(X)\to \mathrm{O}(\Num(X))$ for this representation. 

We say that $X$ is a \emph{K3 surface} if $\Omega_{X/k}^2 \cong \O_X$ and $H^1(X, \O_X) = 0$, 
and that $X$ is an \emph{Enriques surface} if 
its Kodaira dimension is zero and its second Betti number is $10$. 

\begin{prop}\label{prop:finite_kernel}
Suppose that $X$ is a K3 surface or an Enriques surface. 
Then, the kernel of $\varrho:\Aut(X)\to \mathrm{O}(\Num(X))$ is finite. 
\end{prop}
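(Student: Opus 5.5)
The plan is the classical argument: an automorphism acting trivially on $\Num(X)$ preserves an ample class, and the automorphism group of a polarized surface of this kind is finite.

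\emph{Step 1.} Fix an ample divisor $H$ on $X$. Every $g\in\ker\varrho$ satisfies $g_*[H]=[H]$ in $\Num(X)$. We want this to upgrade to $g^*\O_X(H)\cong\O_X(H)$, at least after passing to a finite-index subgroup.

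For a K3 surface this is immediate. Since $H^1(X,\O_X)=0$, the Picard scheme is discrete and reduced. Moreover, numerical and linear equivalence coincide on a K3 surface: $\Pic(X)$ is torsion-free, because a torsion line bundle $L\neq\O_X$ would have $\chi(L)=2$ by Riemann--Roch, forcing $h^0(L)>0$ or $h^0(L^{-1})>0$, which is absurd. Hence $\Pic(X)=\Num(X)$.

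For an Enriques surface, $\Num(X)=\Pic(X)/(\text{torsion})$, and the torsion is at most $\ZZ/2$ in the classical case; in characteristic $2$ the Picard scheme may be non-reduced, with torsion $\mu_2$ or $\alpha_2$. In every case the classes numerically equivalent to $H$ form a finite set, acted on by $\ker\varrho$. So a subgroup of index at most $2$ fixes the isomorphism class of $\O_X(H)$.

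\emph{Step 2.} Replace $H$ by a very ample multiple $mH$. The subgroup $K$ fixing $[\O_X(mH)]$ then acts linearly on $H^0(X,\O_X(mH))$, up to scalars, compatibly with the embedding $X\hookrightarrow\PP^N$. So $K$ embeds into the stabilizer of $X$ in $\mathrm{PGL}_{N+1}(k)$, which is a linear algebraic group.

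\emph{Step 3.} It remains to show this stabilizer $G$ is finite, which is where the main obstacle lies. Its identity component $G^0$ acts on $X$. The Lie algebra of $\Aut^0(X)$, and in fact the tangent space to the automorphism group scheme, is $H^0(X,T_X)$. For a K3 surface, $T_X\cong\Omega_X$ because $\omega_X\cong\O_X$, so $H^0(X,T_X)\cong H^0(X,\Omega^1_X)=0$; in positive characteristic this vanishing is the theorem of Rudakov--Shafarevich. For Enriques surfaces the same vanishing holds classically. In characteristic $2$, $H^0(X,T_X)$ can be nonzero, but the reduced group $\Aut^0(X)_{\mathrm{red}}$ is still trivial: a nontrivial connected algebraic group acting faithfully would make $X$ ruled or force $\chi(\O_X)=0$, contradicting $c_2=12$ and $\chi(\O_X)=1$.

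Thus $G(k)$ is a finite group, so $K$ is finite and $\ker\varrho$ is finite. If the handling of characteristic $2$ in Step 3 proves delicate, I would cite \cite{Cossec-Dolgachev-Liedtke_EnriquesI} directly for the vanishing of $\Aut^0$, since that is the only non-formal input.
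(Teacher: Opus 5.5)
Your proof is correct, but it does more than the paper does. The paper's proof is a one-line citation of \cite[Proposition 8.2.1]{Dolgachev-Kondo_EnriquesII}. You instead reconstruct the standard argument behind such results:
\begin{enumerate}
\item the kernel fixes an ample class, hence, up to the finite ambiguity coming from $\Pic^\tau(X)$, a polarization;
\item the stabilizer of a polarization is a linear algebraic group;
\item its identity component is trivial because these surfaces have no vector fields, or in characteristic $2$ because $\Aut^0(X)_{\mathrm{red}}$ is trivial.
\end{enumerate}
Your route makes visible that the only non-formal input is the triviality of $\Aut^0(X)_{\mathrm{red}}$, and that characteristic $2$ enters only there. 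The citation buys a careful treatment of exactly that point. Since the proposition is used later over arbitrary algebraically closed fields, including Enriques surfaces in characteristic $2$, that point does matter.

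Three small remarks.
\begin{enumerate}
\item You can skip the index-$2$ subgroup in Step 1. The torsion of $\Pic(X)$ is killed by $2$, so every element of $\ker\varrho$ already fixes $\O_X(2H)$.
\item In the characteristic-$2$ part of Step 3, the phrase ``would make $X$ ruled'' needs the group action itself, not just the rational orbits. In positive characteristic, K3 and Enriques surfaces can be unirational, so being covered by rational curves proves nothing. The correct justification runs through the action. A nontrivial $\mathbb{G}_a$- or $\mathbb{G}_m$-action gives a rational quotient $X\dashrightarrow C$. Its generic fiber contains a dense orbit that is a torsor under a quotient of $\mathbb{G}_a$ or $\mathbb{G}_m$, and such a quotient is again $\mathbb{G}_a$ or $\mathbb{G}_m$. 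Because $H^1$ of these groups over a field vanishes, the torsor is trivial, so $X$ is birational to $C\times\PP^1$.
\item Your $G$ sits inside $\mathrm{PGL}_{N+1}$, so its identity component is linear. The abelian-variety alternative, and with it the $\chi(\O_X)=0$ versus $c_2=12$ argument, is therefore not needed.
\end{enumerate}
Falling back on \cite{Cossec-Dolgachev-Liedtke_EnriquesI} for the triviality of $\Aut^0$ in characteristic $2$, as you propose, is perfectly adequate.
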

\begin{proof}
See \cite[Proposition 8.2.1]{Dolgachev-Kondo_EnriquesII}. 
\end{proof}

In the following, we write $L = \Num(X)$ and $\rho = \rho(X)$. 
Let $V \coloneq L\otimes \RR$. The intersection form extends linearly to $V$. 
The set $\{ v\in V \mid (v,v)>0 \}$ has two connected components. 
The one that contains an ample class is called the \emph{positive cone} and denoted by $\C$.
Then, the hypersurface
\[ \HH_X \coloneq \{ v\in \C \mid (v,v) = 1 \} \]
endowed with the metric $d$ determined by $\cosh d(u,v) = (u,v)$ is a hyperbolic space. 
The automorphism group $\Aut(X)$ of $X$ acts on $\HH_X$ by isometries via push-forward. 

\begin{dfn}
Let $g$ be an automorphism of $X$. We say that 
$g$ is \emph{elliptic}, \emph{parabolic}, or \emph{loxodromic} if the induced isometry
$g_*:\HH_X \to \HH_X$ is elliptic, parabolic, or loxodromic, respectively. 
\end{dfn}

Let $g$ be an automorphism of $X$. The \emph{homological entropy}, 
or simply the entropy, of $g$ is defined to be 
$\log (\lambda(g_*))$, where $\lambda(g_*)$ denotes 
the spectral radius of the linear transformation $g_*:V\to V$. 
By the results of Gromov \cite{Gromov_Entropy} and Yomdin \cite{Yomdin1987}, 
if $k = \CC$, then the homological entropy of $g$ 
coincides with the topological entropy of $g$ with respect to the Euclidean topology on $X$. 

\begin{lem}\label{lem:tl_equals_ent}
Let $g$ be an automorphism of $X$. Then, the entropy of $g$  
is equal to the translation length $\tau(g_*:\HH_X\to \HH_X)$.  
\end{lem}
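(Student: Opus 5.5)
Write $\varphi := g_*$, an isometry of the real Lorentzian space $V$ preserving $L = \Num(X)$ and the positive cone $\C$. The plan is to compute both quantities from the classification of isometries of $\HH_X$: elliptic, parabolic, loxodromic.

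\textbf{Step 1: the linear algebra of $\varphi$.} Since $\varphi$ preserves the lattice $L$, its characteristic polynomial has integer coefficients and is monic. Since $\varphi$ preserves a form of signature $(1,\rho-1)$ and maps $\C$ to itself, the standard description of $\mathrm{O}^+(1,\rho-1)$ applies. There are two possibilities.
\begin{itemize}
\item If $\lambda(\varphi) > 1$, there is a real eigenvalue $\lambda = \lambda(\varphi) > 1$ with an isotropic eigenvector $v_+ \in \overline{\C}$. There is also an eigenvalue $\lambda^{-1}$ with isotropic eigenvector $v_-$, and all other eigenvalues have modulus $1$. Indeed, the orthogonal complement of $\langle v_+, v_-\rangle$ is negative definite, so $\varphi$ acts on it with all eigenvalues of modulus $1$.
\item Otherwise all eigenvalues have modulus $1$, so $\lambda(\varphi) = 1$ and the entropy is $0$.
\end{itemize}

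\textbf{Step 2: the loxodromic case.} Suppose $\lambda > 1$. Normalize so that $(v_+, v_-) = 1$ and choose $x = (v_+ + v_-)/\sqrt{2} \in \HH_X$. Then
\[
(x, \varphi^n x) = \tfrac{1}{2}(\lambda^n + \lambda^{-n}) = \cosh(n \log\lambda),
\]
so $d(x, \varphi^n x) = n\log\lambda$. Dividing by $n$ and using that $\tau$ is independent of the base point gives $\tau(\varphi) = \log \lambda$.

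\textbf{Step 3: the case of eigenvalues of modulus $1$.} Here we must show $\tau(\varphi) = 0$. For any $x \in \HH_X$,
\[
\cosh d(x, \varphi^n x) = (x, \varphi^n x) \le \|x\|\,\|\varphi^n\|\,\|x\|
\]
for any fixed Euclidean norm on $V$. Because all eigenvalues of $\varphi$ have modulus $1$, the Jordan normal form shows that $\|\varphi^n\|$ grows at most polynomially in $n$. Hence $d(x, \varphi^n x) = O(\log n)$, so $\tau(\varphi) = 0 = \log \lambda(\varphi)$.

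\textbf{Main obstacle.} The delicate point is Step 1: the dichotomy that every eigenvalue other than $\lambda^{\pm 1}$ has modulus $1$, and that $\lambda(\varphi)$ is attained on the boundary of the positive cone. To prove it I would use the Perron--Frobenius theorem for cone-preserving maps (Birkhoff's version). It gives an eigenvector $v_+ \in \overline{\C}$ with eigenvalue $\lambda(\varphi)$. If $\lambda(\varphi) > 1$, then $(v_+, v_+) = (\varphi v_+, \varphi v_+) = \lambda^2 (v_+, v_+)$ forces $(v_+, v_+) = 0$, so $v_+$ is isotropic. Applying the same argument to $\varphi^{-1}$ produces $v_-$. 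The remaining eigenvalues have modulus $1$ because $\varphi$ acts on the negative definite complement $\langle v_+, v_-\rangle^\perp$, where it preserves a definite form.

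No input specific to K3 or Enriques surfaces is needed; the statement is pure hyperbolic linear algebra.
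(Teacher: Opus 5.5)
Your proof is correct, but it takes a different route from the paper. The paper proves the lemma in one line by citing \cite[Exercise 4.7-26]{Rat}, a standard fact that rests on the classification of Lorentz transformations: a loxodromic isometry has an axis whose endpoints are isotropic eigenlines with eigenvalues $e^{\pm\tau}$, and elliptic and parabolic ones have all eigenvalues on the unit circle.

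You reverse that logic. Instead of starting from the geometric trichotomy, you obtain the isotropic eigenvectors $v_\pm$ from Birkhoff's Perron--Frobenius theorem for the proper cone $\overline{\C}$. This in effect constructs the axis $\HH_X\cap\langle v_+,v_-\rangle$ and reduces $\tau=\log\lambda$ to a two-line computation. You then handle the case $\lambda(\varphi)=1$ by the polynomial growth of $\|\varphi^n\|$. The citation buys brevity; your argument is self-contained and uses only that $g_*$ is an isometry of a Lorentzian space preserving the positive cone.

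Two steps need an explicit line. First, applying Birkhoff to $\varphi^{-1}$ gives the eigenvalue $\lambda^{-1}$ only once you note $\lambda(\varphi^{-1})=\lambda(\varphi)$. This holds because $\varphi^{-1}=G^{-1}\varphi^{T}G$ for the Gram matrix $G$. Second, the normalization $(v_+,v_-)=1$ uses that non-proportional vectors of $\overline{\C}$ pair positively; $v_\pm$ are non-proportional since their eigenvalues differ. The bound in Step 3 should also carry a harmless factor $\|G\|$.

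Finally, your claim that the remaining eigenvalues have modulus $1$ is never used. Step 2 needs only $v_\pm$, and Step 3 needs only $\lambda(\varphi)\le 1$. In fact, the Step 3 estimate combined with Gelfand's formula gives $\tau(\varphi)\le\log\lambda(\varphi)$ in general.
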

\begin{proof}
This follows from \cite[Exercise 4.7-26]{Rat}. 
\end{proof}

A proper and surjective morphism $p: X\to B$ of varieties over $k$ with $p_*\O_X = \O_B$
is called a \emph{fibration}. 
We say that an automorphism $g$ of $X$ \emph{preserves} a fibration $p:X\to B$ if  
there exists $b(g):B\to B$ such that $b(g)\circ p = p \circ g$. 
Let $p:X\to B$ be a fibration from a smooth surface $X$ to a smooth curve $B$ over $k$. 
Let $K$ be the function field of $B$, and let $X_K$ denote the generic fiber of $p$. 
Then, $X_K$ is a geometrically integral and regular curve over $K$ 
(\cite[Theorem 4.1.1]{Cossec-Dolgachev-Liedtke_EnriquesI}). 
We say that $p$ is a \emph{genus-one fibration} if 
the \emph{arithmetic genus} $p_a(X_K) = \dim_K H^1(X_K, \O_{X_K})$ is equal to $1$. 
A genus-one fibration $p:X\to B$ is called an \emph{elliptic fibration} if 
$X_K$ is smooth, and a \emph{quasi-elliptic fibration} otherwise. 
The following proposition is known. 

\begin{prop}\label{prop:trichotomy}
Let $X$ be a K3 surface or an Enriques surface over $k$, and 
let $g$  be an automorphism of $X$. 
\begin{enumerate}
\item $g$ is elliptic if and only if $g$ is of finite order. 
\item $g$ is parabolic if and only if $g$ is of infinite order and preserves a genus-one fibration. 
\item $g$ is loxodromic if and only if the entropy of $g$ is positive. 
\end{enumerate}
\end{prop}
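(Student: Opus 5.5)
We prove the three items through the classification of isometries of the hyperboloid model $\HH_X$ together with the linear algebra of $g_*$ on $V$. By \cref{lem:tl_equals_ent}, the entropy of $g$ equals $\tau(g_*)$. Item (3) is therefore immediate from the definition of loxodromic (positive translation length). It remains to separate the elliptic and parabolic cases among isometries of zero translation length.

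For (1), suppose $g$ has finite order. Then every orbit of $g_*$ is finite, hence bounded, so $g$ is elliptic. Conversely, suppose $g$ is elliptic. Then $g_*$ preserves $L=\Num(X)$, so $\varrho(\langle g\rangle)$ is a discrete subgroup of $\mathrm{O}(V)$. It has a bounded orbit in $\HH_X$, and the stabilizers of points of $\HH_X$ in $\mathrm{O}(V)$ are compact. Hence $\varrho(\langle g\rangle)$ is finite: concretely, $g_*$ fixes the circumcenter $h\in\HH_X$ of a bounded orbit, so it preserves the positive definite lattice-intersecting form on $h^\perp$, and $g_*$ lies in a compact group intersected with the discrete group $\mathrm{O}(L)$. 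Since $\ker\varrho$ is finite by \cref{prop:finite_kernel}, $g$ has finite order.

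For (2), suppose first that $g$ is parabolic. Then $g$ has infinite order by (1). A standard fact about parabolic isometries of hyperbolic space is that $g_*$ fixes a unique point of $\partial\HH_X$, that is, an isotropic ray $\RR_{\ge0}v$ in $\overline{\C}$. Since $g_*$ has zero entropy and is an integral matrix, all its eigenvalues are roots of unity (Kronecker). The fixed isotropic line is the eigenline for eigenvalue $1$ in a unipotent Jordan block of size $3$, so it is defined over $\QQ$. We may therefore take $v\in L$ primitive, isotropic, nef (after checking that it lies in the closure of the nef cone, which holds because $g_*^n h/\|g_*^nh\|\to v$ for an ample $h$), and $g$-invariant. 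The main obstacle is the next step: the geometric input that a primitive nef isotropic class on a K3 or Enriques surface is, up to a multiple, the class of a fiber of a genus-one fibration $p:X\to B$. This holds in all characteristics by Riemann--Roch and the Cossec--Dolgachev--Liedtke theory; for Enriques surfaces one takes the half-fiber, $2v$. Since $g_*$ fixes this class, $g$ maps fibers to fibers and hence preserves $p$.

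Conversely, suppose $g$ has infinite order and preserves a genus-one fibration with fiber class $f$. Then $g_*f=f$, so $g_*$ fixes a point of $\partial\HH_X$. We must rule out loxodromic behavior. A loxodromic $g_*$ fixes exactly two boundary points, namely the eigenlines for $\lambda^{\pm1}$ with $\lambda>1$ irrational or at least not equal to $1$. These eigenlines cannot be rational isotropic lines fixed with eigenvalue $1$, since eigenvalue $1$ would be required. Hence $g$ is not loxodromic, so $\tau(g_*)=0$. By (1), $g$ is not elliptic, so it is parabolic.
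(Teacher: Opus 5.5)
Your proposal is correct and follows essentially the paper's route: (i) from discreteness of $\mathrm{O}(L)$ together with the finite kernel of $\varrho$, (iii) from entropy $=$ translation length, and (ii) by identifying a $g_*$-fixed primitive nef isotropic class in $L$ with the (half-)fiber class of a genus-one fibration, and conversely by using the fixed fiber class to rule out loxodromic behaviour. The only difference is that you prove by hand what the paper cites from Ratcliffe (Exercise 4.7-18) and Cossec--Dolgachev--Liedtke (Proposition 2.2.1): you use Kronecker's theorem and the Jordan structure of a parabolic for rationality of the fixed line, limits of iterated ample classes for nefness, and the eigenvalues at the two fixed boundary points of a loxodromic for the converse.
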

\begin{proof}
Claims (i) and (iii) follow from \cref{prop:finite_kernel} and 
\cref{lem:tl_equals_ent}, respectively. We prove (ii). 
Suppose that $g$ is of infinite order. We remark 
that $g$ is parabolic if and only if $g_*:V\to V$ fixes an isotropic vector in $L$
(see e.g. \cite[Exercise 4.7-18]{Rat}). 
If $g$ preserves a genus-one fibration, then its fiber class in $L$ is an  
isotropic vector fixed by $g_*$, and hence $g$ is parabolic. 
Conversely, suppose that $g$ is parabolic. Then, it follows from 
\cite[Proposition 2.2.1]{Cossec-Dolgachev-Liedtke_EnriquesI} that 
$g$ fixes an isotropic vector $f\in L$ that is nef. 
We may assume that $f$ is primitive in $L$. Then 
it follows from \cite[Proposition 2.3.10]{Huybrechts_K3book} 
(resp. \cite[Propositions 2.3.3 and 2.2.8]{Cossec-Dolgachev-Liedtke_EnriquesI}) 
that the linear system $|f|$ (resp. $|2f|$) defines a genus-one fibration, 
and it is preserved by $g$ in the case of K3 surfaces (resp. Enriques surfaces).  
The proof is complete. 
\end{proof}

\subsection{Irreducible holomorphic symplectic manifolds}\label{ss:IHS}
An \emph{irreducible holomorphic symplectic manifold} 
(abbreviated as an \emph{IHS manifold}) 
is a simply connected compact K\"ahler manifold $X$ such that 
$H^0(X, \Omega_X^2)$ is generated by an everywhere nondegenerate holomorphic $2$-form. 
K3 surfaces (over $\CC$) are examples of IHS manifolds, and IHS manifolds are regarded as 
higher-dimensional analogues of K3 surfaces. 
We observe that the automorphism group of an IHS manifold acts on a hyperbolic space by isometries
in a natural way, as in the case of surfaces discussed in \S \ref{ss:surfaces}. 
We refer to \cite{Huybrechts_CptHKMfds} for details. 

Let $X$ be an IHS manifold. 
Then the second cohomology group $H^2(X, \ZZ)$ is torsion-free and admits an inner product 
$(\cdot, \cdot):H^2(X, \ZZ)\times H^2(X, \ZZ)\to \ZZ$
called the \emph{Beauville--Bogomolov--Fujiki form}. 
The signature of the form is $(3, b_2(X) - 3)$, 
where $b_2(X)$ is the second Betti number. Moreover, 
the real part of $H^{2,0}(X) \oplus H^{0,2}(X)$ is a positive-definite
$2$-dimensional subspace of $H^2(X,\RR) = H^2(X, \ZZ)\otimes \RR$. 
The subgroup $\NS(X) \coloneq H^2(X, \ZZ) \cap H^{1,1}(X)$ of $H^2(X, \ZZ)$ is called 
the \emph{N\'eron--Severi lattice}. Let $\rho(X) = \rk \NS(X)$. 
If $X$ is projective, then the signature of $\NS(X)$ is given by $(1, \rho(X)- 1)$. 
In fact, the converse also holds (see \cite[\S26]{Huybrechts_CptHKMfds}). 

Let $g$ be an automorphism of $X$. Then
$g_* \coloneq (g^*)^{-1}: H^2(X,\ZZ)\to H^2(X,\ZZ)$
preserves the Beauville--Bogomolov--Fujiki form. 
We define the \emph{(homological) entropy} of $g$ to be 
\[ \frac{\dim_{\CC} X}{2} \log(\lambda(g_*)),\] 
where $\lambda(g_*)$ denotes the spectral radius of $g_*:H^2(X, \ZZ)\to H^2(X, \ZZ)$. 
By the Gromov--Yomdin theorem and \cite[Theorem 1.1]{Ogu09}, 
the homological entropy of $g$ coincides with the topological entropy of $g$. 

Assume that $X$ is projective. Let $L \coloneq \NS(X)$ and $V \coloneq L\otimes \RR$. 
Then, as in \S \ref{ss:surfaces}, we define the positive cone $\C$ to be  
the connected component of $\{ v\in V \mid (v,v)>0 \}$ that contains an ample class, 
and we associate to $X$ the hyperbolic space 
\[ \HH_X \coloneq \{ v\in \C \mid (v,v) = 1 \}.  \]
The automorphism group $\Aut(X)$ acts on $\HH_X$ by isometries via $g\mapsto g_*$.  
As in the case of surfaces, we say that an automorphism
$g$ of $X$ is \emph{elliptic}, \emph{parabolic}, or \emph{loxodromic} if the induced isometry
$g_*:\HH_X \to \HH_X$ is elliptic, parabolic, or loxodromic, respectively.

\begin{lem}\label{lem:tl_and_ent:HK}
Let $g$ be an automorphism of a projective IHS manifold $X$. 
Then the entropy of $g$ is equal to $\frac{\dim_\CC X}{2} \tau(g_*)$, where 
$\tau(g_*)$ denotes the translation length of $g_*:\HH_X\to \HH_X$.  
In particular, $g$ is loxodromic if and only if its entropy is positive. 
\end{lem}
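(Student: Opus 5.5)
The plan is to reduce to the surface case, i.e. to \cref{lem:tl_equals_ent}, by showing that the spectral radius of $g_*$ on all of $H^2(X,\RR)$ equals the spectral radius of its restriction to $V=\NS(X)\otimes\RR$. That restricted spectral radius is $e^{\tau(g_*)}$ by the same hyperboloid argument (\cite[Exercise 4.7-26]{Rat}). Granting the comparison, the entropy is $\frac{\dim_\CC X}{2}\log\lambda(g_*) = \frac{\dim_\CC X}{2}\tau(g_*)$. The last assertion then follows because $\tau(g_*)>0$ is exactly the definition of loxodromic.

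First I split the space. Let $T\subset H^2(X,\RR)$ be the orthogonal complement of $V$ with respect to the Beauville--Bogomolov--Fujiki form. Since $X$ is projective, $V$ has signature $(1,\rho(X)-1)$ and is nondegenerate, so $H^2(X,\RR)=V\oplus T$. Both summands are $g_*$-invariant: $g_*$ preserves the Hodge decomposition and the integral lattice, hence preserves $\NS(X)$, and it is an isometry. Therefore $\lambda(g_*)=\max\{\lambda(g_*|_V),\lambda(g_*|_T)\}$.

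Next I bound the eigenvalues on $T$. The space $T$ has signature $(2, b_2(X)-\rho(X)-2)$, and it contains the positive-definite $2$-plane $P$ given by the real part of $H^{2,0}(X)\oplus H^{0,2}(X)$. This plane is $g_*$-invariant, because $g^*$ preserves $H^{2,0}(X)$. On the orthogonal complement $P^\perp\cap T$, the form is negative definite, since it lies in $H^{1,1}$ and is orthogonal to $V$; it is also invariant. An isometry of a definite form lies in a compact orthogonal group, so all eigenvalues of $g_*|_T$ have modulus $1$. Consequently $\lambda(g_*)=\max\{1,\lambda(g_*|_V)\}=\lambda(g_*|_V)$, where the last equality holds because $g_*|_V$ has determinant $\pm1$, so $\lambda(g_*|_V)\ge 1$.

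Finally, I apply \cite[Exercise 4.7-26]{Rat} to $g_*|_V\in \mathrm{O}(V)$ preserving $\C$. This gives $\log\lambda(g_*|_V)=\tau(g_*:\HH_X\to\HH_X)$, exactly as in \cref{lem:tl_equals_ent}. I expect the main obstacle to be the bookkeeping in the second step: checking that the positive part of $T$ is precisely $P$, which uses that the positive index of the whole form is $3$, with one direction accounted for by $V$ and two by $P$. This is what makes the rest of $T$ definite and hence compact.
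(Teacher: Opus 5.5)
Your proposal is correct and has the same structure as the paper's proof. First, identify the spectral radius of $g_*$ on $H^2(X,\RR)$ with that of its restriction to $V=\NS(X)\otimes\RR$. Second, apply \cite[Exercise 4.7-26]{Rat} to $g_*|_V$. The only difference is that the paper cites Keum--Oguiso--Zhang for the first step, while you prove it directly: $g_*$ preserves the positive-definite plane $P\subset V^\perp$ and the negative-definite complement $P^\perp\cap V^\perp$, so its eigenvalues there have modulus one (using $H^{2,0}\oplus H^{0,2}\perp H^{1,1}$ for the BBF form, which you should state).
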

\begin{proof}
In this case, the spectral radius of $g_*:\NS(X)\to\NS(X)$ 
coincides with that of $g_*:H^2(X,\ZZ)\to H^2(X,\ZZ)$; 
see e.g. \cite[Theorem 4.1]{Keum-Oguiso-Zhang2009}.  
Then, the statement follows from \cite[Exercise 4.7-26]{Rat}. 
\end{proof}

\section{Gap theorems}\label{Section: gap-thm}

In this section, we prove the gap theorems for several stable norms. 

\subsection{Achiral elements}
Let $\Gamma$ be a group. 
We begin by introducing the following notion: 
\begin{dfn}\label{def-mirror-condition}
An element $a\in\Gamma$ is said to be \emph{achiral}
if $a^m$ is conjugate to $a^{-m}$ for some $m\in\ZZ_{>0}$. 
Otherwise, $a$ is \emph{chiral}.
\end{dfn}

Let $A$ be a symmetric subset of $\Gamma$ containing every element of finite order. 

\begin{lem}\label{achiral-implies}
For an achiral element $a\in\Gamma$, 
either 
some power of $a$ lies in a subgroup of $\Gamma$ isomorphic to $\ZZ^2$, or 
$\overline{\nu_A}(a) = 0$.  
\end{lem}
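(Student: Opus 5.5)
Write $b a^m b^{-1} = a^{-m}$ with $m>0$, and put $c \coloneq a^m$, so that $bcb^{-1}=c^{-1}$. The plan is to exploit the element $b$: either a power of $b$ commutes with a power of $c$ to produce a $\ZZ^2$, or $b$ and $bc$ are torsion and $c$ becomes a short product of elements of $A$.

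\textbf{Basic relations.} Conjugating twice gives $b^2 c b^{-2} = c$, so $b^2$ commutes with $c$. We may assume $a$ has infinite order; otherwise some power of $a$ is the identity and $\overline{\nu_A}(a)=0$ directly. Then $c$ has infinite order.

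\textbf{Case 1: $b$ has infinite order.} The subgroup $\langle b^2, c\rangle$ is abelian. If $\langle b^2\rangle\cap\langle c\rangle$ is trivial, then $\langle b^2,c\rangle\cong\ZZ^2$ and contains the power $c=a^m$, so the first alternative holds. If instead $b^{2k}=c^l$ for some $k\neq 0$, conjugate by $b$: the left side is unchanged while the right side becomes $c^{-l}$. Hence $c^{2l}=1$, so $l=0$ because $c$ has infinite order. This forces $b^{2k}=1$, contradicting that $b$ has infinite order.

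\textbf{Case 2: $b$ has finite order.} Consider $(bc)^2 = bcbc$. Since $bcb^{-1}=c^{-1}$, we have $bc = c^{-1}b$, so $(bc)^2 = c^{-1}b\,bc = c^{-1}b^2c = b^2$, using that $b^2$ commutes with $c$. Thus $bc$ also has finite order. Both $b^{-1}$ and $bc$ are torsion, hence lie in $A$, and $c = b^{-1}\cdot(bc)$. The same computation applied to $c^n$ (which also satisfies $bc^nb^{-1}=c^{-n}$) gives $c^n = b^{-1}\cdot (bc^n)$ with both factors torsion. So $\nu_A(c^n)\le 2$ for all $n$, and therefore $\overline{\nu_A}(c)=0$. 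Since $c\in\langle A\rangle$, the extended definition gives $\overline{\nu_A}(a)=\overline{\nu_A}(c)/m=0$.

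The main obstacle is Case 1: we must rule out a nontrivial relation between $b^2$ and $c$ so that $\langle b^2,c\rangle$ is genuinely $\ZZ^2$. The conjugation-sign argument above handles this.
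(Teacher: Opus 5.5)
Your proof is correct and follows the paper's argument. You make the same split on whether $b$ (equivalently $b^2$) has finite order, use the same subgroup $\langle b^2, a^m\rangle\cong\ZZ^2$ in the infinite case, and write powers of $a^m$ as bounded products of torsion elements in the finite case. Both of your steps are slightly sharper than the paper's: conjugating by $b$ rules out every relation $b^{2k}=a^{ml}$, where the paper only remarks that $b^2$ is not a power of $a^m$, and $c^n=b^{-1}(bc^n)$ with $(bc^n)^2=b^2$ gives $\nu_A(c^n)\le 2$ in place of the paper's identity $(a^{mk}ba^{-mk}b)^d=a^{2mkd}$.
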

\begin{proof}
Let $a\in \Gamma$ be an achiral element: 
$ba^mb^{-1}=a^{-m}$ for some $b\in\Gamma$ and $m\in\ZZ_{>0}$.  
If the order of $a$ is finite then $\nu_A(a^k)\leq 1$ for any $k\in\ZZ$, hence 
$\overline{\nu_A}(a) = 0$.  
Suppose that $a$ has infinite order. Note that $a^m$ and $b^2$ are commutative and that 
$b^2$ is not a power of $a^m$.  
If $b^2$ has infinite order then the subgroup $\langle a^m, b^2\rangle$ is isomorphic to 
$\ZZ^2$, and we are done. 
Assume that the order of $b^2$ is finite, say $d$. For any $k\in \ZZ$, we have 
\[ (a^{mk}ba^{-mk}b)^d 
= (a^{2mk}b^2)^d
= a^{2mkd}b^{2d}
= a^{2mkd}. 
\]
Since $a^{mk}ba^{-mk}$ and $b$ are of finite order,
we note that, at this point, the case of $\overline{\nu_A}(a)=\infty$ has been ruled out.
We then obtain
\[ \frac{1}{k}\nu_A((a^{2md})^k)
= \frac{1}{k}\nu_A((a^{mk}ba^{-mk}b)^d)
\le \frac{2d}{k}
\xrightarrow{k\to \infty} 0, 
\]
hence $\overline{\nu_A}(a) = 0$. 
\end{proof}

The following is a part of our gap theorems. 

\begin{prop}\label{achiral-WPD-normzero}
Suppose that $\Gamma$ acts by isometries on a hyperbolic space. 
For any achiral loxodromic WPD element $a\in \Gamma$, 
we have $\overline{\nu_A}(a)=0$.  
\end{prop}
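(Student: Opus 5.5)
By \cref{achiral-implies}, applied to the achiral element $a$, it suffices to rule out the first alternative. That is, we show that no power of $a$ lies in a subgroup of $\Gamma$ isomorphic to $\ZZ^2$. The argument I plan to give actually proves something slightly stronger, which plugs directly into the proof of \cref{achiral-implies}.

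Let $ba^mb^{-1}=a^{-m}$ with $m>0$. The proof of \cref{achiral-implies} splits according to whether $b^2$ has finite order; I claim that for a loxodromic WPD element it always does. Since $b^2$ commutes with $a^m$, it suffices to prove the following. \emph{The centralizer of a loxodromic WPD element $c$ (here $c=a^m$, which is again loxodromic and WPD) is virtually $\langle c\rangle$.} Moreover, $b^2$ cannot be a nontrivial power of $a^m$ times something of infinite order in a way compatible with the relation. Indeed, $b^2$ commutes with $b$, and $b$ inverts $a^m$. So if $b^{2k}=a^{mj}$ for some $k\neq0$, conjugating by $b$ gives $a^{mj}=a^{-mj}$. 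Hence $j=0$, since $a$ has infinite order (being loxodromic), and then $b^{2k}=1$, so $b^2$ has finite order. Once this is established, the argument in the proof of \cref{achiral-implies} gives $\overline{\nu_A}(a)=0$ verbatim: $b$ and its conjugates have finite order and lie in $A$, and $(a^{mk}ba^{-mk}b)^d=a^{2mkd}$.

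The main step is the standard fact that for a loxodromic WPD element $c$, the elementary subgroup $E(c)$ is virtually cyclic and contains the centralizer of $c$ (Bestvina--Fujiwara, or Dahmani--Guirardel--Osin, Lemma 6.5). I would sketch it as follows. Fix $x$ on a quasi-axis of $c$ and let $z$ commute with $c$. Then $d(c^nx, zc^nx)=d(x,zx)$ for all $n$, so every element of the centralizer moves the whole $c$-orbit of $x$ by a bounded amount. Fix $r_0$, and choose $N$ from the WPD condition for the radius $r=r_0+2L+\tau$-type slack. We may translate each centralizer element $z$ by a power of $c$ so that $z' = c^{-j}z$ satisfies $d(x,z'x)\le r_0$, where $r_0$ depends on the quasi-axis constants and the Morse constant (\cref{MorseLemma}): $zx$ lies within bounded distance of the axis, since $z$ maps the axis to a quasi-geodesic with the same endpoints. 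Then $d(c^Nx, z'c^Nx)=d(x,z'x)\le r_0$ as well. WPD therefore shows that there are only finitely many such $z'$, so the centralizer is a finite union of cosets of $\langle c\rangle$. Hence if $b^2$ had infinite order, some power $b^{2k}$ would lie in $\langle a^m\rangle$, and by the previous paragraph this forces $b^{2k}=1$, a contradiction.

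The main obstacle is the uniform control $d(x,z'x)\le r_0$ after translating by powers of $c$. This requires that $z$ fixes the two endpoints of $c$ at infinity and therefore preserves the quasi-axis up to Hausdorff distance $L$ by \cref{MorseLemma}. That in turn follows from $z$ commuting with $c$: $z$ maps $c^{\pm n}x$ to $c^{\pm n}zx$, which stays at bounded distance from $c^{\pm n}x$. I expect the bookkeeping of constants here to be the only delicate point; alternatively, one can simply cite the virtual cyclicity of $E(c)$ from \cite{DGO17}.
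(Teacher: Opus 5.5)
Your proposal is correct and follows the paper's argument. The paper also reduces to \cref{achiral-implies} and rules out the $\ZZ^2$ alternative using the fact that the centralizer of a loxodromic WPD element is virtually cyclic (it cites Bestvina--Fujiwara, Proposition~6); your direct check that $b^2$ has finite order, by applying this fact to $a^m$, and your sketch of the centralizer fact simply unpack that citation.
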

\begin{proof}
Since the centralizer of a loxodromic WPD element is virtually cyclic \cite[Proposition 6]{BF02}, 
we have $\overline{\nu_A}(a) = 0$ by \cref{achiral-implies}.
\end{proof}

\subsection{Uniform estimate for quasimorphisms}
\label{subection: main-estimate}
Here we prove a uniform estimate (\cref{th:uniform-lower-bd}) for quasimorphisms, 
which is a key ingredient in the proofs of the gap theorems. 

Let $(S,d)$ be a $\delta$-hyperbolic space, and let $\Gamma$ be a group acting on $S$. 
We begin by recalling the construction of Epstein--Fujiwara quasimorphisms from \cite{Fuj98} 
(see also \cite[\S 4.1]{Huangfu-Wan}).  
A \emph{copy} of a path $\alpha$ in $S$ is a path of the form $g\alpha$ for some $g\in\Gamma$. 
For a path $w$ of finite length $|w|$, a real number $W$ with $0<W<|w|$,  
and a geodesic $[x,y]$ in $S$ from $x$ to $y$, we define
\begin{equation}\label{def_count_func}
c_{w,W}([x,y]) \coloneqq d(x,y)-\inf_\alpha\{|\alpha|-W|\alpha|_w\}\in\RR_{\ge0}, 
\end{equation}
where $\alpha$ ranges over all the paths from $x$ to $y$, and $|\alpha|_w$ is 
the maximal number of disjoint oriented copies of $w$ which can be obtained as nontrivial subpaths of $\alpha$. 
Note that (\ref{def_count_func}) does not depend on the choice of the geodesic from $x$ to $y$. 

Since a path realizing the infimum in (\ref{def_count_func}) does not exist in general (\cite[Remark 4.3]{Huangfu-Wan}), 
Huangfu--Wan \cite{Huangfu-Wan} introduced the following notion: 
a path $\beta$ between $x$ and $y$ is \emph{almost realizing path} of $c_{w, W}([x,y])$ if it satisfies
\[
d(x,y)-(|\beta|-W|\beta|_w) 
\ge \max\left\{ 
c_{w,W}([x,y])-W, ~ 
\frac{c_{w, W}([x,y])}{2}
\right\}. 
\]
Then, an almost realizing path always exists, 
and if $|\beta|_w=0$ for an almost realizing path $\beta$ then $c_{w,W}([x,y])=0$
 (\cite[\S 4.1]{Huangfu-Wan}). 

\begin{lem}[{\cite[Lemma 4.5]{Huangfu-Wan}}]\label{Fuji98_Lemma3.3}
Let $\beta$ be an almost realizing path of $c_{w, W}([x,y])$. 
Then $\beta$ is $(\frac{|w|}{|w|-W}, 3W)$-quasi-geodesic.
\end{lem}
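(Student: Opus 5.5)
Set $\lambda=|w|/(|w|-W)$ and $\epsilon=3W$. We must show that for all $t,t'$ along $\beta$ (parametrized by arc length), $\frac{1}{\lambda}|t-t'|-\epsilon\le d(\beta(t),\beta(t'))\le \lambda|t-t'|+\epsilon$. The upper bound is immediate: with arc-length parametrization, $d(\beta(t),\beta(t'))\le |t-t'|$ and $\lambda\ge 1$. The substance is the lower bound. Let $\beta_0=\beta|_{[t,t']}$ be the subpath between $p=\beta(t)$ and $q=\beta(t')$. We compare $\beta$ with the competitor path $\beta'$ obtained by replacing $\beta_0$ with a geodesic $[p,q]$.

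\textbf{Step 1 (effect of the surgery on the two quantities).} The length changes to $|\beta'|=|\beta|-|\beta_0|+d(p,q)$. For the copy count, a maximal family of disjoint copies of $w$ in $\beta$ contributes to $\beta'$ every copy lying entirely outside $\beta_0$. The copies that are destroyed are those meeting $\beta_0$: those contained in $\beta_0$, plus at most two straddling its endpoints. Disjoint copies inside $\beta_0$ number at most $|\beta_0|/|w|$. Hence
\[ |\beta'|_w\ge |\beta|_w-\frac{|\beta_0|}{|w|}-2. \]

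\textbf{Step 2 (use near-optimality).} By the definition of $c_{w,W}$ as an infimum, $d(x,y)-(|\beta'|-W|\beta'|_w)\le c_{w,W}([x,y])$. The almost-realizing property gives $d(x,y)-(|\beta|-W|\beta|_w)\ge c_{w,W}([x,y])-W$. Subtracting these,
\[ (|\beta'|-W|\beta'|_w)-(|\beta|-W|\beta|_w)\ge -W. \]
Inserting Step 1 yields
\[ d(p,q)-|\beta_0|+W\Big(\frac{|\beta_0|}{|w|}+2\Big)\ge -W, \]
that is,
\[ d(p,q)\ge |\beta_0|\Big(1-\frac{W}{|w|}\Big)-3W=\frac{|t-t'|}{\lambda}-3W. \]
This is exactly the lower bound with constant $3W$.

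\textbf{Main obstacle.} The delicate point is the bookkeeping in Step 1, specifically bounding how many copies of $w$ are lost. A copy straddling an endpoint of $\beta_0$ is destroyed by the surgery but is not counted inside $\beta_0$. Since the copies are disjoint, there is at most one straddling copy per endpoint, which gives the $+2$. A copy lying inside $\beta_0$ has length exactly $|w|$, and disjointness gives the count $\le |\beta_0|/|w|$.

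A further point to handle is that the copies in $\beta'$ must remain disjoint subpaths of $\beta'$. This holds because the surviving copies sit in the unchanged portions of $\beta$, which appear in the same order in $\beta'$.

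Finally, the constant $W$ coming from the "$c-W$" clause of almost realization combines with the $2W$ from the endpoint copies. This accounts exactly for the constant $3W$.
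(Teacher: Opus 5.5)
Your proof is correct, and it is the standard Epstein--Fujiwara surgery argument: replace $\beta|_{[t,t']}$ by a geodesic, lose at most $|\beta_0|/|w|+2$ disjoint copies of $w$, and compare with the infimum using the $c_{w,W}([x,y])-W$ clause of almost-realization; the paper itself gives no proof beyond citing Lemma 4.5 of Huangfu--Wan, which rests on this argument. Your bookkeeping reproduces the constants $(\frac{|w|}{|w|-W},3W)$ exactly, with $2W$ coming from the two straddling copies and $W$ from the almost-realizing slack.
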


Let $x_0\in S$ be a point. For $a\in \Gamma$, we define
\[  
h_{w,W, x_0}(a) \coloneq c_{w,W}([x_0, a x_0]) - c_{w^{-1}, W}([x_0, a x_0]). 
\]

\begin{prop}[{\cite[Proposition 3.10]{Fuj98} and \cite[Lemma 4.6]{Huangfu-Wan}}]\label{Fuj98_Prop3.10}
The map $h_{w, W, x_0}: \Gamma\to \RR$ is a quasimorphism with defect at most 
$12L + 6W + 48\delta$, where $L = L(\delta, \frac{|w|}{|w|-W}, 3W)$ is a Morse constant. 
\end{prop}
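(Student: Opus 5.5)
We would follow Fujiwara's original argument, in the form adapted by Huangfu--Wan, and track the constants carefully. It suffices to show that $c_{w,W}$ is itself a quasimorphism-like function along triangles, and then subtract the $w^{-1}$-term. Fix $a,b\in\Gamma$ and consider the geodesic triangle with vertices $x_0$, $ax_0$, $abx_0$. By $\Gamma$-invariance of the counting function we have $c_{w,W}([ax_0,abx_0]) = c_{w,W}([x_0,bx_0])$, since copies of $w$ are carried to copies under the action. The task is therefore to show
\[
\bigl| c_{w,W}([x_0,abx_0]) - c_{w,W}([x_0,ax_0]) - c_{w,W}([ax_0,abx_0]) \bigr| \le 6L + 3W + 24\delta,
\]
up to the error coming from the inverse term. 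Applying the same bound to $w^{-1}$ and taking the difference then gives the defect $12L+6W+48\delta$.

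The key steps run as follows. First choose almost realizing paths $\beta_1,\beta_2,\beta_3$ for the three sides. By \cref{Fuji98_Lemma3.3} each is a $(\frac{|w|}{|w|-W},3W)$-quasi-geodesic. By the Morse lemma (\cref{MorseLemma}) each $\beta_i$ then lies within Hausdorff distance $L$ of the corresponding geodesic side.

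Next use $\delta$-slimness to find internal points (a tripod decomposition) that split each side into two pieces, each fellow-travelling a piece of another side within roughly $\delta$. Cut each $\beta_i$ at points $L$-close to these internal points. Then build a competitor path for the long side $[x_0,abx_0]$ as follows: concatenate the first portion of $\beta_1$, a short connecting segment of length at most $2L+4\delta$, and the latter portion of $\beta_2$. Disjoint copies of $w$ in those subpaths survive, except for at most a bounded number of copies destroyed at the cut points, and each such loss costs $W$.

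Comparing the cost $|\alpha|-W|\alpha|_w$ of this competitor with the infimum yields one inequality. In the other direction, cut $\beta_3$ and reroute through the vertex $ax_0$ to produce competitor paths for the two short sides. Adding up, using the almost-realizing defect (at most $W$ per path) and the lengths of the connecting segments, gives the two-sided bound.

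We expect the main obstacle to be the bookkeeping at the cut points. Copies of $w$ straddling a cut point must be discarded, and one must check that only boundedly many are lost: at most one per cut, by disjointness. One must also check that the lengths of the detours are controlled by $L$ and $\delta$ alone, independently of $|w|$. This is where the precise constants $12L+6W+48\delta$ arise. We would follow \cite[Lemma 4.6]{Huangfu-Wan} line by line for this estimate, since the infimum-not-attained issue is exactly what the almost realizing paths are designed to absorb.
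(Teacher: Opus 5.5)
The reduction in your first paragraph does not work. The displayed two-sided bound for $c_{w,W}$ alone is false, so there is nothing to ``apply to $w^{-1}$ and subtract''.

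Take $b=a^{-1}$. Then $abx_0=x_0$ and $c_{w,W}([x_0,x_0])=0$. Reversing a path turns oriented copies of $w$ into oriented copies of $w^{-1}$, so $c_{w,W}([ax_0,x_0])=c_{w^{-1},W}([x_0,ax_0])\ge 0$. Your inequality would then force $c_{w,W}([x_0,ax_0])\le 6L+3W+24\delta$ for every $a\in\Gamma$. But take $w=[x_0,gx_0]$, $W=K+1$ and $a=g^k$, with $g$ loxodromic of large translation length and $x_0$ chosen as in \cref{lem:TandK}. The computation at the end of the proof of \cref{th:uniform-lower-bd} gives $c_{w,W}([x_0,g^kx_0])\ge k$, which is unbounded.

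In your construction, the failure is the first competitor. The initial part of $\beta_1$, a connector, and the final part of $\beta_2$ only bound $c_{w,W}$ of the long side from below by the contribution of the two \emph{outer} pieces. The pieces of $\beta_1$ and $\beta_2$ near the vertex $ax_0$ are discarded, and they can contain arbitrarily many copies of $w$; in the example above they are all of $\beta_1$ and $\beta_2$. Only your second direction, rerouting $\beta_3$ through $ax_0$, is valid for $c_{w,W}$ alone. It gives $c_{w,W}(ab)\le c_{w,W}(a)+c_{w,W}(b)+\mathrm{const}$.

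The missing idea is the antisymmetry $c_{w,W}(\bar\gamma)=c_{w^{-1},W}(\gamma)$, i.e.\ $h(\bar\gamma)=-h(\gamma)$ for a reversed geodesic. It must be used inside the triangle estimate, not after it; this is the whole reason for taking $c_{w,W}-c_{w^{-1},W}$. The argument of Fujiwara and Huangfu--Wan, which the paper cites without reproducing, works with $h$ throughout:
\begin{enumerate}
\item Cut the three sides at the internal points of the tripod into six geodesic pieces.
\item Show that $c_{w,W}$ is additive up to $O(L+W)$ when a geodesic is subdivided. The lower bound comes from concatenating near-optimal paths. The upper bound comes from cutting an almost realizing path at a point within $L$ of the subdivision point, losing at most one copy of $w$.
\item Show that $c_{w,W}([x,y])$ changes by at most $2(d(x,x')+d(y,y'))$ when the endpoints move.
\item Note that the two pieces at each vertex fellow-travel with opposite orientations. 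Their $h$-values therefore cancel up to $O(\delta)$ by antisymmetry, whereas their $c_{w,W}$-values add up to roughly $c_{w,W}(\sigma)+c_{w^{-1},W}(\sigma)$, which does not cancel.
\end{enumerate}
The defect $12L+6W+48\delta$ is assembled from these subdivision and cancellation errors, not by doubling a bound valid for $c_{w,W}$.
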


Using the Epstein--Fujiwara quasimorphism defined above,  
we will associate a quasimorphism $h_g$ to each loxodromic element $g\in \Gamma$. 
We need the following lemma. 

\begin{lem}[{\cite[Lemma 6.27]{DGO17}}]\label{lem:TandK}
There exist constants $T, K>0$ depending only on $\delta$ such that 
the following holds: for any $g\in \Gamma$ with $\tau(g)\geq T$, any $x\in S$ with 
$d(x, gx) - \inf_{z\in S}{d(z, gz)} \leq \delta$, and any $i\in \ZZ_{\geq 0}$, we have 
\[ i\tau(g) \leq d(x,g^ix) \leq i \tau(g) + K.  \]
\end{lem}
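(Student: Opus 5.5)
The lower bound is immediate, and for the upper bound I would show that $f(i)\coloneq d(x,g^ix)$ is superadditive up to a constant depending only on $\delta$, then apply Fekete's lemma. For the lower bound, the triangle inequality gives $d(x,g^{ni}x)\le n\,d(x,g^ix)$ for every $n\geq1$, so
\[
\tau(g)=\lim_{n\to\infty}\frac{d(x,g^{ni}x)}{ni}\le \frac{d(x,g^ix)}{i}.
\]
Hence $i\tau(g)\le d(x,g^ix)$ holds for every $x$ and every $i\geq1$; for $i=0$ the inequality is trivial.

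\emph{Step 1: small Gromov products at the junctions.} Put $D\coloneq d(x,gx)$ and $p\coloneq (g^{-1}x\mid gx)_x$. Let $y$ be the point on $[x,gx]$ at distance $\min\{p,D/2\}$ from $x$. By $\delta$-thinness of the triangles $(g^{-1}x,x,gx)$ and $(x,gx,g^2x)$, the points $y$ and $gy$ both lie within $O(\delta)$ of a geodesic from $g^{-1}x$ to $g^2x$ (or of $[x,g^2x]$). Comparing distances along that geodesic gives
\[
d(y,gy)\le D-2\min\{p,D/2\}+c\delta
\]
for a universal constant $c$. Near-minimality of $x$ says $D\le d(y,gy)+\delta$. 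So $\min\{p,D/2\}\le (c+1)\delta/2$. Since $D\ge\tau(g)\ge T$ and we may take $T$ large compared with $\delta$, this forces $p\le C_0\delta$ for a universal $C_0$.

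\emph{Step 2: the orbit is a uniform quasi-geodesic.} Consider the piecewise geodesic path $\gamma$ through $\dots,g^{-1}x,x,gx,g^2x,\dots$. Its segments have length $D\ge T$, and by Step 1 (translated by powers of $g$) every junction has Gromov product at most $C_0\delta$. By the standard local-to-global criterion for broken geodesics in hyperbolic spaces (e.g.\ \cite[III.H]{Bridson-Haefliger1999}), choosing $T\gg C_0\delta$ makes $\gamma$ a $(\lambda,\epsilon)$-quasi-geodesic with $\lambda,\epsilon$ depending only on $\delta$. By \cref{MorseLemma}, for all $i,j\ge0$ the geodesic $[g^{-i}x,g^jx]$ then passes within $L=L(\delta,\lambda,\epsilon)$ of the vertex $x$ of $\gamma$. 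It follows that $(g^{-i}x\mid g^jx)_x\le L+O(\delta)$.

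\emph{Step 3: quasi-superadditivity and Fekete.} Using $g$-invariance, $d(x,g^{i+j}x)=d(g^{-i}x,g^jx)$. The bound on the Gromov product from Step 2 gives
\[
f(i+j)\ge f(i)+f(j)-K,\qquad K\coloneq 2L+O(\delta).
\]
So $f-K$ is superadditive, and Fekete's lemma yields $\frac{f(i)-K}{i}\le\lim_n\frac{f(n)}{n}=\tau(g)$. That is, $d(x,g^ix)\le i\tau(g)+K$, with $T$ and $K$ depending only on $\delta$.

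The main obstacle is Step 1, the uniform bound on the junction product, which needs a careful thin-triangle estimate. If it fails, the fallback is a direct tree-approximation argument for the four points $g^{-1}x,x,gx,g^2x$.
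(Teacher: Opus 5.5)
Your argument is correct. The paper gives no proof of this lemma; it quotes it from \cite[Lemma 6.27]{DGO17}. So you are not departing from an argument in the paper but supplying one, and your route is the natural one. Near-minimal displacement forces the junction Gromov product $(g^{-1}x\mid gx)_x$ to be $O(\delta)$. Long sides together with small junction products make the orbit broken line $\bigcup_i g^i[x,gx]$ a quasi-geodesic with constants depending only on $\delta$. This gives $(g^{-i}x\mid g^jx)_x\le K/2$ for all $i,j\ge 0$, so $i\mapsto d(x,g^ix)-K$ is superadditive, and Fekete's lemma yields $d(x,g^ix)\le i\tau(g)+K$.

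The real content of the lemma is that the additive error must not grow with $i$. The naive estimate $d(x,g^ix)\le i\,d(x,gx)\le i(\tau(g)+O(\delta))$ fails exactly there, and your quasi-superadditivity device repairs it. Compared with the bare citation, your proof shows why $T$ and $K$ depend on $\delta$ alone, which is the uniformity the paper relies on in \cref{lem:l_g} and \cref{th:uniform-lower-bd}.

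Two justifications should be tightened, though neither is a real gap.

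\emph{Step~1.} The four-point picture is only sketched. A clean way to get your inequality is to use the single triangle $(g^{-1}x,x,gx)$, with the side $[g^{-1}x,x]$ chosen to be $g^{-1}[x,gx]$. Let $y'$ be the point of that side at distance $t=\min\{p,D/2\}$ from $x$. Thinness gives $d(y,y')\le c\delta$, where $c$ is the universal thinness constant for $\delta$-slim triangles. Moreover $gy'$ lies on $[x,gx]$ at distance $D-t$ from $x$. Hence $d(y,gy)\le d(y,gy')+d(gy',gy)\le D-2t+c\delta$. Near-minimality then gives $t\le (c+1)\delta/2$, so $p\le (c+1)\delta/2$ once $T>(c+1)\delta$.

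\emph{Step~2.} \cite[Theorem III.H.1.13]{Bridson-Haefliger1999} concerns $k$-local geodesics, and your broken line is not one, since its corners have Gromov product up to $C_0\delta$. What you need is the local-to-global principle for local quasi-geodesics. More directly, you can use Olshanskii's lemma on broken lines whose sides are long compared with the bound on the junction Gromov products. That lemma places the broken line from $g^{-i}x$ to $g^jx$ in a uniformly bounded neighbourhood of $[g^{-i}x,g^jx]$. This gives $(g^{-i}x\mid g^jx)_x=O(\delta)$ directly, without the Morse lemma.
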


In the following, we fix the constants $T$ and $K$ in \cref{lem:TandK}. 
Let $g\in \Gamma$ with $\tau(g)\geq 2K+2$, and take a point $x_0\in S$. 
Then $d(x_0, gx_0) \geq \tau(g) \geq 2K+2$. 
Put $W \coloneq K + 1$. Then 
$\frac{d(x_0, g x_0)}{d(x_0, g x_0)-W} \leq 2$, and
any almost realizing path of $c_{[x_0, g x_0], W}$ 
is a $(2, 3K + 3)$-quasi-geodesic by \cref{Fuji98_Lemma3.3}. 
We fix a Morse constant $L = L(\delta, 2, 3K+3)$. 
We write 
\[ h_{g} = h_{[x_0, g x_0], K+1, x_0} \] 
for simplicity.

\begin{prop}\label{prop:upper-bdd-of-defect}
The defect of the homogenization $\overline{h_g}$ of $h_g$ is at most 
$D \coloneq 24 L + 12(K+1) + 96\delta$, which depends only on $\delta$. 
\end{prop}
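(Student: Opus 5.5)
The argument has two steps. First we bound the defect of $h_g$ itself using \cref{Fuj98_Prop3.10}. Then we invoke the standard fact that homogenization at most doubles the defect of a quasimorphism.

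\textbf{Step 1.} By construction $h_g = h_{w,W,x_0}$ with $w=[x_0,gx_0]$ and $W=K+1$. The standing assumption $\tau(g)\ge 2K+2$ gives $|w| = d(x_0,gx_0)\ge 2K+2 > W$, so the construction applies. It also gives $\frac{|w|}{|w|-W}\le 2$. A $(\lambda,\epsilon)$-quasi-geodesic with $\lambda\le 2$ is a $(2,\epsilon)$-quasi-geodesic, since the defining inequalities only weaken as $\lambda$ increases. So every almost realizing path is a $(2,3K+3)$-quasi-geodesic, and the Morse constant $L=L(\delta,2,3K+3)$ may be used in place of $L(\delta,\frac{|w|}{|w|-W},3W)$. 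The proof of \cref{Fuj98_Prop3.10} uses the Morse constant only through the Morse lemma for almost realizing paths, so it goes through verbatim. This gives
\[ D_{h_g}\le 12L+6(K+1)+48\delta. \]
This step is the main point needing care: one must check that the defect estimate is monotone in the Morse constant and does not use the exact ratio $|w|/(|w|-W)$.

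\textbf{Step 2.} For any quasimorphism $\psi$, the homogenization satisfies $D_{\overline\psi}\le 2D_\psi$. I would cite this (e.g.\ Calegari's \emph{scl}, Lemma 2.58) or prove it directly:
\begin{enumerate}
\item Induction gives $|\psi(a^n)-n\psi(a)|\le (n-1)D_\psi$, so $|\overline\psi(a)-\psi(a)|\le D_\psi$.
\item For commuting elements the error does not accumulate, and one gets $\overline\psi(ab)=\overline\psi(a)+\overline\psi(b)$.
\item In general, the cleanest route is that $\overline\psi$ is homogeneous and hence conjugation invariant. One then uses Bavard's estimate $|\overline\psi([a,b])|\le D_{\overline\psi}$ together with the computation of $(ab)^n$ against $a^nb^n$, which differ by a product of at most about $n/2$ commutators. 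Combined with the bounded distance between $\psi$ and $\overline\psi$ from item 1, this yields the factor $2$.
\end{enumerate}

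Combining the two steps,
\[ D_{\overline{h_g}}\le 2\bigl(12L+6(K+1)+48\delta\bigr)=24L+12(K+1)+96\delta = D. \]
Since $K$ and $L=L(\delta,2,3K+3)$ depend only on $\delta$, so does $D$.
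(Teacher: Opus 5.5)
Your proposal is correct and is essentially the paper's proof. The paper also bounds $D_{h_g}\le 12L+6(K+1)+48\delta$ via \cref{Fuj98_Prop3.10} and then applies $D_{\overline{\psi}}\le 2D_{\psi}$ from Calegari's book; your check that $L(\delta,2,3K+3)$ may replace $L(\delta,\frac{|w|}{|w|-W},3W)$ makes explicit a point the paper leaves implicit. Rely on that citation rather than your item~3 sketch, which as written does not give the factor $2$: bounding the commutators through $|\overline{\psi}([a,b])|\le D_{\overline{\psi}}$ is circular, and the standard argument instead uses Bavard's equality $D_{\overline{\psi}}=\sup_{a,b}|\overline{\psi}([a,b])|$, then bounds $|\overline{\psi}([a,b])|\le 2D_\psi$ via $\mathrm{cl}([a,b]^n)\le\lfloor n/2\rfloor+1$ applied to the antisymmetrization $\frac12(\psi(a)-\psi(a^{-1}))$ of $\psi$.
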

\begin{proof}
By \cref{Fuj98_Prop3.10}, it follows  that $D_{h_g} \leq 12 L + 6(K+1) + 48\delta$. 
On the other hand, it is known that $D_{\overline{\psi}} \leq 2D_\psi$ 
for any quasimorphism $\psi$; see \cite[Corollary 2.59]{scl}. 
Therefore, $D_{\overline{h_g}} \leq 2D_{h_g} \leq D $ as desired. 
\end{proof}

The goal of this subsection is to prove the following theorem. 

\begin{thm}\label{th:uniform-lower-bd}
Let $\Gamma$ be a group acting acylindrically on $S$. 
Then there exists $M \in \ZZ_{>0}$, depending only on $\delta$ and  
the parameters of acylindricity, such that 
$\overline{h_{g^M}}(g) \geq \frac{1}{M}$ for any chiral loxodromic element $g\in \Gamma$. 
\end{thm}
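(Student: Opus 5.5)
We follow the Calegari--Fujiwara strategy, adapted to acylindrical actions by means of \cref{th:t_0-is-positive} and \cref{lem:TandK}.

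\textbf{Step 1: choice of $M$.} By \cref{th:t_0-is-positive} every loxodromic $g$ has $\tau(g)>\eta$, where $\eta$ depends only on $\delta$ and the acylindricity parameters. We choose $M\in\ZZ_{>0}$ so that $M\eta\ge\max\{T,2K+2\}$, and large enough to absorb the finitely many constants that appear below (Morse constants, the acylindricity constants $R,N$ for $r$ of size about $L+K+\delta$). Then $\tau(g^M)=M\tau(g)\ge\max\{T,2K+2\}$. We pick $x_0$ almost minimizing for $g^M$, so that $d(x_0,g^Mx_0)-\inf_z d(z,g^Mz)\le\delta$. By \cref{lem:TandK}, the concatenation of the translates $g^{Mi}[x_0,g^Mx_0]$ is a $(2,3K+3)$-quasi-geodesic for this choice. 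Put $w=[x_0,g^Mx_0]$, so that $h_{g^M}=h_{w,K+1,x_0}$.

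\textbf{Step 2: lower bound for $c_w$.} Consider $c_w([x_0,g^{Mn}x_0])$. The path obtained by concatenating $n$ copies of $w$ contains $n$ disjoint copies of $w$. Its length is $n|w|\le d(x_0,g^{Mn}x_0)+nK+O(1)$, using $d(x_0,g^{Mn}x_0)\ge nM\tau(g)$ and $|w|\le M\tau(g)+\delta$. Hence
\[
c_w([x_0,g^{Mn}x_0])\ge d(x_0,g^{Mn}x_0)-n|w|+n(K+1)\ge n-O(1),
\]
where $O(1)$ denotes a constant independent of $n$. It follows that $\overline{h_{g^M}}(g^M)\ge 1-\limsup_n \frac{1}{n}c_{w^{-1}}([x_0,g^{Mn}x_0])$.

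\textbf{Step 3 (main obstacle): $c_{w^{-1}}$ is bounded along the axis of a chiral $g$.} Suppose $c_{w^{-1}}([x_0,g^{Mn}x_0])>0$. An almost realizing path $\beta$ then contains a copy $a w^{-1}$ of $w^{-1}$. By \cref{Fuji98_Lemma3.3} and \cref{MorseLemma}, $\beta$ stays $L$-close to the quasi-axis of $g$. So the translate $a w^{-1}$, which runs from $ag^Mx_0$ to $ax_0$, fellow-travels a segment of the axis of length about $M\tau(g)$, but with reversed orientation. Up to replacing $a$ by $g^ja$, the element $a$ nearly fixes two points at distance about $M\tau(g)\ge R$. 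The same holds for the elements $g^{-i}ag^{-i}$, $0\le i\le k$, when the overlap covers several periods. Acylindricity bounds the number of such elements by $N$. For $M$ large, pigeonhole therefore produces an element of the form $g^{-i}ag^{i}$ that coincides with $g^{j}ag^{j'}$ in a way yielding a relation $a g^{m} a^{-1}=g^{-m}$ with $0<m\le N$. This contradicts chirality. The delicate part is the bookkeeping: one must show that an anti-aligned overlap of length at least roughly $M\tau(g)$ forces a bounded-period reversing element. I would follow the corresponding argument in \cite{Cal-Fuj10} (Lemma 3.x on ``anti-aligned copies''), replacing properness by acylindricity at the counting step. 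Granting this, no copy of $w^{-1}$ can appear, so $c_{w^{-1}}([x_0,g^{Mn}x_0])=0$ for all $n$.

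\textbf{Step 4: conclusion.} By homogeneity, $\overline{h_{g^M}}(g)=\frac{1}{M}\overline{h_{g^M}}(g^M)\ge \frac1M$, possibly after enlarging $M$ so that the $O(1)$ terms and the constant in Step 2 give a lower bound of at least $1$ per period. The constant $M$ depends only on $\delta$ and the acylindricity parameters, as required.
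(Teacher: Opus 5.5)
Your overall strategy is the paper's, and Steps 2 and 4 are essentially its computation. Step 3, however, carries the whole content of the theorem: it is the ``no reverse counting'' statement proved in the paper as \cref{prop:key-for-uniform-lower-bd}. You grant it rather than prove it, and the counting you sketch is applied to the wrong elements. A copy $aw^{-1}$ running forward along the quasi-axis means that $a$ carries a segment of the axis onto another segment with the orientation reversed. So on the axis $a$ is a coarse reflection, moving a point at distance $D$ from its centre by roughly $2D$. Replacing $a$ by $g^ja$ does not change this. There is also no reason for $a$, $g^ja$ or $g^{-i}ag^{-i}$ to nearly fix two points at distance $\geq R$. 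For instance, take a half-turn about a point on the axis of a hyperbolic translation of $\HH^2$: it fixes only one point, and $g^{-i}ag^{-i}=a$ for every $i$. So acylindricity cannot be applied to the family you name, and the pigeonhole step yields nothing as written.

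The missing idea is to feed into acylindricity elements that translate in the \emph{same} direction, obtained by dividing out $a$.
\begin{itemize}
\item Take $g_1=g^{M_1}$ with $\tau(g_1)\geq\max\{T,2,R+3L\}$, and put $e=ag_1^{-1}a^{-1}$. This $e$ moves along the copy in the same direction as $g_1$.
\item Consider $e^{-i}g_1^{i}$ for $0\le i\le N$; these are the inverses of $(g_1^{-i}ag_1^{-i})a^{-1}$. The paper shows they move by at most $10L+K$ the projections $z_0,z_0'$ onto $l_{g_1}$ of $y_0$ and $ey_0$, where $y_0$ is the initial point of the reversed copy. Moreover $d(z_0,z_0')\geq\tau(g_1)-3L\geq R$.
\item Pigeonhole then gives $e^{m}=g_1^{m}$ for some $m\neq 0$, i.e.\ $ag_1^{m}a^{-1}=g_1^{-m}$. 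This is equivalent to the coincidence $g_1^{-i_1}ag_1^{-i_1}=g_1^{-i_2}ag_1^{-i_2}$ you were aiming for, so your target relation is right.
\end{itemize}
Making the estimates work has two further requirements. The reversed copy must span $N+1$ periods of $g_1$, which forces $w=[x_0,g_1^{N+1}x_0]$ and $M=M_1(N+1)$. And $x_0$ must be almost minimizing for $g_1$ rather than for $g^M$, so that \cref{lem:TandK} controls $d(x_0,g_1^ix_0)$ for all intermediate $i$ and \cref{lem:l_g} makes $l_{g_1}$ a uniform quasi-geodesic.

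A minor point in Step 2: the bound $|w|\leq M\tau(g)+\delta$ does not follow from almost minimality, since $\inf_z d(z,g^Mz)$ may exceed $\tau(g^M)$. Use $|w|\leq\tau(g^M)+K$ from \cref{lem:TandK} instead; this is exactly what makes $W=K+1$ give $c_w\geq n$.
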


A theorem similar to \cref{th:uniform-lower-bd} was proved in \cite{Cal-Fuj10} in the cases 
of hyperbolic groups and mapping class groups. 
In our setting, a loxodromic element need not admit a geodesic axis, whereas 
such an axis always exists in the situations considered in \cite{Cal-Fuj10} 
and is used in their proof. 
To overcome the lack of axes, we instead use a quasi-geodesic invariant under 
a given loxodromic element, with constants independent of the element.  

For $g\in \Gamma$, we define $l_g \coloneq \bigcup_{i\in \ZZ}g^i [x_0,gx_0]$ 
and parametrize $l_g$ by arc length so that $l_g(0)= x_0$, where 
$x_0\in S$ is a point with $d(x_0, g x_0) - \inf_{z\in S}d(z, gz) \leq \delta$. 

\begin{lem}\label{lem:l_g}
For $g\in \Gamma$ with $\tau(g)\geq \max\{T, 2\}$, the path $l_g$ is a $(2, 2K)$-quasi-geodesic.  
\end{lem}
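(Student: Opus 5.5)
We check the two inequalities defining a $(2,2K)$-quasi-geodesic directly from \cref{lem:TandK}, using that $l_g$ is a concatenation of translates of one geodesic segment. Put $\ell \coloneq d(x_0,gx_0)$. The segments $g^i[x_0,gx_0]$ each have length $\ell$ and join $g^ix_0$ to $g^{i+1}x_0$, so $l_g$ is parametrized by arc length with $l_g(i\ell)=g^ix_0$. By the choice of $x_0$, \cref{lem:TandK} applies (since $\tau(g)\ge T$) and gives $i\tau(g)\le d(x_0,g^ix_0)\le i\tau(g)+K$ for all $i\ge0$. The case $i=1$ yields $\tau(g)\le \ell\le\tau(g)+K$.

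\textbf{Upper bound.} Because $l_g$ is parametrized by arc length, $d(l_g(t),l_g(t'))\le|t-t'|\le 2|t-t'|+2K$ for all $t,t'$.

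\textbf{Lower bound.} Take $t<t'$ and write $t=i\ell+s$ and $t'=j\ell+s'$ with $0\le s,s'<\ell$. The points $l_g(t)$ and $l_g(t')$ lie on the segments starting at $g^ix_0$ and $g^jx_0$.

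First suppose $j\le i+1$. Then $t$ and $t'$ lie in the same segment or in two adjacent segments. In the same segment the path is geodesic, so the distance equals $|t-t'|$. In adjacent segments, $|t-t'|<2\ell$. If $|t-t'|\le 2K$ the bound is trivial. Otherwise the concatenation of two geodesics at $g^{i+1}x_0$ is a $(1,\text{const})$-quasi-geodesic, by the hyperbolic estimate below, and this gives the bound.

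Now suppose $j\ge i+2$. By the triangle inequality,
\[ d(l_g(t),l_g(t'))\ge d(g^{i+1}x_0,g^jx_0)-(\ell-s)-s'. \]
Translating by $g^{-(i+1)}$ and applying \cref{lem:TandK} gives $d(g^{i+1}x_0,g^jx_0)\ge (j-i-1)\tau(g)$. We also have $|t-t'|=(j-i-1)\ell+(\ell-s)+s'$ and $\ell\le\tau(g)+K$. It remains to show
\[ (j-i-1)\tau(g)-(\ell-s)-s' \;\ge\; \tfrac12\bigl((j-i-1)\ell+(\ell-s)+s'\bigr)-2K. \]
This reduces to $(j-i-1)(\tau(g)-\tfrac12\ell)\ge \tfrac32\bigl((\ell-s)+s'\bigr)-2K$. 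The left side is about $\tfrac12(j-i-1)\tau(g)$, while the right side can be as large as about $3\tau(g)$. So this crude bound fails when $j-i$ is small, and I expect the naive estimate to be insufficient there.

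\textbf{Main step: Gromov products at the vertices.} I therefore plan to control the Gromov products at the vertices $g^kx_0$. Because $d(g^{k-1}x_0,g^{k+1}x_0)\ge 2\tau(g)\ge 2\ell-2K$, the Gromov product $(g^{k-1}x_0\,|\,g^{k+1}x_0)_{g^kx_0}$ is at most $K$. By $\delta$-thinness of triangles, a point at distance $u$ before a vertex and a point at distance $v$ after it are then at distance at least $u+v-2K-O(\delta)$.

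For distant segments I would bound $d(l_g(t),l_g(t'))$ below by going through the nearest vertices. I will use \cref{lem:TandK} to see that the endpoint vertices are almost collinear with the intermediate ones, in the sense that $d(g^ix_0,g^jx_0)$ is within $K$ of the sum of the segment lengths up to $(j-i)K$. I will also use that $\tau(g)\ge2$ absorbs constants. The factor $2$ in the statement leaves ample room: the lower bound only requires $d\ge |t-t'|/2-2K$. I expect the resulting constant to fit into $2K$, after absorbing the $\delta$-terms by the choice of $K$ depending on $\delta$ in \cref{lem:TandK}. The main obstacle is this careful bookkeeping of the constants when the two points lie near segment endpoints across a few segments.
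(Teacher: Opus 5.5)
\textbf{There is a gap: the lower bound is not proved.} Your upper bound is fine. Your diagnosis is also correct: comparing with the inner vertices $g^{i+1}x_0$ and $g^jx_0$ throws away the two partial segments. But the lower bound, which is the whole content of the lemma, is never established. The Gromov-product/thin-triangle plan ends with ``I expect the resulting constant to fit into $2K$'', and nothing supports that expectation. A local-to-global argument of this kind has three problems here:
\begin{enumerate}
\item It produces additive errors of order $\delta$ at the breakpoints. Already your adjacent-segment estimate gives $|t-t'|-2K-O(\delta)$, not $|t-t'|-2K$.
\item It typically needs the segments to be long compared with $K+O(\delta)$, which the hypothesis does not provide.
\item You cannot absorb the $\delta$-terms ``by the choice of $K$''. $K$ is the constant already fixed in \cref{lem:TandK}, and the statement asks for exactly $2K$.
\end{enumerate}

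\textbf{The missing idea: compare with the outer vertices.} Use $g^ix_0$ and $g^{j+1}x_0$ instead, so that the partial segments enter with the right sign. Put $n=j-i+1$; then $n=1$ is the same-segment case and $n=2$ the adjacent case. The triangle inequality gives
\[ d(l_g(t),l_g(t')) \ge d(g^ix_0,g^{j+1}x_0) - s - (\ell-s') = d(x_0,g^nx_0) - n\ell + (t'-t). \]
\Cref{lem:TandK} gives $d(x_0,g^nx_0)\ge n\tau(g)$ and $\ell\le\tau(g)+K$, hence $d(l_g(t),l_g(t'))\ge (t'-t)-nK$. This is the near-collinearity you mention in your last paragraph. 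Applied to the outer vertices, it handles all cases at once and needs no thin triangles. Since $t'-t>(n-2)\ell\ge(n-2)\tau(g)$, you obtain
\[ d(l_g(t),l_g(t'))\ge \frac{\tau(g)-K}{\tau(g)}(t'-t)-2K. \]
This is the paper's proof.

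\textbf{The final step needs $\tau(g)\ge 2K$, not $\tau(g)\ge 2$.} Your plan to let ``$\tau(g)\ge 2$ absorb constants'' fails here. To get the multiplicative constant $2$ from the bound above, you need $\tau(g)\ge 2K$. The paper's displayed factor $\frac{\tau(g)-1}{\tau(g)}$ is a slip for $\frac{\tau(g)-K}{\tau(g)}$. This is harmless where the lemma is actually applied, in the proof of \cref{th:uniform-lower-bd}, since there $\tau(g_1)\ge 2K+2$.
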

\begin{proof}
Let $d_0 = d(x_0, gx_0)$. Let $t, t'\in \RR$ with $t'>t$, and 
define integers $i$ and $n$ so that 
$d_0 i \leq t < d_0(i + 1)$ and $d_0(i + n - 1) \leq t' < d_0(i + n)$. 
Then, by \cref{lem:TandK}, we have 
\[\begin{split}
d(l_g(t), l_g(t'))
&\geq d(g^i x_0, g^{i+n} x_0) - d(g^ix_0, l_g(t)) - d(g^{i+n}x_0, l_g(t')) \\
&= d(x_0, g^n x_0) - (t - d_0 i) - (d_0(i+n) - t') \\
&\geq n\tau(g) + (t' - t)  - n(\tau(g) + K) \\
&= (t' - t)  - nK.
\end{split}
\]
On the other hand, we have 
\[ t' - t \geq (n-2)d(x_0,gx_0) \geq (n-2)\tau(g).   \]
Hence  
\[\begin{split}
d(l_g(t), l_g(t'))
&\geq (t' - t)  - nK \\
&= (t' - t)  - (n-2)K - 2K \\
&\geq (t' - t)  - \frac{t' - t}{\tau(g)}K - 2K \\
&= \frac{\tau(g) - 1}{\tau(g)}(t' - t) - 2K,  
\end{split}
\]
which shows that $\alpha$ is a $(\frac{\tau(g)}{\tau(g) - 1}, 2K)$-quasi-geodesic. 
Since $\tau(g) \geq 2$, we have $\frac{\tau(g)}{\tau(g) - 1} \leq 2$, and the proof is complete.  
\end{proof}

The following proposition is a generalization of the claim in \cite{Cal-Fuj10}
that is referred to as ``no reverse counting"  
and play an important role in the proof of \cref{th:uniform-lower-bd}. 

\begin{prop}\label{prop:key-for-uniform-lower-bd}
Suppose that the action of $\Gamma$ on $S$ is acylindrical.
Let $R>0$ and $N\in \ZZ_{>0}$ be constants such that  
\[ \#\{a\in G\mid d(x,ax) \leq 10L+K, d(x',ax') \leq 10L+K \} \leq N  \]
for any $x,x'\in S$ with $d(x,x')\geq R$. 
Let $g\in \Gamma$ with $\tau(g) \geq \max\{ T, 2 , R + 3L\}$, and let
$x_0\in S$ with $d(x_0, g x_0) - \inf_{z\in S}d(z, gz) \leq \delta$.
If there exists a copy of $[x_0, g^{-(N+1)} x_0]$ 
contained in the $L$-neighborhood $V(l_g, L)$ of $l_g$ 
whose orientation agrees with that of $l_g$, then $g$ is achiral. 
\end{prop}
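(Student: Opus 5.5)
Suppose $a[x_0, g^{-(N+1)}x_0] \subset V(l_g, L)$ with orientation agreeing with $l_g$, for some $a\in\Gamma$. The segment $[x_0,g^{-(N+1)}x_0]$ runs backward along the quasi-axis $l_g$ (its endpoints are $x_0$ and $g^{-(N+1)}x_0$, both on $l_g$). Hence the points $a x_0, a g^{-1}x_0, \dots, a g^{-(N+1)}x_0$ lie near $l_g$, and they progress in the \emph{positive} direction of $l_g$. The plan is to produce, from these $N+2$ points, $N+1$ elements that almost fix two far-apart points. Acylindricity then forces two of them to coincide, and a coincidence yields a conjugacy $b g^m b^{-1} = g^{-m}$.

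First, I use \cref{lem:l_g} and the Morse lemma (\cref{MorseLemma}). The images $a l_g$ and $l_g$ are $(2,2K)$-quasi-geodesics, and $a[x_0,g^{-(N+1)}x_0]$ is $L$-close to $l_g$. So each point $a g^{-i}x_0$ ($0\le i\le N+1$) is within roughly $2L$ of a point $l_g(t_i)$. Orientation agreement together with \cref{lem:TandK} gives $t_i$ increasing, with consecutive gaps comparable to $\tau(g)$ up to additive error $K+O(L)$.

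Next, for each $i$ choose $k_i\in\ZZ$ with $g^{k_i}x_0$ close to $l_g(t_i)$, i.e.\ within about $d(x_0,gx_0)/2+L$. This closeness is too crude, so I instead use translation along $l_g$ directly. Consider the elements
\[ b_i \coloneq g^{-k_0}\, g^{i}\, a\, g^{i}, \quad 0\le i\le N. \]
The point is that $g^i$ on the right undoes the backward shift of $a g^{-i}$, and $g^i$ on the left shifts forward by $i$ steps. I will compare the points $x\coloneq x_0$ and $x'\coloneq g x_0$ (or $g^{j}x_0$ for a suitable fixed $j$) with their images under $b_i$. The claim to verify is that $d(x,b_i x)\le 10L+K$ and $d(x',b_ix')\le 10L+K$ for a choice where $d(x,x')\ge R$; the hypothesis $\tau(g)\ge R+3L$ is exactly what makes $d(x_0,gx_0)\ge R$. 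The estimate should follow from two facts. The first is that $a g^{-i}x_0$ and $a g^{-i-1}x_0$ lie near consecutive positions on $l_g$ at distance about $\tau(g)$. The second is that the map "$a$ followed by the shift" acts on the relevant segment of $l_g$ as an orientation-reversing near-isometry. The latter is then pinned down at two points, which forces a bounded displacement. The accumulated error is bounded by a fixed number of Morse constants plus $K$, hence below $10L+K$.

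By acylindricity at most $N$ elements almost fix both $x$ and $x'$, so $b_i=b_j$ for some $i<j$. That is, $g^{i}ag^{i}=g^{j}ag^{j}$, i.e.\ $a g^{i-j} a^{-1}=g^{j-i}$ after rearranging: $g^{-(j-i)} = a^{-1}\cdots$. Precisely, from $g^i a g^i = g^j a g^j$ we get $a\,g^{-(j-i)} = g^{j-i}a$, so $a g^{-m}a^{-1}=g^{m}$ with $m=j-i>0$. Hence $g$ is achiral. The main obstacle is the middle step: bookkeeping the displacement estimate for $b_i$ at two points with constants independent of $g$. The difficulty is that $l_g$ is only a quasi-geodesic through the orbit, and the positions $t_i$ are not exactly multiples of $d(x_0,gx_0)$. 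I would first try choosing $k_i$ adaptively (so the definition of $b_i$ absorbs the drift), and use \cref{lem:TandK} to control $d(x_0,g^n x_0)-n\tau(g)\le K$ uniformly.
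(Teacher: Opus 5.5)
There is a genuine gap, and it is in the step you flag as the main obstacle. First, the estimate you aim for is false as stated: the elements $b_i=g^{-k_0}g^iag^i$ cannot move both $x_0$ and $gx_0$ (or $g^jx_0$) by at most $10L+K$. Computing $b_ix_0$ requires $ag^ix_0$ with $i\ge 1$. You only control $a$ on $[x_0,g^{-(N+1)}x_0]$, so nothing places $ag^ix_0$ near $l_g$.

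More fundamentally, even at controlled points $b_i$ reverses the orientation of $l_g$: as you observe, $a$ does, and the powers of $g$ do not. So $b_i$ behaves like a reflection and cannot nearly fix two points of $l_g$ that are $\tau(g)$ apart. In the model case $S=\RR$, $g(t)=t+1$, $a(t)=c-t$, one gets $b_i(t)=c-k_0-t$ for every $i$. The $b_i$ agree with one another, but none is close to the identity, for any choice of $k_0$.

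Acylindricity should instead be applied to $b_0^{-1}b_i=a^{-1}g^iag^i$. That is, you must show the $b_i$ nearly \emph{agree} at two test points in the controlled region, e.g. $g^{-(N+1)}x_0$ and $g^{-N}x_0$; then $k_0$ and the rounding problem disappear. The paper does the equivalent thing with $e^{-i}g^i=ag^ia^{-1}g^i$, where $e=ag^{-1}a^{-1}$. Its test points are $z_0,z_0'$, the nearest-point projections onto $l_g$ of $ax_0$ and $ag^{-1}x_0$. The slack $3L$ in $\tau(g)\ge R+3L$ is exactly what gives $d(z_0,z_0')\ge R$.

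Second, the displacement estimate itself is not carried out, and the route you sketch would fail. Tracking arc-length positions $t_i$ with consecutive gaps correct up to $K+O(L)$ accumulates an error of order $i(K+O(L))$ for $i\le N$. Also, arc length along $l_g$ is $i\,d(x_0,gx_0)$, which can exceed $d(x_0,g^ix_0)\le i\tau(g)+K$ by an amount linear in $i$. An $N$-dependent bound is useless here: $N$ is produced by acylindricity from $r=10L+K$, so the argument would become circular.

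The paper gets an $i$-independent bound by comparing distances rather than parameters. It projects $e^iy_0=ag^{-i}x_0$ to a point $z_i$ on the geodesic $[z_0,g^iz_0]$, at distance at most $3L$. It then compares $d(z_0,g^iz_0)$ with $d(ax_0,ag^{-i}x_0)=d(x_0,g^ix_0)$. Both lie in $[i\tau(g),i\tau(g)+K]$ by \cref{lem:TandK}, since points of $l_g$ also satisfy the near-minimal displacement hypothesis. This yields $d(g^iz_0,e^iz_0)\le 8L+K$, and similarly for $z_0'$. Your closing algebra, $b_i=b_j\Rightarrow ag^{-m}a^{-1}=g^m$, is correct once such an estimate is in place.
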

\begin{proof}
Let $\beta$ be a copy of $[x_0, g^{-(N+1)} x_0]$ in $V(l_g, L)$ whose orientation agrees with 
that of $l_g$, and write $\beta = h[x_0, g^{-(N+1)}x_0]$ for some $h \in \Gamma$. 
Let $e = hg^{-1}h^{-1}\in \Gamma$, and set $y_0 = hx_0$ and $y_0'  = ey_0\in S$. Note that 
$e^i y_0 = h g^{-i}x_0$ lies on the path $hl_g$ for any $i\in \ZZ$. 
Let $\gamma$ be the subpath of $hl_g$ from $y_0$ to $e^{N+1} y_0$. 
Then $\gamma$ is a $(2, 2K)$-quasi-geodesic 
(and hence a $(2, 3K+3)$-quasi-geodesic) by \cref{lem:l_g}. 
Since $\beta = [y_0, e^{N+1} y_0]$, we have $d_H(\beta, \gamma) \leq L$ by \cref{MorseLemma}.  

Let $z_0$ and $z_0'$ be nearest-point projections onto $l_g$ of $y_0$ and $y_0'$, respectively. Then 
\[ \begin{split}
d(z_0, z_0') 
&\geq d(y_0, y_0') - d(y_0, z_0) - d(y_0', z_0') \\
&\geq d(y_0, e y_0) - L - 2L \\
&\geq \tau(g) - 3L \\
&\geq R.
\end{split}
\]
Now, we prove that $d(z_0, e^{-i}g^i z_0) \leq 10L + K$ for any $i \in \{0,1,\ldots, N\}$. 
Let $i \in \{0,1,\ldots, N\}$. 
Let $z_i$ be a nearest-point projection of $e^iy_0$ onto $[z_0, g^i z_0]$. 
Note that the Hausdorff distance between $[z_0, g^i z_0]$ and 
the subpath of $l_g$ from $z_0$ to $g^i z_0$ is at most $L$ by \cref{MorseLemma}. 
Hence,  
\[ \begin{split}
d(g^iz_0, e^iz_0) 
&\leq d(g^iz_0, z_i) + d(z_i, e^iy_0) + d(e^iy_0, e^iz_0) \\
&\leq d(g^iz_0, z_i) + 3L + d(y_0, z_0) \\
&\leq d(g^iz_0, z_i) + 4L. 
\end{split}
\]
On the other hand, since $z_i$ lies on $[z_0, g^i z_0]$, 
we have $d(g^iz_0, z_i) = |d(z_0, g^iz_0) -d(z_0, z_i)|$. 
First suppose that $d(z_0, g^iz_0) \geq  d(z_0, z_i)$. Then 
\[ \begin{split}
d(g^iz_0, z_i)
&= d(z_0, g^iz_0) - d(z_0, z_i) \\
&\leq d(z_0, g^iz_0) - d(y_0, e^i y_0) + d(y_0, z_0) + d(e^i y_0, z_i) \\
&\leq d(z_0, g^iz_0) - d(x_0, g^i x_0) + 4L \\
&\leq i\tau(g) + K - i\tau(g) + 4L \\
&= K + 4L, 
\end{split}
\]
where we used \cref{lem:TandK} in the last inequality. 
The case $d(g^iz_0, z_i) < d(z_0, z_i)$ is treated similarly, and 
we again obtain $d(g^iz_0, z_i) \leq K + 4L$. Thus 
\[d(g^iz_0, e^i z_0) \leq d(g^iz_0, z_i) + 4L \leq 8L + K \leq 10 L + K,  \]
which shows $d(z_0, e^{-i}g^i z_0) \leq 10L + K$. 
Let $z_i'$ be a nearest-point projection of $e^iy_0'$ onto $l_g$.  
By similar computations, noting that $d(y_0', z_0') \leq 2L$, we obtain 
\[ \begin{split}
d(g^iz_0', e^iz_0') \leq d(g^iz_0', z_i') + 5L \leq 10 L + K, 
\end{split}
\]
and $d(z_0', e^{-i}g^i z_0') \leq 10L + K$ for any $i\in \{0,1,\ldots, N\}$. 

Then, by the definition of $R$ and $N$, there exist $i_1, i_2\in \{0,1,\ldots, N\}$
with $i_1\neq i_2$ such that $e^{-i_1}g^{i_1} = e^{-i_2}g^{i_2}$. 
This implies that $hg^{i_1-i_2}h^{-1} = g^{-(i_1-i_2)}$, and therefore, $g$ is achiral.
\end{proof}

\begin{proof}[Proof of \cref{th:uniform-lower-bd}]
Let $R>0$ and $N\in\ZZ_{>0}$ be constants as in \cref{prop:key-for-uniform-lower-bd}. 
Recall that 
\[ \tau_0 \coloneq \inf\{ \tau(g)\mid \text{$g$ is loxodromic} \} \]
is positive (\cref{th:t_0-is-positive}).  
Let $M_1 \in \ZZ_{>0}$ be an integer such that 
\[ M_1\tau_0 \geq \max\{T,2K+2,R+3L\}. \] 
Let $g\in \Gamma$ be a chiral loxodromic element. Set $g_1 \coloneq g^{M_1}$, and 
let $x_0\in S$ be a point such that $d(x_0, g_1x_0) - \inf_{z\in S}d(z, g_1z) \leq \delta$. 
Then 
$\tau(g_1) \geq {M_1} \tau_0 \geq \max\{T,2K+2,R+3L\}$, and it follows from 
\cref{prop:key-for-uniform-lower-bd} that 
there is no copy of $[x_0, g_1^{-(N+1)} x_0]$ 
contained in $V(l_{g_1}, L)$
whose orientation agrees with that of $l_{g_1}$. 

Put $g_2 \coloneq g_1^{N+1} = g^{M_1(N+1)}$. Let $n\in \ZZ_{>0}$, 
and let $\beta$ be an almost realizing path of 
$c_{[x_0, g_2^{-1} x_0], K+1}([x_0, g_1^{n} x_0])$.  
Then $\beta$ is contained in $V(l_{g_1}, L)$ by \cref{Fuji98_Lemma3.3}, 
and thus$|\beta|_{[x_0, g_2^{-1} x_0]} = 0$. 
This shows that
\[ c_{[x_0, g_2^{-1} x_0], K+1}([x_0, g_1^{n} x_0]) = 0 \]
for any $n \in \ZZ_{>0}$. 

Let $k \in \ZZ_{>0}$ and  
$\gamma = \bigcup_{0\leq i \leq k-1}g_2^i[x_0, g_2 x_0]$. 
Then, by \cref{lem:TandK}, we have 
\[ \begin{split}
|\gamma| - (K+1)|\gamma|_{[x_0, g_2 x_0]} 
&= k d(x_0, g_2 x_0) - (K+1)k  \\
&\leq k(\tau(g_2) + K)- (K+1)k \\
&= k\tau(g_2) - k, 
\end{split}
\]
and 
\[ \begin{split}
c_{[x_0, g_2 x_0]}([x_0, g_2^k x_0])
&\geq d(x_0, g_2^k x_0) - (|\gamma| - W|\gamma|_{[x_0, g_2 x_0]}) \\
&\geq k\tau(g_2) - \left(k\tau(g_2) - k\right) \\
&= k.
\end{split}
\]
Hence 
\[ h_{g_2}(g_2^k) 
= c_{[x_0, g_2 x_0], K+1}([x_0, g_2^k x_0]) 
- c_{[x_0, g_2^{-1} x_0], K+1}([x_0, g_1^{(N+1)k} x_0])
\geq k, 
\]
which means that $\overline{h_{g_2}}(g_2) \geq 1$. 
Therefore, letting $M \coloneq M_1(N+1)$, we obtain 
\[ \overline{h_{g^M}}(g)  
=\frac{1}{M}\overline{h_{g^M}}(g^M) 
=\frac{1}{M}\overline{h_{g_2}}(g_2) \geq \frac{1}{M}   
\]
as desired. 
\end{proof}

\subsection{General statements}

The following is a general statement of the gap theorems: 

\begin{thm}\label{Gap_thm: general}
Let $\Gamma$ be a group that admits 
an acylindrical action on a hyperbolic space, 
$M\in\ZZ_{>0}$ the constant as in \cref{th:uniform-lower-bd}, 
and  $A\subset\Gamma$ a symmetric subset containing every element of finite order. 

Suppose that there exists a positive constant $C$ such that 
$\sup\{ |\overline{h_{g^M}}(a)| \mid a\in A \} \leq C$
for any chiral loxodromic element $g\in\Gamma$. 

\begin{enumerate}
\item
A loxodromic element $g\in \Gamma$ is achiral if and only if $\overline{\nu_A}(g)=0$. 
\item 
There exists a positive constant $\Theta$ 
such that $\overline{\nu_A}(g)\ge \Theta$ for any chiral loxodromic element $g\in\Gamma$. 
\end{enumerate}
\end{thm}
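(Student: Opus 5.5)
The plan is to derive (2) first from \cref{basic-ineq} and \cref{th:uniform-lower-bd}, and then obtain (1) by combining (2) with \cref{achiral-WPD-normzero}.

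For (2), let $g\in\Gamma$ be a chiral loxodromic element and consider the homogeneous quasimorphism $\psi \coloneq \overline{h_{g^M}}$. By \cref{prop:upper-bdd-of-defect}, applied to the loxodromic element $g^M$, its defect is at most $D$, and $D$ depends only on $\delta$. The hypothesis $\tau(g^M)\ge 2K+2$ needed to define $h_{g^M}$ holds because of the choice of $M$ in the proof of \cref{th:uniform-lower-bd}: we have $M\tau_0\ge M_1\tau_0\ge 2K+2$. By the assumption of the theorem, $|\psi(a)|\le C$ for all $a\in A$. Then \cref{basic-ineq} gives
\[
\overline{\nu_A}(g)\ \ge\ \frac{|\overline{h_{g^M}}(g)|}{D+C}\ \ge\ \frac{1}{M(D+C)},
\]
where the last inequality is \cref{th:uniform-lower-bd}. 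So $\Theta\coloneq 1/(M(D+C))$ works, and it is independent of $g$.

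One bookkeeping point needs checking: \cref{basic-ineq} must be valid for the extended stable norm $\overline{\nu_A}:\Gamma\to\RR_{\ge0}\cup\{\infty\}$. If no power of $g$ lies in $\langle A\rangle$, then $\overline{\nu_A}(g)=\infty$ and the bound is trivial. Otherwise $g^m\in\langle A\rangle$ for some $m>0$. Writing $g^{mk}$ as a product of $\nu_A(g^{mk})$ elements of $A$ and using homogeneity, the defect bound gives $mk|\psi(g)|\le \nu_A(g^{mk})(C+D)$. Dividing by $mk$ and letting $k\to\infty$ yields the inequality, consistently with the definition $\overline{\nu_A}(g)=\overline{\nu_A}(g^m)/m$.

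For (1), suppose first that $g$ is achiral and loxodromic. Since the action is acylindrical, every loxodromic element is WPD, as remarked after \cref{def: WPD}. Hence \cref{achiral-WPD-normzero} gives $\overline{\nu_A}(g)=0$; this uses that $A$ is symmetric and contains all elements of finite order. Conversely, if $g$ is loxodromic and chiral, then (2) gives $\overline{\nu_A}(g)\ge\Theta>0$. Thus $\overline{\nu_A}(g)=0$ if and only if $g$ is achiral.

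The only delicate point is the uniformity of $\Theta$. It rests on three facts: $D$ depends only on $\delta$, $M$ depends only on $\delta$ and the acylindricity parameters, and the constant $C$ in the hypothesis is uniform over all chiral loxodromic $g$. All three are supplied by earlier results or by the hypothesis, so no real obstacle remains.
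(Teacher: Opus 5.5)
Your proposal is correct and follows the paper's proof essentially verbatim. You set $\Theta=1/(M(D+C))$ and, for chiral loxodromic $g$, combine \cref{prop:upper-bdd-of-defect}, the bound $\overline{h_{g^M}}(g)\ge 1/M$ from \cref{th:uniform-lower-bd}, the hypothesis and \cref{basic-ineq}; for achiral loxodromic $g$ you invoke \cref{achiral-WPD-normzero}. Your two extra checks are details the paper leaves implicit, and both are handled correctly: that $\tau(g^M)\ge 2K+2$, so $h_{g^M}$ is defined, and that \cref{basic-ineq} remains valid for the extended $\overline{\nu_A}$ when only a power of $g$ lies in $\langle A\rangle$.
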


\begin{proof}
Let $D$ (resp. $M$) be the constant as in \cref{prop:upper-bdd-of-defect} 
(resp. \cref{th:uniform-lower-bd}), 
and set $\Theta\coloneqq\frac{1}{M(D+C)}>0$. 
By \cref{achiral-WPD-normzero}, it suffices to show that, for any chiral loxodromic element $g\in\Gamma$, 
we have $\overline{\nu_A}(g)\ge\Theta$. 

By the above arguments, we have the followings:
\begin{itemize}
\item
$D_{\overline{h_{g^M}}}\le D$ (\cref{prop:upper-bdd-of-defect}), 
\item
$\overline{h_{g^M}}(g)\ge\frac{1}{M}$ (\cref{th:uniform-lower-bd}), 
\item
$\sup\{ |\overline{h_{g^M}}(a)| \mid a\in A \} \leq C$ (Assumption). 
\end{itemize}
Therefore, by \cref{basic-ineq}, we have
\[
\overline{\nu_A}(g)
\ge\frac{|\overline{h_{g^M}}(g)|}{D_{\phi_g}+C}
\ge\frac{1}{M(D+C)}=\Theta, 
\]
which completes the proof. 
\end{proof}

For examples of acylindrically hyperbolic groups, see \cite[\S 2.3]{OsinICM}.
The automorphism groups of certain algebraic varieties considered below are geometrically finite Kleinian groups, and hence non-elementary relatively hyperbolic groups with proper peripheral subgroups; these form a basic class of examples of acylindrically hyperbolic groups. 

\subsubsection{scl}

Let $\Gamma$ be a group and let $C(\Gamma)$ denote the symmetric subset consisting of all commutators in $\Gamma$. 
The associated norm $\nu_{C(\Gamma)}$ is called the \emph{commutator norm} (or \emph{commutator length}), 
and we adopt the standard notation $\scl:=\scl_\Gamma:=\overline{\nu_{C(\Gamma)}}$ for its stabilization. 
The following is the gap theorem for $\scl$. 

\begin{thm}\label{Gap_thm: scl}
Let $\Gamma$ be a group that admits 
an acylindrical action on a $\delta$-hyperbolic space. 
\begin{enumerate}
\item
A loxodromic element $g\in \Gamma$ is achiral if and only if $\scl(g)=0$. 
\item 
There exists a positive constant $\Theta$ 
such that $\scl(g)\ge \Theta$ for any chiral loxodromic element $g\in\Gamma$. 
\end{enumerate}
\end{thm}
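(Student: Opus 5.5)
We will deduce \cref{Gap_thm: scl} directly from \cref{Gap_thm: general} with $A = C(\Gamma)$. The plan is to check the two hypotheses of that theorem, namely that $A$ contains every element of finite order and that the quasimorphisms $\overline{h_{g^M}}$ are uniformly bounded on $A$.

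Note first that \cref{Gap_thm: general} asks $A$ to contain every element of finite order, and $C(\Gamma)$ need not do so. We handle this in one of two ways.

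\emph{Option 1: enlarge $A$ and compare.} Set $A' = C(\Gamma)\cup\{\text{elements of finite order}\}$, which is symmetric. The two inequalities go as follows.
\begin{itemize}
\item Since $A'\supset C(\Gamma)$, we have $\overline{\nu_{A'}}\le \scl$. So the lower bound $\overline{\nu_{A'}}(g)\ge\Theta$ for chiral loxodromic $g$ gives $\scl(g)\ge\Theta$.
\item For the converse in (i) we need $\scl(g)=0$ whenever $g$ is achiral and loxodromic. Take $ba^mb^{-1}=a^{-m}$. Then $a^{2m}=a^m\,b a^{-m} b^{-1}$ is a single commutator. Moreover $a^{2mk}=[a^{mk},b]$ for every $k$, so $\scl(a)=0$ directly. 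This avoids \cref{achiral-WPD-normzero} entirely.
\item Conversely, if $\scl(g)=0$, then $\overline{\nu_{A'}}(g)=0$, and the gap in (ii) forces $g$ to be achiral.
\end{itemize}

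\emph{Option 2: apply \cref{basic-ineq} directly.} Use the argument of \cref{Gap_thm: general} with $A=C(\Gamma)$, applying \cref{basic-ineq} straight away. Its proof only used the finite-order hypothesis through \cref{achiral-WPD-normzero}, and the commutator identity above replaces that step.

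The remaining task is the uniform bound $\sup_{a\in C(\Gamma)}|\overline{h_{g^M}}(a)|\le C$. This holds with $C$ equal to the defect bound $D$ of \cref{prop:upper-bdd-of-defect}, because a homogeneous quasimorphism $\psi$ is conjugation invariant and satisfies $|\psi([x,y])|\le D_\psi$. The standard argument is as follows.
\begin{itemize}
\item Homogeneity gives conjugation invariance: $\psi(yx^{-1}y^{-1})=\frac1n\psi(yx^{-n}y^{-1})$, and the error from splitting into three factors is at most $2D_\psi/n\to 0$. Hence $\psi(yx^{-1}y^{-1})=-\psi(x)$.
\item Then $|\psi(x\cdot yx^{-1}y^{-1})-\psi(x)-\psi(yx^{-1}y^{-1})|\le D_\psi$ gives $|\psi([x,y])|\le D_\psi$. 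This is also \cite[Lemma 2.24]{scl}.
\end{itemize}
Since $D_{\overline{h_{g^M}}}\le D$, and $D$ depends only on $\delta$, we may take $C=D$ uniformly in $g$. Feeding this into \cref{Gap_thm: general} gives $\Theta=\frac{1}{2MD}$.

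The only delicate point is the finite-order hypothesis in \cref{Gap_thm: general}, which is handled by the comparison $\overline{\nu_{A'}}\le\scl$ together with the explicit commutator identity for achiral elements. Everything else is formal.
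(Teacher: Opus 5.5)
Your proposal is correct, and your Option 2 is essentially the paper's own proof. The identity $a^{2mk}=[a^{mk},b]$ is the ``direct computation'' showing $\scl(a)=0$ for achiral $a$, and the bound $|\overline{h_{g^M}}([x,y])|\le D_{\overline{h_{g^M}}}\le D$ supplies the uniform estimate needed to run the argument of \cref{Gap_thm: general} with $C=D$. If you use Option 1 instead, you must also bound $\overline{h_{g^M}}$ on the added finite-order elements; this is immediate, since a homogeneous quasimorphism vanishes on torsion elements.
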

\begin{proof}
Note that, since the subset $C(\Gamma)$ 
does not, in general, contain all finite order elements, 
we cannot apply \cref{achiral-WPD-normzero}. 
But in this case, direct computations imply $\scl(a)=0$ for any achiral element $a\in\Gamma$. 

The assertion follows from the proof of \cref{Gap_thm: general} and 
$|\overline{h_{g^M}}(a)|\le D_{\overline{h_{g^M}}}\le D$ for any $a\in C(\Gamma)$. 
\end{proof}

\begin{rem}
\begin{enumerate}
\item
Originally, the seminal work of Calegari--Fujiwara \cite{Cal-Fuj10} established the gap theorem for $\scl$ for hyperbolic groups and for pseudo-Anosov elements of mapping class groups.
Furthermore, in \cite[Theorems A and B]{BBF}, 
Bestvina--Bromberg--Fujiwara proved, as a generalization of \cite{Cal-Fuj10}, 
the gap theorem for $\scl$ for arbitrary elements of mapping class groups.

\item
We also note that if $\Gamma$ satisfies the gap theorem for $\scl_\Gamma$, 
then every subgroup $H \subset \Gamma$ also satisfies the gap theorem for $\scl_H$.
For example, geometrically infinite subgroups of (non-elementary) geometrically finite Kleinian groups satisfy the gap theorem for $\scl$.
\end{enumerate}
\end{rem}


\subsubsection{Translation-length norms}

We show the gap theorem for stable norms associated to the translation length. 

Let $\Gamma$ be a group admitting 
an acylindrical action on a $\delta$-hyperbolic space $S$. 
For any $l\in\RR_{\ge0}$, 
we define a symmetric subset 
\[
T_l(\Gamma):=\{g\in\Gamma\ \mid \tau_S(g)\le l \}
\]
of $\Gamma$, 
and set $\nu_l\coloneqq\nu_{T_l(\Gamma)}$ which is called the \emph{l-translation-length norm} in this paper.

\begin{thm}\label{Gap_thm: trans_length}
Let $l\in\RR_{\ge0}$. 
\begin{enumerate}
\item
A loxodromic element $g\in \Gamma$ is achiral if and only if $\overline{\nu_l}(g)=0$. 
\item
There exists a positive constant $\Theta$ 
such that $\overline{\nu_l}(g)\ge \Theta$ for any chiral loxodromic element $g\in\Gamma$. 
\end{enumerate}
\end{thm}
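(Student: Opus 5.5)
This is a direct application of \cref{Gap_thm: general} with $A = T_l(\Gamma)$. First, $T_l(\Gamma)$ is symmetric, since $\tau(g^{-1}) = \tau(g)$. It also contains every element of finite order, since such elements have translation length $0 \le l$. So the only hypothesis left to check is the uniform bound: we need a constant $C$, independent of the chiral loxodromic $g$, with
\[ \sup\{ |\overline{h_{g^M}}(a)| \mid a\in T_l(\Gamma) \} \leq C . \]

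The plan is to bound the homogenized Epstein--Fujiwara quasimorphism by translation length, uniformly in $g$. For any $w$, $W$, $x_0$, the counting function satisfies
\[ 0 \le c_{w,W}([x,y]) \le d(x,y), \]
because the infimum in (\ref{def_count_func}) is at least $0$. Indeed, each disjoint copy of $w$ inside $\alpha$ contributes length $|w| > W$, so $|\alpha| - W|\alpha|_w \ge 0$. Hence
\[ |h_{w,W,x_0}(a)| \le d(x_0, a x_0). \]
Applying this to $a^n$, dividing by $n$ and letting $n\to\infty$ gives
\[ |\overline{h_{g^M}}(a)| \le \lim_{n\to\infty} \frac{d(x_0,a^nx_0)}{n} = \tau(a) \le l \]
for every $a\in T_l(\Gamma)$. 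This bound does not depend on $g$ or on the base point chosen for $h_{g^M}$.

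We therefore take $C = \max\{l,1\}$; any positive $C \ge l$ works, and this covers the case $l=0$, where the bound is $0$ but \cref{Gap_thm: general} wants $C>0$. Then \cref{Gap_thm: general} gives both (i) and (ii) with $\Theta = \frac{1}{M(D+C)}$.

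For (i), the reverse direction is that achiral loxodromic elements have $\overline{\nu_l}=0$. This comes from \cref{achiral-WPD-normzero}, using that loxodromic elements of an acylindrical action are WPD, and \cref{Gap_thm: general} already packages this. One point to check is that $\overline{\nu_l}$ is finite. Elements outside $\langle T_l(\Gamma)\rangle$ get the value $\infty$ by the convention above, and the lower bound from \cref{basic-ineq} still holds trivially for them, so nothing breaks.

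The only genuinely substantive step is the inequality $|\overline{h}(a)| \le \tau(a)$, and it is elementary. I expect no real obstacle beyond confirming the nonnegativity of $|\alpha| - W|\alpha|_w$, which needs $W < |w|$ and the copies of $w$ to be disjoint subpaths.
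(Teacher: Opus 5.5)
Your proposal is correct and matches the paper's proof: the paper also applies \cref{Gap_thm: general} with $A=T_l(\Gamma)$, and its \cref{lem:uniform-bd} bounds $|\overline{h_{w,W,x_0}}(a)|$ by $\lim_n d(x_0,a^nx_0)/n=\tau(a)\le l$, exactly as you do via $0\le c_{w,W}\le d$. Your choice $C=\max\{l,1\}$ is a harmless extra precaution for $l=0$, where the paper's bound gives $C=0$; that value works anyway, since $D>0$ keeps $\Theta=\frac{1}{M(D+C)}$ finite and positive.
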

\begin{proof}
The assertion follows from \cref{Gap_thm: general} and the following lemma. 
\end{proof}

\begin{lem}\label{lem:uniform-bd}
For any $l\in\RR_{\ge0}$, a path $w$ 
of finite length,
and a real number $W$ with $0<W<|w|$, 
we have 
\[
\sup\{|\overline{h_{w,W,x_0}}(a)|~|~a\in T_l(\Gamma)\}\le l. 
\]
\end{lem}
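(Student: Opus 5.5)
The plan is to bound the un-homogenized quasimorphism on powers of $a$ by the displacement of the base point, and then let homogenization remove every additive error. Write $h = h_{w,W,x_0}$. From the definition \eqref{def_count_func}, with $\alpha$ a geodesic from $x$ to $y$ (so $|\alpha|_w\ge 0$), we get $0\le c_{w,W}([x,y]) \le d(x,y) - \inf_\alpha\{|\alpha| - W|\alpha|_w\}$. Moreover, $\inf_\alpha\{|\alpha|-W|\alpha|_w\}\ge 0$ whenever $W<|w|$, because each disjoint copy of $w$ inside $\alpha$ contributes length $|w|>W$. Hence
\[ 0\le c_{w,W}([x,y])\le d(x,y), \qquad 0\le c_{w^{-1},W}([x,y])\le d(x,y). \]
It follows that $|h(b)|\le d(x_0,bx_0)$ for every $b\in\Gamma$.

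Next, apply this to $b=a^n$ and use the definition of the homogenization:
\[ |\overline{h}(a)| = \lim_{n\to\infty}\frac{|h(a^n)|}{n}\le \lim_{n\to\infty}\frac{d(x_0,a^nx_0)}{n} = \tau(a). \]
This is at most $l$ for every $a\in T_l(\Gamma)$. Taking the supremum over $a \in T_l(\Gamma)$ gives the claim. No constants depending on $w$ or $W$ enter the bound, which matches the statement.

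The only step that needs care is the nonnegativity of the infimum, i.e.\ $|\alpha|\ge W|\alpha|_w$. It holds because the copies counted in $|\alpha|_w$ are disjoint subpaths of $\alpha$, each of length $|w|$, so $|\alpha|\ge |w|\,|\alpha|_w\ge W|\alpha|_w$. Everything else is immediate, so I expect no real obstacle here. This lemma then feeds into \cref{Gap_thm: general} with $C=l$; if $l=0$ one can instead take any positive $C$, since the supremum is $0$.
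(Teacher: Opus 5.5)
Your proposal is correct and follows the paper's own proof: bound $|h_{w,W,x_0}(a^n)|$ by $\max\{c_{w,W},c_{w^{-1},W}\}([x_0,a^nx_0])\le d(x_0,a^nx_0)$, then divide by $n$ and pass to the limit to get $\tau(a)\le l$. The only addition is that you justify $0\le c_{w,W}([x,y])\le d(x,y)$ explicitly, via a geodesic competitor and the inequality $|\alpha|\ge |w|\,|\alpha|_w\ge W|\alpha|_w$; the paper uses this bound without comment.
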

\begin{proof}
The assertion follows from the computation
\begin{eqnarray*}
|\overline{h_{w,W,x_0}}(a)|
&=&\lim_{n\to \infty}\left|\frac{h_{w,W,x_0}(a^n)}{n}\right|\\
&\le&\lim_{n\to \infty}\frac{\max\{c_{w,W}([x_0,a^nx_0]),c_{w^{-1},W}([x_0,a^nx_0])\}}{n}\\
&\le&\lim_{n\to \infty}\frac{d(x_0,a^n x_0)}{n}
=\tau(a)\le l.
\end{eqnarray*}
\end{proof}

We apply this theorem for gap theorems for entropy norms in the following two subsections. 

\begin{rem}
In fact, the two gap theorems, Theorems \ref{Gap_thm: general} and \ref{Gap_thm: scl}, can also be proved by using the Bestvina--Bromberg--Fujiwara quasimorphisms constructed from actions on quasi-trees \cite{BBF}, in the same way as in our proofs. 
Uniform estimates for these quasimorphisms, corresponding to our \cref{prop:upper-bdd-of-defect} 
and \cref{th:uniform-lower-bd}, can be found in \cite[Corollary 3.2]{BBF}.

On the other hand, we note that the gap theorem for the translation-length norm (\cref{Gap_thm: trans_length}) was proved by using Epstein--Fujiwara quasimorphisms,  
because we required their property established in \cref{lem:uniform-bd}.  
\end{rem}

\subsection{Entropy norms on mapping class groups}
\label{subsection: gap-ent-MCG}

Let $\Sigma$ be a closed orientable surface of genus at least two, and let
$\MCG(\Sigma)$ denote its mapping class group.
By the Nielsen--Thurston classification, every mapping class is either
periodic, reducible, or pseudo-Anosov.
A mapping class is \emph{periodic} if it has finite order, and \emph{reducible} if it
preserves the isotopy class of an essential multicurve.
A mapping class that is neither periodic nor reducible is called
\emph{pseudo-Anosov}; such mapping classes exhibit rich dynamical behavior
described in terms of invariant measured foliations.

For a mapping class $f\in \MCG(\Sigma)$, its topological entropy is defined as
the infimum of the topological entropies of its representatives.
We note that positive topological entropy is not equivalent to being
pseudo-Anosov.
More precisely, a mapping class has positive topological entropy if and only if
it is either pseudo-Anosov or reducible with at least one pseudo-Anosov
component in its Nielsen--Thurston decomposition.
For background on the Nielsen--Thurston classification and pseudo-Anosov
theory, see \cite[Chapters~13 and 14]{PrimerMCG}; for the entropy statements,
see \cite[Expos\'e 1, Theorems 1.6 and 1.8]{FLP-English}.

For any $l\in\RR_{\ge0}$,
we define the symmetric subset
\[
E_l(\Sigma):=\{g\in\MCG(\Sigma)\mid h_{\mathrm{top}}(g)\le l\}
\]
of $\MCG(\Sigma)$,
and set $e_l\coloneqq \nu_{E_l(\Sigma)}$, which we call the \emph{$l$-entropy norm}.


\begin{rem}
The entropy norm was introduced by Brandenbursky--Marcinkowski in \cite{Brandenbursky-Marcinkowski2019} and has since been studied in \cite{Brandenbursky-Kabiraj2020} and \cite{Brandenbursky-Shelukhin2021}.
Their entropy norm is precisely the special case $l=0$ of ours.
\end{rem}

The curve graph $\C(\Sigma)$ is $\delta$-hyperbolic, where $\delta$ depends only on $\Sigma$, 
and an element of $\MCG(\Sigma)$ acts loxodromically on $\C(\Sigma)$ if and only if it is pseudo-Anosov \cite{Masur-Minsky}.
Furthermore, the action of $\MCG(\Sigma)$ on $\C(\Sigma)$ is acylindrical \cite{Bowditch2008}, see also \cite[Lemma 6.49]{DGO17}.

\cref{Gap_thm: trans_length} yields the gap theorem for the stable entropy norm:

\begin{thm}\label{Gap_thm: ent_MCG}
Let $\Sigma$ be a closed orientable surface of genus at least two,
and let $l\in\RR_{\ge0}$. 
\begin{enumerate}
\item
An pseudo-Anosov element $g\in\MCG(\Sigma)$ is achiral if and only if $\overline{e_l}(g)=0$. 
\item
There exists a positive constant $\Theta$ 
depending only on the genus of $\Sigma$ and $l$,
such that $\overline{e_l}(g)\ge \Theta$ for any chiral pseudo-Anosov element $g\in\MCG(\Sigma)$. 
\end{enumerate}
\end{thm}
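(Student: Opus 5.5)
The plan is to deduce the statement from \cref{Gap_thm: general}, applied to the acylindrical action of $\MCG(\Sigma)$ on the curve graph $\C(\Sigma)$ with $A = E_l(\Sigma)$. Recall that this graph is $\delta$-hyperbolic with $\delta$ depending only on $\Sigma$, that the action is acylindrical, and that its loxodromic elements are exactly the pseudo-Anosov classes. The hypotheses on $A$ are easy to check. $E_l(\Sigma)$ is symmetric because $h_{\mathrm{top}}(g^{-1}) = h_{\mathrm{top}}(g)$. It contains every element of finite order because periodic mapping classes have entropy $0 \le l$.

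The remaining hypothesis of \cref{Gap_thm: general} is a uniform bound on $|\overline{h_{g^M}}(a)|$ for $a \in E_l(\Sigma)$. By \cref{lem:uniform-bd}, which holds for every Epstein--Fujiwara quasimorphism and hence for each $h_{g^M}$, it suffices to show
\[
E_l(\Sigma) \subset T_{l'}(\MCG(\Sigma))
\]
for some $l'$ depending only on $\Sigma$ and $l$. Here translation length is computed in $\C(\Sigma)$. We then take $C = \max\{l', 1\}$, so that $C > 0$ even when $l = 0$.

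To prove this inclusion, split $a \in E_l(\Sigma)$ according to the Nielsen--Thurston classification.
\begin{enumerate}
\item If $a$ is periodic, its orbits in $\C(\Sigma)$ are bounded, so $\tau(a) = 0$.
\item If $a$ is reducible, it preserves an essential multicurve $\mu$. Then $a^k$ fixes each curve of $\mu$ for some $k > 0$, so $a^k$ has a fixed vertex and $\tau(a) = \tau(a^k)/k = 0$.
\item If $a$ is pseudo-Anosov, then $h_{\mathrm{top}}(a) = \log \lambda(a)$, where $\lambda(a)$ is the stretch factor, and $\log\lambda(a)$ equals the translation length of $a$ on Teichm\"uller space with the Teichm\"uller metric. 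By Masur--Minsky, the systole map from Teichm\"uller space to $\C(\Sigma)$ is coarsely Lipschitz: $d_{\C}(\pi(x),\pi(y)) \le \kappa\, d_{\mathcal T}(x,y) + \kappa$ for some $\kappa = \kappa(\Sigma)$. This map is also $\MCG$-equivariant up to bounded error. Taking $x$ on the Teichm\"uller axis of $a$ and applying the inequality to $y = a^n x$, then dividing by $n$, gives $\tau_{\C}(a) \le \kappa \log\lambda(a) \le \kappa l$.
\end{enumerate}
Hence $l' = \kappa l$ works.

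With these hypotheses verified, \cref{Gap_thm: general} gives both parts. For (i), one direction is \cref{achiral-WPD-normzero}, since loxodromic elements of an acylindrical action are WPD. The other direction is part (ii), which gives $\overline{e_l}(g) \ge \Theta > 0$ for every chiral pseudo-Anosov $g$. The constant is $\Theta = 1/(M(D+C))$. Here $D$ and $M$ depend only on $\delta$ and the acylindricity parameters, hence only on the genus, and $C$ depends only on the genus and $l$.

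The main obstacle is the comparison in case (iii): we need a bound on curve-graph translation length in terms of entropy that is uniform over all pseudo-Anosov classes. The plan relies on the coarse Lipschitz property of the projection from Teichm\"uller space, cited from Masur--Minsky. A fallback would avoid this by using that there are only finitely many conjugacy classes of pseudo-Anosov maps with $\lambda \le e^l$ (Ivanov). Since $\tau$ is conjugation invariant, taking $l'$ to be the maximum of $\tau$ over these finitely many classes gives the same inclusion.
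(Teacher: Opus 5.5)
Your proof is correct. Its skeleton matches the paper's: both reduce everything to an inclusion $E_l(\Sigma)\subset T_{l'}(\MCG(\Sigma))$ for the curve-graph action, then run the Epstein--Fujiwara machinery through \cref{lem:uniform-bd}. The paper packages that last step as $\overline{e_l}\ge\overline{\nu_{l'}}$ plus \cref{Gap_thm: trans_length}; you apply \cref{Gap_thm: general} directly with $A=E_l(\Sigma)$ and $C=\max\{l',1\}$, which is the same argument unwound.

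The real difference is how you bound $\tau_{\C}$ on pseudo-Anosov classes of entropy at most $l$. The paper uses exactly your fallback: Ivanov's finiteness of conjugacy classes of pseudo-Anosov maps with bounded dilatation, together with conjugation invariance of $\tau$. Your main route instead combines Bers' identification of $\log\lambda(a)$ with the Teichm\"uller translation length and the Masur--Minsky coarse Lipschitz property of the systole map.

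\begin{itemize}
\item Your route gives a uniform linear inequality $\tau_{\C}(a)\le\kappa\, h_{\mathrm{top}}(a)$ over all pseudo-Anosov classes. This yields an explicit $l'=\kappa l$, and hence an explicit dependence of $\Theta=1/(M(D+\max\{\kappa l,1\}))$ on $l$.
\item Ivanov's argument is softer: it only shows that some $l'$ exists. In exchange, it needs no comparison between the Teichm\"uller metric and the curve-graph metric.
\end{itemize}
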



\begin{proof}
Since every finite-order (i.e. periodic) element has zero topological entropy,
\cref{achiral-WPD-normzero} implies that $\overline{e_l}(g)=0$
for every achiral pseudo-Anosov element $g\in\MCG(\Sigma)$.

Consider the action of $\MCG(\Sigma)$ on the curve graph.
Since every periodic or reducible element has zero translation length,
we have $E_0(\Sigma)\subset T_0(\MCG(\Sigma))$.
Since there are only finitely many conjugacy classes of pseudo-Anosov elements
$g$ with $h_{\mathrm{top}}(g)\le l$ (\cite{Iva90}),
we have
\[
E_l(\Sigma)\setminus E_0(\Sigma)\subset T_{l'}(\MCG(\Sigma))
\]
for some $l'\in\RR_{\ge0}$ whenever $l>0$.
Thus, we obtain
$
\overline{e_l}(h)\ge \overline{\nu_{l'}}(h)
$
for any $h\in\MCG(\Sigma)$.
The assertion then follows from the gap theorem for $\overline{\nu_{l'}}$
(\cref{Gap_thm: trans_length}).
\end{proof}

\subsection{Entropy norms on automorphism groups}
\label{subsection: gap-ent-algvar}

Let $X$ be a K3 surface or an Enriques surface over an algebraically closed field, or a projective IHS manifold. 

Recall that we defined the entropy of an automorphism of $X$ and 
observed the natural action of $\Aut(X)$ on the associated hyperbolic space $\HH_X$ in 
\S\ref{ss:surfaces} and \S\ref{ss:IHS}. 
For any $l\in\RR_{\ge0}$, the \emph{$l$-entropy norm} $e_l$ on $\Aut(X)$ is defined in the same way as for mapping class groups (see \S\ref{subsection: gap-ent-MCG}).
In general, the action of $\Aut(X)$ on $\HH_X$ is not acylindrical,
since $\Aut(X)$ may contain parabolic elements.
Nevertheless, \cref{Gap_thm: trans_length} yields the gap theorem for the stable entropy norm
when combined with the geometrical finiteness of $\Aut(X)$ and 
the acylindrical action of a geometrically finite Kleinian group on a certain hyperbolic graph 
constructed in \cite{Osin}. 

\begin{thm}\label{Gap_thm: ent_Aut}
Let $X$ be a K3 surface or an Enriques surface over an algebraically closed field of characteristic different from $2$, or a projective irreducible holomorphic symplectic manifold, and let $l\in\RR_{\ge0}$. 
\begin{enumerate}
\item
An automorphism $g\in \Aut(X)$ of positive entropy is achiral if and only if $\overline{e_l}(g)=0$. 
\item
There exists a positive constant $\Theta$ 
such that $\overline{e_l}(g)\ge \Theta$ for any chiral automorphism $g\in\Aut(X)$
of positive entropy.  
\end{enumerate}
\end{thm}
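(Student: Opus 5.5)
We reduce the statement to \cref{Gap_thm: trans_length}, following the proof of \cref{Gap_thm: ent_MCG}, but with the action on $\HH_X$ replaced by an acylindrical action on a hyperbolic graph.

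\textbf{Step 1: an acylindrical action.} First, $\Aut(X)$ acts on $\HH_X$ through $\varrho$. By \cref{prop:finite_kernel} for K3 and Enriques surfaces, and by the analogous finiteness of the kernel of $\Aut(X)\to \mathrm{O}(H^2(X,\ZZ))$ for IHS manifolds, $\varrho$ has finite kernel. Its image $G$ is a geometrically finite Kleinian group acting on $\HH_X$; this is the result of \cite{Kikuta,Takatsu}, and it is where the hypothesis on the characteristic is used. The peripheral subgroups of $G$ are the stabilizers of the cusps, and they consist of elliptic and parabolic elements. Osin's construction \cite{Osin} gives a hyperbolic graph $S$, namely a coned-off Cayley graph relative to the peripheral subgroups, on which $G$, and hence $\Aut(X)$, acts acylindrically. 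Under this action, the loxodromic elements of $G$ on $S$ are exactly the elements of $G$ that are loxodromic on $\HH_X$. Elements conjugate into a peripheral subgroup, and elements of finite order, are elliptic on $S$.

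Acylindricity for $\Aut(X)$ follows from acylindricity for $G$ because the kernel of $\varrho$ is finite: the count $N$ is multiplied by at most the order of the kernel. Chirality also transfers. If $g$ is chiral in $\Aut(X)$ and loxodromic, then $\varrho(g)$ is chiral in $G$, because a relation $b\varrho(g)^m b^{-1}=\varrho(g)^{-m}$ lifts to $\tilde b g^m\tilde b^{-1}=g^{-m}k$ with $k$ in the finite kernel. Raising to a suitable power and using that $\ker\varrho$ is normal and finite gives $\tilde b g^{mn}\tilde b^{-1}=g^{-mn}$ for some $n$. Working directly with $\Aut(X)$ acting on $S$ through $\varrho$ avoids this issue altogether.

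\textbf{Step 2: comparing entropy with translation length on $S$.} By \cref{prop:trichotomy} and \cref{lem:tl_and_ent:HK}, an automorphism has zero entropy exactly when it is elliptic or parabolic on $\HH_X$. Such an element lies in $T_0$ for the action on $S$, since parabolics fix a cusp and so lie in a peripheral subgroup. This gives $E_0\subset T_0(\Aut(X))$.

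For $l>0$ we need $E_l\setminus E_0\subset T_{l'}$ for some $l'$. Entropy is translation length on $\HH_X$ up to a constant factor (\cref{lem:tl_equals_ent}, \cref{lem:tl_and_ent:HK}). The spectral radius $\lambda(g_*)$ is a Salem number or quadratic unit of bounded degree $\le\rho$, so only finitely many values of $\lambda$ are at most $e^{l}$. In a geometrically finite Kleinian group, there are only finitely many conjugacy classes of loxodromic elements with translation length below a given bound; this is the discreteness of the closed-geodesic length spectrum of the finite-volume-core orbifold. Stable translation length on $S$ is a conjugation invariant, so it is bounded on $E_l\setminus E_0$ by a constant $l'$. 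Hence $e_l\ge \nu_{l'}$ and $\overline{e_l}\ge\overline{\nu_{l'}}$.

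\textbf{Step 3: conclusion.} Part (ii) now follows from \cref{Gap_thm: trans_length}(ii) applied to $\overline{\nu_{l'}}$. For (i), the set $E_l$ contains all finite-order elements by \cref{prop:trichotomy}(i). Loxodromic elements on $S$ are WPD because the action is acylindrical. So \cref{achiral-WPD-normzero} gives $\overline{e_l}(g)=0$ for achiral $g$ of positive entropy, and (ii) gives the converse.

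The main obstacle is Step 1: setting up Osin's graph and checking that positive entropy corresponds exactly to loxodromic action on $S$, with all zero-entropy elements elliptic on $S$. I would handle this with the standard description of geometrically finite groups as relatively hyperbolic relative to the cusp stabilizers, together with the fact that an element not conjugate into a peripheral subgroup and of infinite order is loxodromic in the coned-off graph.
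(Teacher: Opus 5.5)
Your proposal is correct and follows essentially the same route as the paper. Both arguments use Osin's acylindrical action of the geometrically finite group $\Aut(X)$ on the relative Cayley graph, show $E_0\subset T_0$, use finiteness of conjugacy classes of loxodromics with bounded translation length to get $E_l\setminus E_0\subset T_{l'}$, and then apply \cref{Gap_thm: trans_length}. The deviations are cosmetic: you obtain WPD from acylindricity on the graph where the paper uses properness of the action on $\HH_X$, and you handle $\ker\varrho$ explicitly (for IHS manifolds its finiteness needs, besides finiteness of the kernel on $H^2(X,\ZZ)$, the standard finiteness of the image of $\Aut(X)$ in $\mathrm{O}(T(X))$).
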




\begin{proof}
For the action of $\Aut(X)$ on the hyperboloid $\HH_X$,
we note that the $l$-entropy norm coincides with the associated 
$\frac{2l}{\dim X}$-translation-length norm by \cref{lem:tl_equals_ent} or \cref{lem:tl_and_ent:HK}. 
Furthermore, since this action is proper, we have $\overline{e_l}(a)=0$ for any achiral element $a\in\Aut(X)$ of positive entropy by \cref{achiral-WPD-normzero}.

Since $\Aut(X)$ acts on $\HH_X$ as a geometrically finite Kleinian group by \cite[Theorem A]{Kikuta} (also by \cite{Takatsu} for complex K3 surfaces),
$\Aut(X)$ is hyperbolic relative to a finite collection $\{G_1,\dots,G_k\}$ of maximal parabolic subgroups \cite{Bowditch-rel-hyp} (see also \cite{Osin-rel-hyp}).
Then every parabolic element is conjugate into some $G_i$ \cite[Proof of Theorem 12.4.3]{Rat}. 

By \cite[Proposition 4.28]{DGO17} and \cite[Proposition 5.2]{Osin}, 
there exists a finite symmetric subset $A$ of $\Aut(X)$ such that $A\sqcup \bigcup_i G_i$ generates $\Aut(X)$ and 
the Cayley graph
\[
\C\!\left(\Aut(X),\, A\sqcup\bigcup_i G_i\right)
\]
is hyperbolic, 
and the action of $\Aut(X)$ on this graph is acylindrical.


Note that every automorphism of zero entropy has translation-length zero with respect to the action on this Cayley graph; 
hence we have 
$E_0(X)\subset T_0(\Aut(X))$.
Since the number of conjugacy classes of automorphisms $g$ 
with positive entropy at most $l$ is finite \cite[Theorem 12.7.8]{Rat},
we have
\[
E_l(X)\setminus E_0(X)\subset T_{l'}(\Aut(X))
\]
for some $l'\in\RR_{\ge0}$ whenever $l>0$.
Thus, we obtain
$
\overline{e_l}(h)\ge \overline{\nu_{l'}}(h)
$
for any $h\in\Aut(X)$.
The assertion now follows from the gap theorem for $\overline{\nu_{l'}}$
(\cref{Gap_thm: trans_length}), 
since every automorphism of positive entropy acts loxodromically on the Cayley graph by \cite[Corollary 4.20]{Osin-rel-hyp}.
\end{proof}



\section{Achirality of parabolic automorphisms}\label{sec:para}

\Cref{Gap_thm: ent_Aut} gives a criterion for the achirality of loxodromic automorphisms 
of varieties under consideration.  
Note that every elliptic automorphisms of such a variety is achiral since its has finite order.  
In this section, we study the achirality of parabolic automorphisms of K3 surfaces and Enriques surfaces. Recall that an automorphism of a K3 surface or an Enriques surface is parabolic 
if and only if it is of infinite order and preserves a genus-one fibration (\cref{prop:trichotomy}).
We will make use of the Jacobian fibration associated with a genus-one fibration.

\subsection{Genus-one curves}\label{ss;Genus-oneCurves}

We start with genus-one curves. See \cite{Kleiman_PicardScheme} 
for Picard schemes. 
Let $C$ be a geometrically integral projective curve of genus one over a field $K$. 
The open subscheme $C_0\subset C$ where $C_0/K$ is smooth is naturally isomorphic to 
the degree-one component $\bPic_{C/K}^1$ of the Picard scheme $\bPic_{C/K}$, 
which induces the bijection $r \mapsto \O_{C_L}(r)$ on 
the $L$-valued points for any field $L$ over $K$.  
Moreover, $C_0$ is naturally an $E$-torsor, where
$E\coloneq \bPic_{C/K}^0$ is the Jacobian variety of $C/K$.  
Let $L$ be a field over $K$ such that $C_0(L)$ is non-empty 
(e.g. the separable closure or the algebraic closure). 
Then, $C_0(L)$ is an $E(L)$-torsor. 
The action of $E(L)$ on $C_0(L)$ is written as 
$r + s$ for $r\in C_0(L)$ and $s\in E(L)$.
We remark that 
$r - r' = \O_{C_{L}}(r-r')$ in $E(L) = \Pic^0(C_{L})$
for any two points $r,r'\in C_0(L)$, 
where the left-hand side is the unique point $s\in E(L)$ such that
$r = r' + s$, and the right-hand side is the line bundle associated with the 
divisor $r - r'$ on $C_{L}$. 

We now assume that $C$ is regular.  
Then, every automorphism of $C_0$ uniquely extends to an automorphism of $C$. 
Thus, the $E$-torsor structure on $C_0$ gives rise to a group homomorphism 
\[ \tau: E(K) \to \Aut_{K}(C) \] 
defined by 
$\tau_s(r) = r + s$ for $s\in E(K)$ and $r\in C_0(K)$, 
where we write $\tau_s$ instead of $\tau(s)$.  
Let $\Aut_{\mathrm{gp}}(E)$ be the group of automorphisms of 
$E$ as a group scheme over $K$, and let 
$\varphi:\Aut_{K}(C)\to \Aut_{\mathrm{gp}}(E)$
be the group homomorphism defined by the push-forward of line bundles 
(we use the push-forward instead of the pull-back to avoid $\varphi$ being 
an anti-homomorphism). 

Let $p_{C}: C\to \Spec(K)$ be the structure morphism. 
We define $\Aut_{p_C}(C)$ to be the 
group of automorphisms $g:C\to C$ as a scheme (over $\ZZ$)
for which there exists an automorphism 
$b(g):\Spec(K)\to \Spec(K)$ 
such that $b(g)\circ p_{C} = p_{C} \circ g$. 
The automorphism $b(g)$ is uniquely determined by $g$, and we have 
\[ \Aut_{K}(C) 
= \{g\in \Aut_{p_{C}}(C) \mid b(g) = \id_{\Spec(K)} \}. 
\]
We now extend the group homomorphism $\varphi$
to a homomorphism $\Aut_{p_{C}}(C) \to \Aut_{p_E}(E)$ 
where $p_E:E\to \Spec(K)$ is the structure morphism. 
Let $g\in \Aut_{p_{C}}(C)$. Then, 
$g$ can be seen as a $K$-isomorphism from 
$C\xrightarrow{p_{C}} \Spec(K) \xrightarrow{b(g)}\Spec(K)$ 
to 
$C\xrightarrow{p_{C}} \Spec(K)$, 
and the push-forward of line bundles defines a $K$-isomorphism $\varphi(g)$ from 
$E\xrightarrow{p_{E}} \Spec(K) \xrightarrow{b(g)}\Spec(K)$ 
to 
$E\xrightarrow{p_{E}} \Spec(K)$.
Hence, we obtain a homomorphism 
\[\varphi:\Aut_{p_{C}}(C) \to \Aut_{p_{E}}(E), \] 
which extends the homomorphism $\varphi:\Aut_{K}(C)\to \Aut_{\mathrm{gp}}(E)$
defined above. 
Indeed, we have $b(\varphi(g)) = b(g)$ for $g\in \Aut_{p_C}(C)$ by construction. 
Let $g\in \Aut_{p_C}(C)$. For a $\overline{K}$-point $r\in C(\overline{K})$,  
the composition $g \circ r \circ b(g)^{-1}$ is again a $\overline{K}$-point of $C$. 
Moreover, we have the formula 
\[ g\circ (r + s) \circ b(g)^{-1} 
= (g\circ r \circ b(g)^{-1}) + (\varphi(g)\circ s \circ b(g)^{-1})  \]
where $r\in C(\overline{K})$ and $s\in E(\overline{K})$.   


\begin{prop}\label{prop:conjugation-of-translation}
Let $s\in E(K)$ and $g\in \Aut_{p_C}(C)$. We have 
$g\circ \tau_s \circ g^{-1} = \tau_{\varphi(g)\circ s \circ b(g)^{-1}}$
(as automorphisms of $C$). 
\end{prop}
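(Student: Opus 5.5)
The plan is to verify the identity on $\overline{K}$-points of the smooth locus and then pass to all of $C$ by uniqueness. Both sides are automorphisms of $C$ as a scheme. Since $C$ is regular, every automorphism of $C_0$ extends uniquely to $C$. So it suffices to show that the two sides agree on the dense open $C_0$. Since $C_0$ is reduced and of finite type over $K$, an automorphism of it is determined by what it does on $\overline{K}$-points. Here I interpret the action of a non-$K$-linear $g$ on points via $r\mapsto g\circ r\circ b(g)^{-1}$, as set up just before the statement.

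First I record what the left side does to points. Take $r\in C_0(\overline{K})$ and set $r' = g^{-1}\circ r\circ b(g)$; this is again a $\overline{K}$-point of $C_0$, since $g$ preserves the smooth locus. By definition, $\tau_s$ is the $K$-automorphism with $\tau_s\circ r' = r' + s$, where $s$ is viewed as a $\overline{K}$-point via $\Spec\overline{K}\to\Spec K$. Then
\[
(g\circ\tau_s\circ g^{-1})\circ r = g\circ(r'+s)\circ b(g)^{-1} .
\]
By the displayed formula, the right-hand side equals
\[
(g\circ r'\circ b(g)^{-1}) + (\varphi(g)\circ s\circ b(g)^{-1}) = r + s',
\]
where $s' := \varphi(g)\circ s\circ b(g)^{-1}$.

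Second, I check that $s'$ is a $K$-point of $E$, so that $\tau_{s'}$ makes sense. The point $s$ factors through $\Spec K$, and $\varphi(g)$ covers $b(g)$, meaning $p_E\circ\varphi(g) = b(g)\circ p_E$. Hence $s'=\varphi(g)\circ s\circ b(g)^{-1}$ is a section of $p_E$ over $\Spec K$. It corresponds to the $\overline{K}$-point above, because the choice of embedding of $\Spec\overline{K}$ is compatible once we fix an extension of $b(g)$ to $\overline{K}$. I would choose this extension once and for all, and check that it is the one implicit in the displayed formula. Then $\tau_{s'}\circ r = r+s'$, so both sides agree on all $\overline{K}$-points of $C_0$.

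The main obstacle is the bookkeeping with the non-$K$-linear automorphism $b(g)$. One has to make precise that ``agreeing on $\overline{K}$-points twisted by a fixed lift of $b(g)$'' forces equality of scheme automorphisms. The cleanest route is to observe that $g\circ\tau_s\circ g^{-1}$ is $K$-linear, because $b$ is a homomorphism and $b(\tau_s)=\id$. Thus both sides lie in $\Aut_K(C)$, and the twisting by $b(g)$ cancels when we compare them. Two $K$-automorphisms of a geometrically reduced $K$-variety that agree on $\overline{K}$-points are equal. This gives the claim on $C_0$, and by the unique extension, on $C$.
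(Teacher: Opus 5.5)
Your proposal is correct and follows essentially the paper's route: evaluate both sides on $\overline{K}$-points using the compatibility formula $g\circ(r+s)\circ b(g)^{-1} = (g\circ r\circ b(g)^{-1}) + (\varphi(g)\circ s\circ b(g)^{-1})$, then upgrade pointwise agreement to equality of automorphisms. The paper does this last step by base change to $\overline{K}$ plus flatness of $\Spec(\overline{K})\to\Spec(K)$. You instead note that $b(g\circ\tau_s\circ g^{-1})=\id$, check that $\varphi(g)\circ s\circ b(g)^{-1}\in E(K)$, and use that two $K$-automorphisms of a geometrically reduced variety agreeing on $\overline{K}$-points coincide. That is the same fact, and your version makes explicit the $K$-linearity and the lift of $b(g)$ to $\overline{K}$ that the paper leaves implicit; the choice of lift cancels in the computation in any case.
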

\begin{proof}
For $r\in C(\overline{K})$, we have
\[\begin{split}
g\circ \tau_s \circ g^{-1}\circ r 
&= g\circ \tau_s \circ g^{-1}\circ r \circ b(g) \circ b(g)^{-1} \\
&= g\circ (g^{-1}\circ r \circ b(g) + s) \circ b(g)^{-1} \\
&= r + \varphi(g)\circ s \circ b(g)^{-1} \\
&= \tau_{\varphi(g)\circ s \circ b(g)^{-1}}(r). 
\end{split}
\]
This means that we obtain the desired equation after base change to 
$\overline{K}$ since $C_{\overline{K}}$ is an variety. 
This implies that 
$g\circ \tau_s \circ g^{-1} = \tau_{\varphi(g)\circ s \circ b(g)^{-1}}$
since $\Spec(\overline{K})\to \Spec(K)$ is a flat morphism. 
\end{proof}

Let $[-1]\in \Aut_{\mathrm{gp}}(E)$ denote the automorphism of $E$ 
given by multiplication by $-1$. 

\begin{cor}\label{cor:[-1]_implies_achiral}
Let $s \in E(K)$. Suppose that there exists $g\in \Aut_K(C)$
such that $\varphi(g) = [-1]$. 
Then $g\circ \tau_s\circ g^{-1} = \tau_{-s} = \tau_s^{-1}$. 
In particular, $\tau_s$ is achiral. \qed
\end{cor}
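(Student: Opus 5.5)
This is a direct specialization of \cref{prop:conjugation-of-translation}, and the proof should take only a few lines. Take $g\in\Aut_K(C)$ with $\varphi(g)=[-1]$. Since $g$ is a $K$-automorphism, $b(g)=\id_{\Spec(K)}$. Applying \cref{prop:conjugation-of-translation} to $s\in E(K)$ then gives
\[
g\circ\tau_s\circ g^{-1}=\tau_{[-1]\circ s\circ \id}=\tau_{-s},
\]
where $[-1]\circ s$ is the $K$-point $-s$ of $E$, because $[-1]$ is multiplication by $-1$ on the group scheme $E$.

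Next we need $\tau_{-s}=\tau_s^{-1}$. This holds because $\tau:E(K)\to\Aut_K(C)$ is a group homomorphism: $\tau_s\circ\tau_{-s}=\tau_{s+(-s)}=\tau_0=\id_C$. To be careful about the homomorphism property, one can check it on $C_0(\overline K)$. There $(r+s)+s'=r+(s+s')$ by the torsor axioms, and $C_0$ is dense in $C$ and automorphisms of $C_0$ extend uniquely to $C$, so the identity holds on all of $C$.

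Finally, $g\tau_s g^{-1}=\tau_s^{-1}$ is exactly the condition in \cref{def-mirror-condition} with $m=1$, so $\tau_s$ is achiral.

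No step is a real obstacle. The only point to watch is that the formula in \cref{prop:conjugation-of-translation} involves $b(g)^{-1}$, and this reduces to the identity precisely because we assume $g\in\Aut_K(C)$ rather than the larger group $\Aut_{p_C}(C)$.
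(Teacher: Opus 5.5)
Your proposal is correct and matches the paper's (implicit) argument: the corollary is marked \qed precisely because it is \cref{prop:conjugation-of-translation} specialized to $b(g)=\id_{\Spec(K)}$ and $\varphi(g)=[-1]$, combined with $\tau$ being a group homomorphism and $m=1$ in \cref{def-mirror-condition}. Your extra check that $\tau$ is a homomorphism is harmless but not needed, since the paper already asserts this when it introduces $\tau$.
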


\subsection{Genus-one fibrations}\label{ss:Genus-oneFibrations}
Let $k$ be an algebraically closed field. 
Let $p:X\to B$ be a genus-one fibration from a smooth projective surface $X$ to 
a smooth projective curve $B$ over $k$. 
The group of automorphisms of $X$ that preserve $p$ is denoted by $\Aut_p(X)$. 
Let $K$ be the function field of $B$, and
let $E \xrightarrow{p_E} K$ be the Jacobian variety of the genus-one curve $X_K$. 
Then, there exists a genus-one fibration $j:J\to B$ that admits a section and 
satisfies the following 
properties (\cite[\S\S 4.2, 4.3]{Cossec-Dolgachev-Liedtke_EnriquesI}): 

\begin{enumerate}
\item $J$ is a smooth projective surface over $k$, and $j:J\to B$ is relatively minimal, 
that is, the relative canonical sheaf $\omega_{J/B}$ is nef. 
\item $j|_{J^\sharp}:J^\sharp\to B$ is a N\'eron model of $E$, 
where $J^\sharp \subset J$ is the open subscheme where $j|_{J^\sharp}:J^\sharp\to B$ is smooth. 
Namely, $((J^\sharp)_K \xrightarrow{j|_{(J^\sharp)_K}} K) = (E\xrightarrow{p_E} K)$, 
and for any smooth $B$-scheme $Y$ and any $K$-morphism $u_K: Y_K \to (J^\sharp)_K$, there exists 
a unique $B$-morphism $u: Y \to J^\sharp$ extending $u_K$. 
\end{enumerate}
This fibration $j$ is determined uniquely by $p$. We refer to $j$ as the 
\emph{Jacobian fibration associated to $p$} and write $J_p: J_X\to B$. 

There is a natural group homomorphism 
$\Aut_{J_p}(J_X) \to \Aut_{p_E}(E)$ given by the restriction to the generic fiber. 
This is an isomorphism. Indeed, the inverse is constructed as follows. 
Let $h\in \Aut_{p_E}(E)$. Note that $b(h):\Spec(K)\to \Spec(K)$ uniquely extends to 
an automorphism of $B$ since $B$ is regular projective curve; 
we denote this automorphism again by $b(h):B\to B$. 
On the other hand, $h$ extends to an birational map $\tilde{h}:J_X\to J_X$,  
and the diagram
\[ \xymatrix{
J_X \ar@{.>}[r]^{\tilde{h}} \ar[d]^{J_p} & J_X \ar[d]^{J_p}\\
B \ar[r]^{b(h)} & B 
} \]
commutes. 
Since $J_X\to B$ is relatively minimal, the birational map $\tilde{h}$ is in fact 
an automorphism. 
Then, the group homomorphism $\Aut_{p_E}(E) \to \Aut_{J_p}(J_X),\; h\mapsto \tilde{h}$
is the inverse of the natural homomorphism $\Aut_{J_p}(J_X) \to \Aut_{p_E}(E)$. 

We now obtain a group homomorphism 
\begin{equation*}
\Aut_{p}(X)
\xrightarrow{} \Aut_{p|_{X_K}}(X_K)
\xrightarrow{\varphi} \Aut_{p_E}(E)
\xrightarrow{\cong} \Aut_{J_p}(J_X),  
\end{equation*}
where the first arrow is the homomorphism given by the restriction to the generic fiber, 
the second arrow is the homomorphism constructed in the previous subsection, 
and third arrow is the natural isomorphism as discussed above.
By the abuse of notation, this homomorphism is also denoted by 
$\varphi: \Aut_{p}(X)\to \Aut_{J_p}(J_X)$. 

Recall that $\Aut_{p_E}(E)$ contains the automorphism $[-1]$. 
We use the same symbol $[-1]$ to denote the corresponding element in 
$\Aut_{J_p}(J_X)$ under the natural isomorphism 
$\Aut_{J_p}(J_X) \cong \Aut_{p_E}(E)$. 


Assume that the fibration $p:X\to B$ is nontrivial, i.e., not isomorphic to a product. 
Then, the Mordell--Weil Theorem (\cite[Theorem 4.3.3]{Cossec-Dolgachev-Liedtke_EnriquesI}) 
states that $E(K)$ is a finitely generated group. 
The group $E(K)$ is called the \emph{Mordell--Weil group} of $p$, and 
the rank of $E(K)$ is called the \emph{Mordell--Weil rank} of $p$. 
It is known as the \emph{Shioda--Tate formula} that the Mordell--Weil rank 
is given by 
\[ \rho(X) - 2 - \sum_{t\in B}(\#\mathrm{Irr}(X_t) - 1) \]
where $\#\mathrm{Irr}(X_t)$ is the number of the irreducible components of the fiber $X_t$; see
\cite[Formula (4.3.1), Corollary 4.3.18, and Theorem 4.3.20]{Cossec-Dolgachev-Liedtke_EnriquesI}. 

Assume that $p:X\to B$ is relatively minimal. 
Then, there exists a natural isomorphism $\Aut_{p|_{X_K}}(X_K) \cong \Aut_p(X)$. 
Furthermore, we have the injective homomorphism $\tau:E(K)\to \Aut_{p|_{X_K}}(X_K)$. 
Hence, the Mordell--Weil group $E(K)$ can be regarded as a subgroup of $\Aut_p(X)$. 
The image of $\tau:E(K)\to \Aut_{p|_{X_K}}(X_K) \cong \Aut_p(X)$ is denoted by 
$\MW(p) \subset \Aut_p(X)$. \cref{cor:[-1]_implies_achiral} yields:

\begin{lem}\label{lem:[-1]_implies_achiral:fib}
Suppose that there exists $h\in \Aut_p(X)$
such that $\varphi(h) = [-1]$. 
Then $h\circ g \circ h^{-1} = g^{-1}$ for any $g\in \MW(p)$. 
In particular, $\MW(p)$ is uniformly achiral. \qed
\end{lem}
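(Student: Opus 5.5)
The lemma is a direct translation of \cref{cor:[-1]_implies_achiral} to the surface setting, so the plan is to reduce to the generic fiber. Since $p$ is relatively minimal, restriction to the generic fiber gives the isomorphism $\Aut_p(X)\cong\Aut_{p|_{X_K}}(X_K)$. Under this isomorphism $\MW(p)$ is the image of $\tau:E(K)\to\Aut_K(X_K)$. The homomorphism $\varphi:\Aut_p(X)\to\Aut_{J_p}(J_X)$ is, by definition, the composite of restriction, the generic-fiber $\varphi$, and the isomorphism $\Aut_{p_E}(E)\cong\Aut_{J_p}(J_X)$. Hence the hypothesis $\varphi(h)=[-1]$ means exactly that the restriction $h_K$ of $h$ to $X_K$ satisfies $\varphi(h_K)=[-1]$ in $\Aut_{p_E}(E)$.

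The first point to settle is that $h_K$ lies in $\Aut_K(X_K)$, i.e.\ $b(h)=\id$. This is needed because \cref{cor:[-1]_implies_achiral} is stated for $g\in\Aut_K(C)$. It follows from the identity $b(\varphi(h))=b(h)$ noted in \S\ref{ss;Genus-oneCurves}: the automorphism $[-1]$ of $E$ is $K$-linear, so $b([-1])=\id_{\Spec(K)}$, and therefore $b(h_K)=\id$.

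Alternatively, one can skip that reduction and apply \cref{prop:conjugation-of-translation} directly to $h_K\in\Aut_{p_{X_K}}(X_K)$ and $s\in E(K)$. This gives $h_K\tau_s h_K^{-1}=\tau_{[-1]\circ s\circ b(h)^{-1}}$. Since $b(h)=\id$, the subscript is $-s$, and $\tau_{-s}=\tau_s^{-1}$ because $\tau$ is a homomorphism. Transporting this back through the isomorphism $\Aut_{p|_{X_K}}(X_K)\cong\Aut_p(X)$ yields $hgh^{-1}=g^{-1}$ for every $g=\tau_s\in\MW(p)$.

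Uniform achirality then follows from the definition by taking $b=h$ and $m=1$ for every $a\in\MW(p)$. The only step needing care is the bookkeeping that the various identifications are compatible with $\varphi$ and with $b(\cdot)$. This compatibility is built into the definitions, so I expect no real obstacle.
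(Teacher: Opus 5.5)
Your proposal is correct and follows the paper's argument, which simply restricts to the generic fiber and applies \cref{cor:[-1]_implies_achiral}. Your explicit check that $b(h)=\id$ (via $b(\varphi(h))=b(h)$ and $b([-1])=\id$), needed because the corollary is stated for $\Aut_K(C)$, supplies a detail the paper leaves implicit.
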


We remark that every genus-one fibration $X\to B$ is automatically nontrivial and 
relatively minimal when $X$ is a K3 surface or an Enriques surface. 

\begin{prop}\label{prop:finite_index}
Suppose that $X$ is a K3 surface or an Enriques surface. 
Then $\MW(p)$ is a finite index subgroup of $\Aut_p(X)$. 
\end{prop}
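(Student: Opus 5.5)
We have to show that $\MW(p)$ has finite index in $\Aut_p(X)$. The plan is to filter $\Aut_p(X)$ through two homomorphisms with finite image, and then identify the remaining kernel, up to finite index, with $\MW(p)$.

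First consider $b:\Aut_p(X)\to\Aut(B)$. The base $B$ is $\PP^1$, and every $b(g)$ must permute the finite set of points $t\in B$ over which the fiber $X_t$ is singular, reducible, or multiple. If there are at least three such points, the image of $b$ is finite. This holds when $\rho$ is small, and in general it follows from Euler-characteristic considerations: $e(X)=24$ or $12$ is the sum of the local contributions of the singular fibers. Alternatively, one can argue lattice-theoretically. The image of $\Aut_p(X)$ in $\mathrm{O}(\Num(X))$ fixes the fiber class $f$, or $2f$ in the Enriques case. It therefore acts on the negative semidefinite lattice $f^\perp/\ZZ f$. The kernel of this action consists of the parabolic elements together with the translations, and the quotient embeds in the finite group $\mathrm{O}(f^\perp/\ZZ f)$ modulo the part acting through the root sublattice. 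I prefer this lattice route because it works uniformly, including in positive characteristic and for quasi-elliptic fibrations.

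Concretely, I will use the following exact-sequence argument. Let $H=\ker\bigl(\Aut_p(X)\to \mathrm{O}(\Num(X))\bigr)\cdot\MW(p)$. For the quotient $\Aut_p(X)/\MW(p)$:
\begin{enumerate}
\item The stabilizer of $f$ in $\mathrm{O}(\Num(X))$ maps to $\mathrm{O}(f^\perp/\ZZ f)$. The target is the orthogonal group of a negative definite lattice, hence finite.
\item The kernel of this map consists of Eichler transvections $x\mapsto x+(x,f)v-\ldots$, parametrized by $v\in f^\perp/\ZZ f$.
\end{enumerate}
Elements of $\Aut_p(X)$ acting trivially on $f^\perp/\ZZ f$ act on the generic fiber $X_K$ trivially in cohomology modulo translations. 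Concretely, on the trivial lattice (the fiber components and $f$) they act trivially, so they induce an element of $\Aut_K(X_K)$ whose image under $\varphi$ is trivial up to a finite group. This is where $\varphi$ enters. The map $\varphi:\Aut_K(X_K)\to\Aut_{\mathrm{gp}}(E)$ has kernel exactly $\tau(E(K))$, since an automorphism acting trivially on $\Pic^0$ is a translation over $\overline K$ and hence, by descent, a translation by a $K$-point of $E$. The target $\Aut_{\mathrm{gp}}(E)$ is finite, of order dividing $24$. Hence $\MW(p)$ has finite index in $\Aut_K(X_K)$, where the identification uses relative minimality.

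It remains to show that $\Aut_K(X_K)$ has finite index in $\Aut_p(X)$, that is, that $b(\Aut_p(X))\subset\Aut(B)$ is finite. This is the main obstacle. My first attempt will be to use the finite kernel (\cref{prop:finite_kernel}): $\ker\varrho$ is finite, so it suffices to show that the image of $\{g: b(g)\neq\id\}$ in $\mathrm{O}(\Num(X))$ meets the image of $\Aut_K(X_K)$ with finite index. I expect to handle this by noting that, on the negative definite quotient $f^\perp/\ZZ f$ modulo the root part, $\Aut_p(X)$ acts through a finite group $F$. The preimage of $1$ in $F$ acts on $\Num(X)$ as translations by Mordell--Weil lattice vectors composed with elements of the finite Weyl group of the fibers. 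After multiplying by a suitable element of $\MW(p)$, such an element acts on $\Num(X)$ with finite order. By \cref{prop:finite_kernel} there are then finitely many such elements up to the finite kernel. This gives $[\Aut_p(X):\MW(p)]\le |F|\cdot|W|\cdot|\ker\varrho|<\infty$. The delicate point is checking that every isometry in the kernel of $\mathrm{O}(\mathrm{Stab}(f))\to\mathrm{O}(f^\perp/\ZZ f)$ coming from an automorphism is realized by a Mordell--Weil translation up to finite error. I would verify this via the Shioda--Tate description $f^\perp/(\text{trivial lattice})\otimes\QQ\cong E(K)\otimes\QQ$.
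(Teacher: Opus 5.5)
The gap is the step you yourself call ``the delicate point''. Your closing lattice argument has the same shape as the paper's proof: $\varrho(\Aut_p(X))$ fixes the (primitive) fibre class $f$, acts on the negative definite lattice $f^\perp/\ZZ f$ through a finite group, the kernel consists of Eichler transvections $t_{f,v}$, and $\ker\varrho$ is finite. But your claim that, after multiplying by a suitable element of $\MW(p)$, every element of the transvection part acts with finite order is exactly what has to be proved. You only point to a Shioda--Tate isomorphism $f^\perp/T\otimes\QQ\cong E(K)\otimes\QQ$. Using that would also require knowing how each $\tau_s$ acts on $\Num(X)$, which you never address.

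What closes the argument is a rank count, and it needs only the Shioda--Tate rank formula.
First, $\Aut_p(X)$ permutes the finitely many $(-2)$-curves in fibres, so a finite-index subgroup fixes each of their classes. For such an element acting as $t_{f,v}$, the identity $t_{f,v}(C)=C+(C,v)f$ forces $(C,v)=0$ for every fibre component $C$. So $v$ lies in the orthogonal complement of the root lattice $R$ in $(f^\perp/\ZZ f)\otimes\QQ$. Hence $\varrho(\Aut_p(X))$ is virtually free abelian of rank at most $\rho-2-\rk R$. This is also the justification missing from your phrase ``translations by Mordell--Weil lattice vectors''.
Second, $\MW(p)$ has rank exactly $r=\rho-2-\rk R$, and since $\ker\varrho$ is finite, so does $\varrho(\MW(p))$.
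A subgroup of rank $r$ in a virtually free abelian group of rank at most $r$ has finite index. Moreover $[\Aut_p(X):\MW(p)]\le|\ker\varrho|\cdot[\varrho(\Aut_p(X)):\varrho(\MW(p))]$. Without this comparison, nothing in your sketch rules out transvections in $\varrho(\Aut_p(X))$ in directions that $\MW(p)$ does not reach.

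Once this count is in place it bounds the index directly, so the detour through $b$ and $\varphi$ is unnecessary. It is also the weakest part of your proposal.
The ``at least three special fibres'' claim is only asserted, and it fails in general. In characteristic $2$, $y^2+y=x^3+t^{11}$ is an elliptic K3 surface whose only singular fibre (type II, wildly ramified) lies over $t=\infty$, so the stabilizer in $\Aut(\PP^1)$ of the special points is infinite.
Your claim that elements acting trivially on $f^\perp/\ZZ f$ give elements of $\Aut_K(X_K)$ presupposes $b(g)=\id$. This can fail even for elements of $\ker\varrho$; it is harmless only because $\ker\varrho$ is finite, which you would need to say.
Finally, $|\Aut_{\mathrm{gp}}(E)|\mid 24$ is a statement about elliptic curves, so it does not cover quasi-elliptic fibrations, despite your claim that the argument is uniform.
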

\begin{proof}
Note that $\MW(p)$ is a finitely generated abelian group of rank 
$r \coloneq \rho(X) - 2 - \sum_{t\in B}(\#\mathrm{Irr}(X_t) - 1)$.  
Since $\varrho:\Aut(X)\to \mathrm{O}(\Num(X))$ 
is of finite kernel (\cref{prop:finite_kernel}), 
$\varrho(\MW(p))$ is an abelian group of rank $r$, and it suffices to show that 
$\varrho(\Aut_p(X))$ is a virtually abelian group of rank $\leq r$. 

Let $e\in \Num(X)$ be a primitive vector 
lying on the line spanned by the fiber class of the fibration $p$. Then 
\[\varrho(\Aut_p(X))\subset \mathrm{O}(\Num(X))_e 
\coloneq \{ t\in \mathrm{O}(\Num(X)) \mid t(e) = e \}, \] 
and thus,  
$\varrho(\Aut_p(X))$ naturally acts on the lattice $e^\perp/\ZZ e$. 
On the other hand, the classes of $(-2)$-curves in fibers over $p$ spans 
a root lattice $R$ in $e^\perp/\ZZ e$, which is preserved by $\varrho(\Aut_p(X))$. 
Hence, by a similar argument to \cite[Proposition 3.2]{Brandhorst-Mezzedimi2022}, 
it follows that $\varrho(\Aut_p(X))$ is virtually abelian and 
\[ \rk \varrho(\Aut_p(X)) \leq \rho(X) - 2 - \rk R = r. \]
This completes the proof. 
\end{proof}

\begin{prop}\label{prop:MWrank0}
Suppose that $X$ is a K3 surface or an Enriques surface. 
If the Mordell--Weil rank of $p$ is $0$, then 
$\Aut_p(X)$ is uniformly achiral. 
If $p$ is a quasi-elliptic fibration, then 
$\Aut_p(X)$ is uniformly achiral. 
\end{prop}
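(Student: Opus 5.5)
Both assertions reduce to showing that $\Aut_p(X)$ is finite in the relevant cases. A finite group is uniformly achiral: take $b=1$, and for each $a$ let $m=\ord(a)$. Then $b a^m b^{-1} = 1 = a^{-m}$, with the same $b$ working for every element.

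\emph{Mordell--Weil rank $0$.} Here $E(K)$ is a finitely generated abelian group of rank $0$, hence finite. So $\MW(p)$ is finite. By \cref{prop:finite_index}, $\MW(p)$ has finite index in $\Aut_p(X)$, so $\Aut_p(X)$ is finite, and we are done by the remark above.

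\emph{Quasi-elliptic fibrations.} We aim to show that $E(K)$ is again finite, and then argue exactly as in the first case. For a quasi-elliptic fibration, the generic fiber $X_K$ is regular but not smooth, with a single cusp. Its Jacobian $E=\bPic^0_{X_K/K}$ is then a form of the additive group $\mathbb{G}_a$ over $K$. This only happens in characteristic $2$ or $3$, so $E(K)$ is a $p$-torsion group with $p=\operatorname{char}k$. Since $E(K)$ is also finitely generated by the Mordell--Weil theorem \cite[Theorem 4.3.3]{Cossec-Dolgachev-Liedtke_EnriquesI}, it is finite.

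If one prefers to avoid the structure of $E$, there is a numerical route via the Shioda--Tate formula. For a quasi-elliptic fibration the $(-2)$-curves in the fibers span a root lattice of rank $\rho(X)-2$ in $e^\perp/\ZZ e$, so the Mordell--Weil rank $r$ vanishes. The bound from the proof of \cref{prop:finite_index} then gives that $\varrho(\Aut_p(X))$ is virtually abelian of rank $0$, i.e.\ finite. Finiteness of $\Aut_p(X)$ itself follows because $\varrho$ has finite kernel (\cref{prop:finite_kernel}).

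The main obstacle is this justification for quasi-elliptic fibrations: that $E(K)$ is torsion, or equivalently that the Mordell--Weil rank is zero. I would cite the standard fact from \cite[\S4.3]{Cossec-Dolgachev-Liedtke_EnriquesI} that the Jacobian of a quasi-elliptic generic fiber is unipotent, which gives both properties. Everything else follows formally from \cref{prop:finite_index}.
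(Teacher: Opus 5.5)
Your proposal is correct and follows the paper's proof. Finiteness of $\MW(p)$ together with \cref{prop:finite_index} makes $\Aut_p(X)$ finite, hence uniformly achiral with $b=1$, and the quasi-elliptic case reduces to Mordell--Weil rank $0$; the paper simply cites this from \cite[Theorem 4.3.3]{Cossec-Dolgachev-Liedtke_EnriquesI}, whereas you unpack it via the Jacobian being a form of $\mathbb{G}_a$ and hence $p$-torsion. Note that your ``numerical route'' is not an independent argument, since the claim that the fiber root lattice has rank $\rho(X)-2$ is equivalent to Mordell--Weil rank $0$ by Shioda--Tate, but your primary route does not need it.
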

\begin{proof}
Suppose that the Mordell--Weil rank of $p$ is $0$. 
Then, $\MW(p)$ is a finite group, and so is $\Aut_p(X)$ by 
\cref{prop:finite_index}. Hence $\Aut_p(X)$ is uniformly achiral. 

Suppose that $p$ is a quasi-elliptic fibration. 
Then $p$ has Mordell--Weil rank $0$ by 
\cite[Theorem 4.3.3]{Cossec-Dolgachev-Liedtke_EnriquesI}. 
Therefore, $\Aut_p(X)$ is uniformly achiral by the former statement. 
\end{proof}

\subsection{The Weil--Ch\^atelet group}
We refer to \cite{Serre_GaloisCohomology} for Galois cohomology. 
Let $K$ be a field, $K_\mathrm{s}$ its separable closure, and 
$G \coloneq \Gal(K_\mathrm{s}/K)$. 
We first recall the natural action of $G$ on sets associated with schemes. 
For $\sigma\in G$, we write 
$\sigma^* = \Spec(\sigma):\Spec(K_\mathrm{s})\to \Spec(K_\mathrm{s})$. 
Let $X$ and $Y$ be $K$-schemes.  
The Galois group $G$ acts on the left on the set 
$\Hom_{K_\mathrm{s}}(X_{K_\mathrm{s}}, Y_{K_\mathrm{s}})$ of $K_\mathrm{s}$-morphisms 
from $X_{K_\mathrm{s}}$ to $Y_{K_\mathrm{s}}$ by 
${}^\sigma f = (\id_Y\times(\sigma^*)^{-1})\circ f \circ (\id_X\times\sigma^*)$ 
for $\sigma\in G$ and $f\in \Hom_{K_\mathrm{s}}(X_{K_\mathrm{s}}, Y_{K_\mathrm{s}})$. 
In particular, $G$ acts on the left on the set $X(K_\mathrm{s})$ of 
$K_\mathrm{s}$-points. 
One has $\Hom_{K}(X, Y) = \Hom_{K_\mathrm{s}}(X_{K_\mathrm{s}}, Y_{K_\mathrm{s}})^G$, 
and $X(K) = X(K_\mathrm{s})^G$. 
We write $f(x) = f\circ x \in Y(K_\mathrm{s})$ for $x\in X(K_\mathrm{s})$ and 
$f\in \Hom_{K_\mathrm{s}}(X_{K_\mathrm{s}}, Y_{K_\mathrm{s}})$. 
We have the following formula:
\[ (^\sigma f)(x) = {}^{\sigma}(f({}^{\sigma^{-1}}{x}))
\quad (x\in X(K_\mathrm{s}), f\in \Hom_{K_\mathrm{s}}(X_{K_\mathrm{s}}, Y_{K_\mathrm{s}}), 
\sigma\in G). 
\]
If $f\in \Hom_{K_\mathrm{s}}(X_{K_\mathrm{s}}, Y_{K_\mathrm{s}})$ is an isomorphism 
then so is ${}^\sigma f$. 
Hence, the automorphism group $\Aut_{K_\mathrm{s}}(X_{K_\mathrm{s}})$ is 
naturally regarded as a left $G$-group. 

Now, let $C$ be a smooth projective curve of genus one over $K$. 
Then, its Jacobian variety $E$ is an elliptic curve. 
We have the following exact sequence of $G$-groups:
\[
1 \xrightarrow{} E(K_\mathrm{s})
\xrightarrow{\tau} \Aut_{K_\mathrm{s}}(C_{K_\mathrm{s}})
\xrightarrow{\varphi} \Aut_{\mathrm{gp}}(E_{K_\mathrm{s}})
\xrightarrow{} 1
\]
where $\tau$ and $\varphi$ are the homomorphisms defined in \S\ref{ss;Genus-oneCurves}. 
By taking Galois cohomology, we obtain the exact sequence
\[
1 \xrightarrow{} E(K)
\xrightarrow{\tau} \Aut_{K}(C)
\xrightarrow{\varphi} \Aut_{\mathrm{gp}}(E)
\xrightarrow{\delta} H^1(G,E(K_\mathrm{s})). 
\]
The Galois cohomology group $H^1(G,E(K_\mathrm{s}))$ is called the 
\emph{Weil--Ch\^atelet group} of $E$ and is denoted by $\mathrm{WC}(E/K)$. 
Let $[C]\in \mathrm{WC}(E/K)$ denote the cohomology class of the $E$-torsor $C$.
The order of $[C]$ in $\mathrm{WC}(E/K)$ is referred to as the 
\emph{period} of the $E$-torsor $C$. 

\begin{lem}\label{lem:existenceof-1}
The image of $\varphi:\Aut_K(C)\to \Aut_{\mathrm{gp}}(E)$ contains $[-1]$ 
if and only if the period of the $E$-torsor $C$ is at most $2$. 
\end{lem}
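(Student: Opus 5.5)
By the exactness of the Galois-cohomology sequence above, $[-1]$ lies in the image of $\varphi:\Aut_K(C)\to\Aut_{\mathrm{gp}}(E)$ if and only if $\delta([-1])=0$ in $\mathrm{WC}(E/K)$. So the plan is to compute $\delta([-1])$ explicitly and show that
\[ \delta([-1]) = -2[C] \quad (\text{equivalently } 2[C] \text{ up to sign}). \]
The lemma then follows at once: $\delta([-1])=0$ holds iff $2[C]=0$, i.e. iff the period of $C$ divides $2$.

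To compute $\delta([-1])$, I recall the description of the connecting map for the short exact sequence of $G$-groups. Lift $[-1]$ to an element $f\in\Aut_{K_\mathrm{s}}(C_{K_\mathrm{s}})$ with $\varphi(f)=[-1]$. Concretely, fix $r_0\in C(K_\mathrm{s})$ and set
\[ f(r) = r_0 - (r - r_0), \qquad \text{i.e. } f(r_0+s) = r_0 - s. \]
The cocycle is $\sigma\mapsto f^{-1}\circ{}^\sigma f$. This lies in $\tau(E(K_\mathrm{s}))$ because ${}^\sigma[-1]=[-1]$, since $[-1]$ is defined over $K$. Using the formula $({}^\sigma f)(x)={}^\sigma(f({}^{\sigma^{-1}}x))$ and the Galois equivariance of the torsor action, we get ${}^\sigma f(r_0+s) = {}^\sigma r_0 - s$. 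Therefore ${}^\sigma f$ and $f$ differ by translation by ${}^\sigma r_0 - r_0$, after composing with $[-1]$: one finds $f^{-1}\circ{}^\sigma f = \tau_{\pm 2({}^\sigma r_0 - r_0)}$, with a sign to be tracked carefully.

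The standard cocycle representing $[C]$ is $\sigma\mapsto {}^\sigma r_0 - r_0$. Hence $\delta([-1])=\pm 2[C]$. The sign conventions (left versus right cocycles, and whether $\tau$ is composed on the left) do not affect the vanishing criterion.

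The main obstacle is bookkeeping. One has to check carefully that the connecting map for a nonabelian middle term, with abelian normal subgroup $E(K_\mathrm{s})$, is given by this cocycle. One also has to verify the identity $f^{-1}\circ{}^\sigma f=\tau_{2({}^\sigma r_0-r_0)}$ pointwise on $C(K_\mathrm{s})$, since the action of $G$ on morphisms conjugates by $\sigma^*$. Everything else is formal. A cross-check for the reverse direction: if $2[C]=0$, one can also build the involution directly. Period $2$ gives a degree-2 divisor class $D$ defined over $K$ (via $\mathrm{Pic}$), and $r\mapsto D-r$ is then a $K$-automorphism with $\varphi=[-1]$.
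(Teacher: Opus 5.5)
Your strategy is the paper's: lift $[-1]$ to the point reflection $f(r)=r_0+(r_0-r)$, compute the connecting cocycle $\sigma\mapsto f^{-1}\circ{}^\sigma f$, and obtain $\delta([-1])=-2[C]$. However, the one computation that carries the content of the lemma is wrong as you wrote it. The formula ${}^\sigma f(r_0+s)={}^\sigma r_0-s$ is false. From $({}^\sigma f)(x)={}^\sigma(f({}^{\sigma^{-1}}x))$ one gets instead ${}^\sigma f({}^\sigma r_0+s)={}^\sigma r_0-s$, i.e.\ ${}^\sigma f$ is the reflection about ${}^\sigma r_0$.

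Taken literally, your formula says ${}^\sigma f=\tau_c\circ f$ with $c={}^\sigma r_0-r_0$. That gives $f^{-1}\circ{}^\sigma f=f^{-1}\circ\tau_c\circ f=\tau_{-c}$ and $\delta([-1])=-[C]$, which would yield the false criterion ``$C(K)\neq\emptyset$''. So the factor $2$ in your claimed $\tau_{\pm 2c}$ does not follow from your own intermediate step. Since the lemma is about exactly this factor $2$ (not the sign), this step cannot be deferred as bookkeeping.

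The correct computation, with $r=r_0+s$, is ${}^\sigma f(r)={}^\sigma r_0+({}^\sigma r_0-r)=r_0+2c-s$. Then $f(r_0+2c-s)=r_0+s-2c$, so $f^{-1}\circ{}^\sigma f=\tau_{-2c}$, exactly as in the paper.

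Your ``cross-check'' is sound and can be upgraded to a proof of both directions in which the $2$ is transparent. If $g\in\Aut_K(C)$ has $\varphi(g)=[-1]$, then $r+g(r)\in\bPic^2_{C/K}(K_\mathrm{s})$ is independent of $r$ (because $g(r+s)=g(r)-s$) and is Galois invariant. Moreover, $\bPic^2_{C/K}$ is the $E$-torsor of class $2[C]$, so it has a $K$-point only if $2[C]=0$. Conversely, a $K$-point $D$ of $\bPic^2_{C/K}$ gives the $K$-involution $r\mapsto D-r$, which has $\varphi=[-1]$.
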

\begin{proof}
Fix $p_0 \in C(K_\mathrm{s})$, and let $\theta:E_{K_\mathrm{s}}\to C_{K_\mathrm{s}}$
denote the isomorphism determined by
$E(K_\mathrm{s})\to C(K_\mathrm{s}), e \mapsto p_0 + e$. 
Note that $[C]$ is the class of the cocycle $({}^\sigma p_0 - p_0)_\sigma$. 

We compute $\delta([-1])$. 
Take $f\in \Aut_{K_\mathrm{s}}(C_{K_\mathrm{s}})$ such that $\varphi(f) = [-1]$. 
We may assume that $\theta^{-1}\circ f \circ \theta = [-1]$. 
For any $p\in C(K_\mathrm{s})$, we have
\[ f(p) 
= \theta\circ (\theta^{-1} \circ f \circ \theta )(\theta^{-1}(p)) 
= \theta([-1](p-p_0))
= p_0 + (p_0-p),
\]
and 
\[ \begin{split}
(f^{-1}\circ {}^\sigma f)(p)
&= f^{-1}({}^{\sigma}(f({}^{\sigma^{-1}} p)) \\
&= f^{-1}({}^{\sigma}(p_0 + (p_0- {}^{\sigma^{-1}} p))\\
%
%
&= p_0 + (p_0 - (^\sigma p_0 + ({}^\sigma p_0 - p)))\\
&=p - [2]({}^\sigma p_0 - p_0)\\
&=\tau_{-[2]({}^\sigma p_0 - p_0)}(p). 
\end{split}
\]
This means that $\delta([-1]) = -2[C]$.
Hence $\delta([-1]) = 0$ if and only if the order of $[C]$ is at most $2$, 
and the statement follows. 
\end{proof}

\begin{prop}\label{prop:exsistenceof-1}
Let $X$ be a smooth projective surface over an algebraically closed field, 
and let $p:X\to B$ be a relatively minimal elliptic fibration. 
The following conditions are equivalent: 
\begin{enumerate}
\item The image of $\varphi:\Aut_p(X)\to \Aut_{J_p}(J_X)$ contains $[-1]$. 
\item The multisection index of $p$ is either $1$ or $2$. 
\end{enumerate}
Moreover, if $X$ is a K3 surface or an Enriques surface, 
and if one of these conditions holds, then $\Aut_p(X)$ is uniformly achiral. 
\end{prop}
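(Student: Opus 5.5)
The plan is to reduce the statement to \cref{lem:existenceof-1} applied to the generic fiber $C = X_K$, which is a smooth genus-one curve over $K = k(B)$ since $p$ is elliptic.

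\emph{Reduction to the generic fiber.} Since $p$ is relatively minimal, $\Aut_p(X)\cong \Aut_{p|_{X_K}}(X_K)$. Also $\Aut_{J_p}(J_X)\cong\Aut_{p_E}(E)$, and under this isomorphism $[-1]$ corresponds to $[-1]$. Condition (i) therefore says that some $g\in \Aut_{p_C}(C)$ has $\varphi(g)=[-1]$. Because $b(\varphi(g))=b(g)$ and $b([-1])=\id$, any such $g$ must lie in $\Aut_K(C)$. So (i) is equivalent to $[-1]\in \varphi(\Aut_K(C))$, and by \cref{lem:existenceof-1} this holds if and only if the period of $[C]\in\mathrm{WC}(E/K)$ is at most $2$.

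\emph{Period versus index.} It remains to identify the period with the multisection index of $p$. The multisection index is the minimal positive degree of a multisection of $p$. Taking closures of closed points of $X_K$ and restricting horizontal divisors to the generic fiber shows it equals the index of $C/K$, i.e.\ the gcd of the degrees of closed points, equivalently the minimal positive degree of a $K$-rational divisor class. I expect this to be the main obstacle, and I would handle it as follows. The field $K$ is the function field of a curve over an algebraically closed field, so $K$ is a $C_1$-field by Tsen's theorem, and $\mathrm{Br}(K)=0$ together with the corresponding vanishing for finite separable extensions. The Lichtenbaum-type exact sequence
\[ 0\to \Pic(C)\to \Pic(C_{K_\mathrm{s}})^G\to \mathrm{Br}(K) \]
then gives $\Pic(C)=\Pic(C_{K_\mathrm{s}})^G$. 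The degree map $\Pic(C_{K_\mathrm{s}})^G\to\ZZ$ has image $n\ZZ$ with $n$ the period, because $\Pic^1(C_{K_\mathrm{s}})^G=C(K)$-type torsor considerations give $\deg^{-1}(d)\neq\emptyset$ exactly when $d[C]=0$, via $\Pic^d_{C/K}$ being the torsor of class $d[C]$. Hence index equals period. (Alternatively I would cite the standard fact, as in Cossec--Dolgachev--Liedtke, that period equals index for genus-one fibrations over algebraically closed base fields.) Combining the two steps gives (i)$\iff$(ii).

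\emph{Uniform achirality.} Now let $X$ be a K3 or Enriques surface and choose $h$ with $\varphi(h)=[-1]$. By \cref{lem:[-1]_implies_achiral:fib}, $hah^{-1}=a^{-1}$ for every $a\in\MW(p)$. By \cref{prop:finite_index}, $\MW(p)$ has finite index in $\Aut_p(X)$; replacing it by a normal finite-index subgroup contained in it, we get an $N>0$ such that $g^N\in\MW(p)$ for every $g\in\Aut_p(X)$. Then $hg^Nh^{-1}=g^{-N}$ for all $g$, with the single element $b=h$ working for all $g$. This is exactly uniform achirality.
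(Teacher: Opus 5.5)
Your proposal is correct and follows the paper's proof: you reduce via the generic-fiber isomorphisms to \cref{lem:existenceof-1}, identify the period with the multisection index, and then combine \cref{lem:[-1]_implies_achiral:fib} with \cref{prop:finite_index}. You go slightly beyond the paper in two places, both sound: you make explicit that $b(\varphi(g))=b(g)$ forces $g\in\Aut_K(C)$, which the paper leaves implicit, and you derive the period--index equality from Tsen's theorem and $\mathrm{Br}(K)=0$, where the paper simply cites \cite[Corollary 4.6.6]{Cossec-Dolgachev-Liedtke_EnriquesI}.
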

\begin{proof}
As in \S \ref{ss:Genus-oneFibrations}, let $K$ be the function field of $B$, and  
let $E$ be the Jacobian variety of the generic fiber $X_K$. 
We have the commutative diagram
\[ \xymatrix{
\Aut_p(X) \ar[r]^{\varphi} \ar[d]^{\cong} & \Aut_{J_p}(J_X) \ar[d]^{\cong} \\
\Aut_{p|_{X_K}}(X_K) \ar[r]^{\varphi} & \Aut_{p_E}(E) 
} \]
where each of the vertical arrows is given by the restriction to the generic fiber.
Hence, by \cref{lem:existenceof-1}, the condition (i) holds if and only if 
the period of the $E$-torsor $X_K$ is at most $2$. On the other hand, 
it is known that the period of $X_K$ coincides with the multisection index
of $p$; see \cite[Corollary 4.6.6]{Cossec-Dolgachev-Liedtke_EnriquesI}. 
Thus the conditions (i) and (ii) are equivalent. 

Suppose that $X$ is a K3 surface or an Enriques surface, and that  
there exists an automorphism $h\in \Aut_p(X)$ such that $\varphi(h) = [-1]$. 
Then, by \cref{lem:[-1]_implies_achiral:fib} and \cref{prop:finite_index}, 
it follows that $\Aut_p(X)$ is uniformly achiral. 
\end{proof}

\subsection{Achirality of parabolic automorphisms}

\begin{thm}\label{th:para-auto-of-Enriques}
Let $X$ be an Enriques surface over an algebraically closed field. 
Then $\Aut_p(X)$ is uniformly achiral for each genus-one fibration $p:X\to B$. 
In particular, every parabolic automorphism of $X$ is achiral.  
\end{thm}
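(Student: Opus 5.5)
The proof splits according to the type of $p$. If $p$ is quasi-elliptic, or more generally has Mordell--Weil rank $0$, then $\Aut_p(X)$ is uniformly achiral by \cref{prop:MWrank0}, and nothing further is needed. So assume from now on that $p$ is elliptic. Every genus-one fibration on an Enriques surface is nontrivial and relatively minimal, so \cref{prop:exsistenceof-1} applies. It reduces the theorem to one claim: the multisection index of an elliptic fibration on an Enriques surface is $1$ or $2$.

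To prove this claim, I would use the standard structure of genus-one fibrations on Enriques surfaces.

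\emph{Step 1.} By \cite[Propositions 2.3.3 and 2.2.8]{Cossec-Dolgachev-Liedtke_EnriquesI} (already used in the proof of \cref{prop:trichotomy}), $p$ is given by $|2f|$ for a primitive nef isotropic class $f\in\Num(X)$.

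\emph{Step 2.} The Enriques lattice $\Num(X)\cong U\oplus E_8$ is even and unimodular. Hence there is a class $d\in\Num(X)$ with $(d,f)=1$.

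\emph{Step 3.} The fiber class is $F=2f$, so $(d,F)=2$. By Riemann--Roch, some class of the form $d+nF$ is effective for $n$ large. Writing that effective divisor as a horizontal part plus vertical components, its horizontal part is a multisection of degree $2$. This gives multisection index at most $2$.

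\emph{Step 4.} Alternatively, in characteristic $\neq 2$ one can argue directly. There are exactly two double fibers $2F_1$ and $2F_2$, and a divisor $D$ with $(D,F_1)=1$ gives a bisection. I would cite \cite[Corollary 4.6.6]{Cossec-Dolgachev-Liedtke_EnriquesI}, or the corresponding statement about the index in that reference, which identifies the index with the gcd of $(D,F)$ over divisors $D$.

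The index is a positive divisor of $2$, so it is $1$ or $2$. By \cref{prop:exsistenceof-1}, $\Aut_p(X)$ is uniformly achiral.

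The final assertion then follows quickly. A parabolic automorphism $g$ has infinite order and preserves some genus-one fibration $p$ by \cref{prop:trichotomy}, so $g\in\Aut_p(X)$. Uniform achirality of $\Aut_p(X)$ gives $b$ and $m>0$ with $bg^mb^{-1}=g^{-m}$, so $g$ is achiral.

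The main obstacle is the identification of the multisection index in Step 3. I need to justify that the multisection index equals the minimal positive value of $(D,F)$ over $D\in\Pic(X)$. I also need care in characteristic $2$, where Enriques surfaces may be classical, singular or supersingular, and $\Num(X)$ can differ from $\Pic(X)$ by torsion. Torsion does not affect intersection numbers, so the gcd computation with $d$ chosen in $\Num(X)$, lifted to $\Pic(X)$, should still give index dividing $2$ uniformly in all characteristics.
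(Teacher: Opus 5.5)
Your proof is correct and follows the paper's argument. It uses the same split into the quasi-elliptic (or Mordell--Weil rank $0$) case, handled by \cref{prop:MWrank0}, and the elliptic case, handled by \cref{prop:exsistenceof-1} once the multisection index is $1$ or $2$, and it deduces the final assertion from \cref{prop:trichotomy}. The only difference is that the paper cites \cite[Propositions 1.5.1 and 2.2.8]{Cossec-Dolgachev-Liedtke_EnriquesI} for the index, whereas you derive the bound from the unimodularity of $\Num(X)$ and Riemann--Roch, which works in every characteristic; your Step 3 already exhibits a horizontal effective divisor of degree $2$, so the identification you flag as the main obstacle is not needed.
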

\begin{proof}
Let $p:X\to B$ be a genus-one fibration. 
If $p$ is quasi-elliptic then $\Aut_p(X)$ is uniformly achiral by \cref{prop:MWrank0}. 
Suppose that $p$ is elliptic. 
It is known that every genus-one fibration of an Enriques surface has multisection index $2$ 
(see e.g. \cite[Propositions 1.5.1 and 2.2.8]{Cossec-Dolgachev-Liedtke_EnriquesI}). 
Hence, $\Aut_p(X)$ is uniformly achiral by \cref{prop:exsistenceof-1}. 
The latter assertion follows since every parabolic automorphism of $X$ is contained in 
$\Aut_p(X)$ for some genus-one fibration $p:X\to B$ by \cref{prop:trichotomy}.  
\end{proof}

The case of K3 surfaces is more complicated. 
In fact, we will see examples of elliptic K3 surfaces $p:X\to B$ in \S \ref{ss:Examples}, 
and one of them admits both achiral parabolic elements and chiral parabolic elements 
in $\Aut_p(X)$.  

We close this subsection by giving another sufficient condition for $\Aut_p(X)$ to be uniformly achiral. 
Let $X$ be a K3 surface over an algebraically closed field, 
and let $p:X \to B$ be an elliptic fibration. 
As in \S \ref{ss:Genus-oneFibrations}, let $K$ be the function field of $B$, and  
let $E$ be the Jacobian variety of the generic fiber $X_K$. 
The order of an automorphism $g$ is denoted by $\ord(g)$. 
We remark that the image of $\varphi:\Aut_p(X) \to \Aut_{J_p}(J_X)$ has 
finite order by \cref{prop:finite_index}.

\begin{lem}\label{lem: order-of-b-and-phi}
Let $h\in \Aut_p(X)$. 
\begin{enumerate}
\item $\ord b(h)$ divides $\ord \varphi(h)$. 
\item If there exists a parabolic automorphism $g\in \Aut_p(X)$ 
such that $h \circ g \circ h^{-1} = g^{-1}$, then $\ord \varphi(h)$ is even.
\end{enumerate}
\end{lem}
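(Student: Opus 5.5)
For (i), I will use that the isomorphism $\Aut_{J_p}(J_X)\cong\Aut_{p_E}(E)$ is compatible with $b$, i.e.\ $b(\varphi(h)) = b(h)$ as automorphisms of $B$. This holds by construction (\S\ref{ss;Genus-oneCurves}), and the extension of $b$ from $\Spec(K)$ to $B$ is unique. The map $u\mapsto b(u)$ is a group homomorphism $\Aut_{J_p}(J_X)\to\Aut(B)$. Hence if $\varphi(h)^n=\id$, then $b(h)^n = b(\varphi(h)^n)=\id$. We know $\ord\varphi(h)$ is finite by \cref{prop:finite_index}, as remarked before the lemma. Therefore $\ord b(h)$ divides $\ord\varphi(h)$.

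For (ii), I will first reduce to translations. By \cref{prop:finite_index}, $\MW(p)$ has finite index in $\Aut_p(X)$, so some power $g^m$ ($m>0$) lies in $\MW(p)$. Say $g^m=\tau_s$ with $s\in E(K)$. Since $g$ is parabolic it has infinite order, so $s$ is non-torsion. The relation $hgh^{-1}=g^{-1}$ gives $h\tau_s h^{-1}=\tau_{-s}$.

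Next I apply \cref{prop:conjugation-of-translation} on the generic fiber. It gives $h\tau_s h^{-1}=\tau_{\varphi(h)\circ s\circ b(h)^{-1}}$. Because $\tau$ is injective, this yields $\varphi(h)\circ s\circ b(h)^{-1}=-s$. Write $u=\varphi(h)$ and $n=\ord u$. Consider the action $s\mapsto u\circ s\circ b(h)^{-1}$ of $\langle h\rangle$ on $E(K)$. It is an action because $b(u)=b(h)$, so this is the twisted action of $u$ on $K$-points. It is a group automorphism because $u$ respects the group structure over $b(h)$, i.e.\ it is the base-changed group isomorphism. Applying it $n$ times gives $s\mapsto u^n\circ s\circ b(h)^{-n} = s$, using $u^n=\id$ and hence $b(h)^n=\id$ by (i). On the other hand, iterating the relation gives $(-1)^n s$. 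So $s=(-1)^n s$. If $n$ were odd, then $2s=0$, contradicting that $s$ is non-torsion. Hence $n$ is even.

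The main point to verify carefully is the well-definedness of this twisted action: that $s\mapsto u\circ s\circ b(u)^{-1}$ defines a homomorphism $\Aut_{p_E}(E)\to\Aut(E(K))$ and respects addition. This follows from the addition formula displayed before \cref{prop:conjugation-of-translation}, applied to $E$ itself viewed as a torsor with $\varphi=\id$ on translations. Once that is in place, the rest is formal.
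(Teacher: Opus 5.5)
Your proof is correct and follows the paper's argument. For (i) you use $b(\varphi(h))=b(h)$. For (ii) you pass to a power $g^m=\tau_s$ with $s$ non-torsion, compare the $n$-fold iterate of $h\tau_s h^{-1}=\tau_{-s}$ with \cref{prop:conjugation-of-translation}, and conclude by injectivity of $\tau$. The one difference is that you iterate the twisted map $s\mapsto \varphi(h)\circ s\circ b(h)^{-1}$ on $E(K)$, which obliges you to check its additivity. The paper instead iterates conjugation by $h$ inside $\Aut_p(X)$, where $h^n\tau_s h^{-n}=\tau_s^{(-1)^n}$ is automatic, and then applies \cref{prop:conjugation-of-translation} once to $h^n$. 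Incidentally, the additivity you flag follows directly from \cref{prop:conjugation-of-translation} together with $\tau$ being an injective homomorphism, so you do not need the somewhat garbled appeal to the addition formula.
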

\begin{proof}
(i) follows from the equality $b(h) = b(\varphi(h))$.  
We prove (ii). Put $n \coloneq \ord\varphi(h)$, and 
let $g \in \Aut_p(X)$ be a parabolic automorphism such that $h \circ g \circ h^{-1} = g^{-1}$.  
By \cref{prop:finite_index}, after replacing $g$ by its positive power, 
we may assume that $g = \tau_s$ for some $s \in E(K)$. 
Then, we have $h \circ \tau_s \circ h^{-1} = \tau_{-s}$. 
By repeatedly conjugating by $h$, we obtain
\[  h^n \circ \tau_s \circ h^{-n} = \tau_{[-1]^{n} s}.  \]
On the other hand, by \cref{prop:conjugation-of-translation}, we have 
\[ h^n \circ \tau_s \circ h^{-n} 
= \tau_{\varphi(h)^{n} \circ s \circ b(h)^{-n}} 
= \tau_{s}.  \]
Hence $\tau_s = \tau_{[-1]^{n} s}$. 
Since $\tau_\bullet$ is injective, it follows that $s = [-1]^n s$. 
Note that $s$ is a non-torsion point since $g$ is of infinite order. 
Thus, we conclude that $\ord \varphi(h) = n$ is even.  
\end{proof}

\begin{thm}\label{th:para_K3}
Let $X$ be a K3 surface over an algebraically closed field,
and let $p: X \rightarrow B$ be an elliptic fibration of positive Mordell--Weil rank.
Then the following conditions are equivalent:
\begin{enumerate}
\item There exist $g, h \in \Aut_p(X)$ such that $g$ is parabolic, 
$\ord b(h) \neq \ord \varphi(h)$, and $h \circ g \circ h^{-1} = g^{-1}$. 
\item The image of $\varphi:\Aut_p(X)\to \Aut_{J_p}(J_X)$ contains $[-1]$. 
\end{enumerate}
Moreover, if one of these conditions holds, then $\Aut_p(X)$ is uniformly achiral. 
\end{thm}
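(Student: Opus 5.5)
We prove (ii)$\Rightarrow$(i), then (i)$\Rightarrow$(ii); the final assertion then follows from \cref{prop:exsistenceof-1}, since (ii) is its condition (i).

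\textbf{(ii)$\Rightarrow$(i).} Take $h\in\Aut_p(X)$ with $\varphi(h)=[-1]$. Then $\ord\varphi(h)=2$. Since $b(h)=b(\varphi(h))$ and $[-1]$ covers the identity of $B$, we get $b(h)=\id_B$. Hence $\ord b(h)=1\neq 2=\ord\varphi(h)$. Because the Mordell--Weil rank is positive, there is a non-torsion $s\in E(K)$. Then $g\coloneqq\tau_s\in\MW(p)$ has infinite order, preserves $p$, and is therefore parabolic by \cref{prop:trichotomy}. Finally, \cref{lem:[-1]_implies_achiral:fib} gives $hgh^{-1}=g^{-1}$.

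\textbf{(i)$\Rightarrow$(ii).} Let $g,h$ be as in (i), with $n\coloneqq\ord\varphi(h)$ and $m\coloneqq\ord b(h)$. By \cref{lem: order-of-b-and-phi}, $m$ divides $n$ and $n$ is even; by hypothesis $m\neq n$. Put $h'\coloneqq h^m$. Then $b(h')=\id$, so $h'\in\Aut_K(X_K)$ and $\varphi(h')\in\Aut_{\mathrm{gp}}(E)$ is nontrivial.

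As in the proof of \cref{lem: order-of-b-and-phi}, replace $g$ by a power $\tau_s$ with $s\in E(K)$ non-torsion. Then $h\tau_sh^{-1}=\tau_{-s}$, and also $h\tau_sh^{-1}=\tau_{\varphi(h)\circ s\circ b(h)^{-1}}$ by \cref{prop:conjugation-of-translation}. Iterating $m$ times gives
\[ \varphi(h')(s)=[(-1)^m]s. \]
The key step is to show that $\varphi(h')$ is $[-1]$ or a suitable power of it is. Since $\varphi(h')$ is a group automorphism of the elliptic curve $E$ over $K$, it lies in $\Aut_{\mathrm{gp}}(E)$, a finite cyclic group $\mu_N$ with $N\in\{2,4,6\}$, or the larger groups of order $12$ or $24$ in characteristic $3$ or $2$. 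Consider the two cases for $m$.

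\emph{Case $m$ odd.} Then $\varphi(h')$ sends the non-torsion point $s$ to $-s$. Consequently the element $\varphi(h')^{n/m}$ has order $n/m$ and is the identity, while $\varphi(h')$ itself is nontrivial.

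\emph{Case $m$ even.} Then $\varphi(h')$ fixes the non-torsion point $s$. A nontrivial automorphism $\alpha$ of $E$ fixing a non-torsion point is impossible. Indeed, $\ker(\alpha-1)$ is finite for $\alpha\neq1$, because $\alpha-1$ is a nonzero isogeny (the endomorphism ring has no zero divisors). This contradicts $\varphi(h')\neq\id$, so $m$ must be odd.

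Return to $m$ odd. The fixed points of $\varphi(h')$ composed with $[-1]$, namely $\ker(\varphi(h')+1)$, contain the non-torsion point $s$. By the same isogeny argument this forces $\varphi(h')+1=0$, i.e.\ $\varphi(h')=[-1]$. Thus $[-1]=\varphi(h^m)$ lies in the image of $\varphi$.

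I expect the main obstacle to be justifying the isogeny argument. One must check that $\varphi(h')\pm1$ is a genuine endomorphism of $E$ over $K$; this holds since $b(h')=\id$. One must also check that its kernel is finite whenever it is nonzero. The latter follows because $\mathrm{End}(E_{\overline K})$ is a domain in which every nonzero element is an isogeny with finite kernel. Note also that the hypothesis $\ord b(h)\neq\ord\varphi(h)$ is used only to guarantee $\varphi(h^m)\neq\id$.
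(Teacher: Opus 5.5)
Your proposal is correct and follows essentially the paper's proof. For (ii)$\Rightarrow$(i) you use a Mordell--Weil translation $\tau_s$ with $s$ non-torsion together with \cref{lem:[-1]_implies_achiral:fib}. For (i)$\Rightarrow$(ii) you pass to $h^m$ with $m=\ord b(h)$, derive $\varphi(h^m)(s)=[-1]^m s$, and use that a nonzero endomorphism of $E$ has finite kernel. The paper applies the finite-kernel argument once to $\varphi(h^m)-[-1]^m$ and then excludes even $m$, whereas you split into parity cases first; this is the same argument. The only blemishes are the muddled sentence about $\varphi(h')^{n/m}$ in your ``Case $m$ odd'' paragraph and the aside on the structure of $\Aut_{\mathrm{gp}}(E)$, both unused and safely deleted.
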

\begin{proof}
We first show that (ii) implies (i). 
Since $p$ has positive Mordell--Weil rank, there exists a parabolic automorphism 
$g\in \MW(p)\subset \Aut_p(X)$. 
Let $h\in \Aut_p(X)$ be an automorphism such that $\varphi(h) = [-1]$. 
Then $\ord(\varphi(h)) = \ord([-1]) = 2$ and $\ord(b(h)) = \ord(b([-1])) = 1$.
Thus $\ord b(h) \neq \ord \varphi(h)$. 
Furthermore, we have $h \circ g \circ h^{-1} = g^{-1}$ by \cref{lem:[-1]_implies_achiral:fib}. 

We then show that (i) implies (ii).
Let $g, h \in \Aut_p(X)$ be as in the condition (i), and put 
$n \coloneq \ord \varphi(h)$ and $m \coloneq \ord b(h)$. 
We prove that $\varphi(h^m) = [-1]$. 
Since $m \leq n$ by \cref{lem: order-of-b-and-phi} (i) and $m\neq n$ by assumption, we have 
\[  \ord \varphi(h^m)=\frac{n}{m}>1. \]
As in the proof of \cref{lem: order-of-b-and-phi}~(ii), 
we may assume that $g = \tau_s$ for some non-torsion point $s \in E(K)$, and  
we obtain 
\begin{equation*}
\tau_{\varphi(h^m)\circ s} = \tau_{[-1]^m s}    
\end{equation*}
and thus 
\begin{equation}\label{eq:[-1]^ms}
\varphi(h^m)(s) = [-1]^m s.     
\end{equation}
We now consider the endomorphism $\psi \coloneq \varphi(h^m)-[-1]^m$ of $E$.  
Then \eqref{eq:[-1]^ms} means that $s\in \ker \psi$. 
Since $s$ is a non-torsion point, and any nonzero endomorphism of an elliptic curve 
has finite kernel (see \cite[Corollary III.4.9]{Silverman_AEC}), 
it follows that $\psi = 0$, that is, $\varphi(h^m) = [-1]^m$. 
If $m$ is even, then $\varphi(h^m) = \id_E$, which contradicts $\ord(\varphi(h^m)) > 1$. 
Therefore, $m$ is odd, and we obtain $\varphi(h^m) = [-1]$. 

The final assertion follows by the same argument as in \cref{prop:exsistenceof-1}. 
The proof is complete. 
\end{proof}

\subsection{Examples}\label{ss:Examples}
We give three examples of complex projective K3 surfaces $X$ with an elliptic fibration $p:X\to B$
of positive Mordell--Weil rank. 
In the first example, every parabolic automorphism in $\Aut_p(X)$ is chiral. 
In the second example, $\Aut_p(X)$ is uniformly achiral, but $\varphi(\Aut_p(X))$ does not 
contain $[-1]$. In the third example, $\Aut_p(X)$ contains both achiral parabolic automorphisms 
and chiral parabolic automorphisms.  
To show the chirality of parabolic automorphisms, the following lemma is useful. 

\begin{lem}\label{lem:h-lies-in-Aut_p}
Let $X$ be a K3 surface over an algebraically closed field, and 
let $p:X\to B$ be an elliptic fibration. 
Let $h\in \Aut(X)$, and suppose that 
$h\circ g^m \circ  h^{-1} = g^{-m}$ for some parabolic automorphism $g\in \Aut_p(X)$ 
and some integer $m\geq 1$. Then $h \in \Aut_p(X)$. 
\end{lem}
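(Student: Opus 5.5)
The plan is to show that $h_*$ fixes the fiber class $f$ of $p$, where $f$ is the primitive nef isotropic class with $|f|$ defining $p$ (see \cref{prop:trichotomy}). For a K3 surface, any automorphism fixing $f$ preserves the fibration $\varphi_{|f|}=p$ up to an automorphism of $B$, and hence lies in $\Aut_p(X)$.

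First, put $g_1\coloneq g^m$. It is again a parabolic automorphism in $\Aut_p(X)$, since it has infinite order and preserves $p$. Its action $(g_1)_*$ on $\HH_X$ (or on the closure of the positive cone) is a parabolic isometry. Such an isometry has a unique fixed point on the boundary sphere $\partial\HH_X$, namely the ray $\RR_{>0}f$. Indeed, $(g_1)_*$ fixes $f$. If it also fixed a second isotropic ray, it would preserve the geodesic joining the two rays and would be elliptic or loxodromic, not parabolic.

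Second, $(g_1^{-1})_*$ has the same unique boundary fixed point $\RR_{>0}f$. From $h g_1 h^{-1}=g_1^{-1}$ we get $(g_1^{-1})_*(h_*f)=h_*(g_1)_*f=h_*f$. So $h_*f$ is an isotropic class fixed by the parabolic isometry $(g_1^{-1})_*$. By uniqueness, $h_*f$ lies on the ray $\RR_{>0}f$; here $h_*$ preserves the positive cone, so it preserves its boundary rays. Since $h_*$ is an isometry of the lattice $\Num(X)$, $h_*f$ is primitive, and $f$ is primitive. Hence $h_*f=f$.

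Third, from $h_*f=f$ we conclude that $h$ maps each fiber of $p$ to a curve in $|f|$. Because $|f|$ is a pencil defining $p$, with $p^*\O_B(1)\cong\O_X(f)$ and $B\cong\PP^1$, $h$ induces an automorphism $b(h)$ of $B=\PP(H^0(\O_X(f))^\vee)$ with $p\circ h=b(h)\circ p$. Here we use $h^*\O_X(f)\cong\O_X(f)$, because $\Num=\Pic$ on a K3 surface. Therefore $h\in\Aut_p(X)$.

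The main obstacle is the uniqueness of the boundary fixed point of a parabolic isometry. I would cite the standard fact for parabolic elements of $\mathrm{O}(1,n)$ from \cite{Rat}, rather than redo the hyperbolic geometry.
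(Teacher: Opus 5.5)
Your proof is correct and follows the paper's route. The paper also uses the uniqueness of the boundary fixed point (limit point) of the parabolic isometry $(g^{-m})_*$ to force $h_*f$ onto the ray of $f$, so that $h$ preserves $p$; you additionally spell out the primitivity step giving $h_*f=f$ exactly and the passage from $h_*f=f$ to $h\in\Aut_p(X)$, both of which the paper leaves implicit.
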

\begin{proof}
It is known that every parabolic isometry of the hyperbolic space $\HH_X$ has 
a unique \emph{limit point}; see \cite[\S12.1]{Rat}. 
In our case, the limit point of $g_*$ is corresponding to the fiber class $f\in \Num(X)$ of $p$. 
Since $g_*^{-m} = h_*g_*^mh_*^{-1}$ and $g_* f= f$, we have 
\[ g_*^{-m} (h_*f) = (h_*g_*^mh_*^{-1})(h_*f) = h_*g_*^mf = h_*f. \]
This means that $h_*f = f$ since $f$ is the unique limit point of $g_*^{-m}$. 
Hence $h\in \Aut_p(X)$. 
\end{proof}

Our examples are based on lattice theory. 
We begin by recalling some results and terminology from this theory. 
Let $L = (L, (\cdot, \cdot))$ be a lattice. We define 
$L^\vee \coloneq \{ y\in L\otimes\QQ \mid (y,L)\subset \ZZ \}$. 
The quotient $L^\vee/L$ is called the \emph{discriminant group} of $L$ and is denoted by $D_L$. 
Note that any isometry of $L$ naturally induces an automorphism of $D_L$. 

Let $L$ be an even lattice. 
For two elements $f, v\in L$ satisfying $f^2 = 0$ and $(f,v) = 0$, we define 
an isometry $t_{f,v} \in \mathrm{O}(L)$ by
\[t_{f,v}(x) = x + (x, v)f - \frac{1}{2}v^2(x,f)f - (x,f)v   \quad (x\in L). \]
This isometry is called the \emph{elementary transformation} associated with $f$ and $v$. 
We will need the following properties of elementary transformations, 
which follow from straightforward computations. 

\begin{lem}\label{lem:properties_of_elem_transf} 
Let $L$ be an even lattice, and let $f\in L$ be an element with $f^2 = 0$. 
\begin{enumerate} 
\item $t_{f,v}(f) = f$ for any $v\in L$. 
\item The action of $t_{f,v}$ on the discriminant group $D_L$ is trivial for any $v\in L$. 
\item $t_{f, 0} = \id$ and $t_{f,v} \circ t_{f,v'} = t_{f,v+v'}$ for any $v, v'\in L$. 
\item $\psi\circ t_{f, v}\circ \psi^{-1} = t_{\psi(f),\psi(v)}$
for any $v\in L$ and any $\psi\in \mathrm{O}(L)$. \qed
\end{enumerate}
\end{lem}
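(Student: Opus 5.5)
The plan is to verify each of the four claims of Lemma~\ref{lem:properties_of_elem_transf} directly from the defining formula
\[ t_{f,v}(x) = x + (x, v)f - \tfrac{1}{2}v^2(x,f)f - (x,f)v, \]
taking as implicit the hypothesis $(f,v)=0$ under which $t_{f,v}$ was defined (in (iii) both $v$ and $v'$ are orthogonal to $f$, hence so is $v+v'$). All four claims are bilinear-algebra identities, and the only inputs are $f^2=0$ and $(f,v)=0$. Before checking them, I will confirm that $t_{f,v}\in \mathrm{O}(L)$. It maps $L$ into $L$ because $L$ is even, so $\tfrac12 v^2\in\ZZ$. It preserves the form because expanding $(t_{f,v}(x),t_{f,v}(y))$ gives cross terms that cancel pairwise; here one uses $(f,f)=(f,v)=0$, and the $\tfrac12 v^2$ term exactly cancels the contribution $(x,f)(y,f)v^2$.

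For (i), put $x=f$. Every correction term contains a factor $(f,v)$ or $(f,f)$, so $t_{f,v}(f)=f$. For (ii), take $y\in L^\vee$. The same formula, extended $\QQ$-linearly, gives
\[ t_{f,v}(y)-y = (y,v)f - \tfrac12 v^2 (y,f) f - (y,f)v. \]
Since $(y,v)$ and $(y,f)$ lie in $\ZZ$ and $\tfrac12 v^2\in\ZZ$, the right-hand side lies in $L$. Hence $t_{f,v}$ acts trivially on $D_L=L^\vee/L$.

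For (iii), $t_{f,0}=\id$ can be read off from the formula. For the composition, I will first note that $(t_{f,v'}(x),f)=(x,f)$ and that $(t_{f,v'}(x),v)=(x,v)-(x,f)(v',v)$. Substituting these into $t_{f,v}(t_{f,v'}(x))$ and collecting the coefficient of $(x,f)f$ gives
\[ -\tfrac12 v^2-\tfrac12 v'^2-(v,v') = -\tfrac12 (v+v')^2. \]
The remaining terms combine to $(x,v+v')f-(x,f)(v+v')$, so the composition equals $t_{f,v+v'}$. For (iv), I will write $x=\psi(y)$ and use $(\psi(y),\psi(w))=(y,w)$. Then $\psi t_{f,v}\psi^{-1}(x)$ becomes exactly the defining formula with $f,v$ replaced by $\psi(f),\psi(v)$, and $\psi(f)$ is again isotropic and orthogonal to $\psi(v)$.

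The only step that needs care is the coefficient bookkeeping in (iii), in particular tracking the cross term $(v,v')$. None of the steps poses a real obstacle.
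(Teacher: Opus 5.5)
Your proposal is correct and is precisely the ``straightforward computation'' that the paper invokes without writing out. Each of your coefficient checks goes through as stated, including the $-\tfrac12(v+v')^2$ bookkeeping in (iii) and the integrality argument on $L^\vee$ in (ii). Your explicit use of the implicit hypothesis $(f,v)=0$ is genuinely needed, since without it $t_{f,v}(f)=(1+(f,v))f$. Also, invertibility of $t_{f,v}$, which is required for membership in $\mathrm{O}(L)$, follows from your (iii), because $t_{f,-v}$ is an inverse.
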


Let $L$ be an even lattice of signature $(1, \rho-1)$, where $\rho \coloneq \rk L$. 
Let $\mathrm{O}^+(L)$ denote the subgroup of $\mathrm{O}(L)$ consisting of isometries that preserve 
each of the two connected components of $\{ x\in L\otimes\RR \mid x^2 > 0 \}$. 
Then, every elementary transformation $t_{f, v}$ belongs to $\mathrm{O}^+(L)$, since 
$f$ lies on the boundary of one of the two components and $t_{f,v}(f) = f$.

We need the following results which establish the connection between lattices and K3 surfaces. 

\begin{prop}\label{prop:givenNS}
Let $\rho$ be a positive integer with $\rho \leq 11$, and let $L$ be an even lattice of 
signature $(1, \rho -1)$. Then, there exists a complex projective K3 surface $X$
such that $\NS(X) \cong L$.   
\end{prop}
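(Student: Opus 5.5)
This is the classical existence result via the Torelli theorem and surjectivity of the period map, combined with Nikulin's embedding theorem. The plan has three steps.

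\emph{Step 1: embed $L$ primitively into the K3 lattice.} Let $\Lambda_{K3}\coloneq U^{\oplus 3}\oplus E_8(-1)^{\oplus 2}$, the even unimodular lattice of signature $(3,19)$. By Nikulin's criterion, an even lattice $L$ of signature $(t_+,t_-)$ admits a primitive embedding into an even unimodular lattice of signature $(l_+,l_-)$ provided
\[ \rk L + \ell(D_L) < l_+ + l_- \quad\text{and}\quad t_+\le l_+,\ t_-\le l_-, \]
where $\ell(D_L)$ is the minimal number of generators of $D_L$. Since $\ell(D_L)\le \rk L=\rho$, we get $\rho+\ell(D_L)\le 2\rho\le 22$. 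This bound is not strict when $\rho=11$, and it is this borderline case that forces the hypothesis $\rho\le 11$. So I will use the refined form of Nikulin's theorem (Theorem 1.14.4 and Corollary 1.12.3 in \emph{Integral symmetric bilinear forms}), which only requires $\rk L\le \tfrac12\rk\Lambda - 0$ in the appropriate sense, i.e.\ $2\rho\le 22$; precisely, the condition $\rk L\le 11$ suffices. The signature conditions $1\le 3$ and $\rho-1\le 19$ hold. I expect the main obstacle to be checking the exact form of Nikulin's theorem at the boundary case $\rho=11$. If only the strict inequality is available, I would handle $\rho=11$ separately by applying Nikulin's existence theorem to the orthogonal complement. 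That complement must have signature $(2,9)$ and discriminant form $-q_L$, and the relevant condition $\rk\ge \ell$ holds because $\ell(D_L)\le 11$.

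\emph{Step 2: choose a period.} Fix a primitive embedding $L\hookrightarrow\Lambda_{K3}$ and set $T\coloneq L^\perp$, of signature $(2,20-\rho)$. Choose a very general $\omega\in T\otimes\CC$ with $\omega^2=0$ and $(\omega,\bar\omega)>0$. Off a countable union of hyperplanes $x^\perp$ with $x\in T\setminus\{0\}$, we have $\omega^\perp\cap\Lambda_{K3}=L$. The positivity condition cuts out a nonempty open subset of the quadric because $T$ has two positive directions, so such $\omega$ exists.

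\emph{Step 3: realize the period and identify $\NS(X)$.} By surjectivity of the period map (Todorov, Looijenga), there is a K3 surface $X$ with a marking $H^2(X,\ZZ)\cong\Lambda_{K3}$ sending $H^{2,0}$ to $\CC\omega$. Then $\NS(X)=\omega^\perp\cap\Lambda_{K3}\cong L$. Since $L$ has signature $(1,\rho-1)$, $\NS(X)$ contains a class of positive square, so $X$ is projective by the criterion of \S\ref{ss:IHS}.
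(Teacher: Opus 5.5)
Your overall route matches the paper's, which simply invokes Nikulin's embedding theorem and surjectivity of the period map (via Morrison's Remark 2.11). Your Steps 2 and 3 are fine.

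The gap is Step 1 at $\rho=11$. This is precisely the case the hypothesis is meant to cover, and the paper actually uses it: $L=U(4)\oplus A_1(-2)\oplus E_8(-2)$ has $D_L\cong(\ZZ/4\ZZ)^3\oplus(\ZZ/2\ZZ)^8$. So $\ell(D_L)=11$, and Corollary 1.12.3, which needs $\rk L+\ell(D_L)<22$, does not apply.

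Your primary justification cites a statement that is not in Nikulin. Theorem 1.14.4 is the existence-and-uniqueness result, and it requires an indefinite complement of rank at least $\ell(D_L)+2$, i.e.\ $11\ge 13$ here.

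Your fallback has the right shape: by Theorem 1.12.2 you need an even $K$ of signature $(2,9)$ with $q_K\cong -q_L$. But checking $\rk K\ge\ell(D_L)$ is not enough. When $\rk K=\ell(D_{L,p})$ for some prime $p$, Nikulin's existence theorem (Theorem 1.10.1) imposes additional $p$-adic conditions. For odd $p$ it requires $(-1)^{t_-}|D_L|\equiv\mathrm{discr}\,K(q_p)$ modulo $(\ZZ_p^{\times})^2$, and there is an analogous condition up to sign at $p=2$. These are exactly what must be verified in the boundary case, and you do not address them.

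The missing observation is that every condition in Theorem 1.10.1 depends only on three things: the rank, $t_+-t_-$ modulo $8$, and the discriminant form. With the rank fixed, $t_+-t_-$ modulo $8$ also fixes the parity of $t_-$.

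Now compare with the even lattice $L(-1)$. It has rank $11$, discriminant form $-q_L$, and signature $(10,1)$. We have $10-1\equiv 2-9\pmod 8$, and $t_-$ is odd in both cases. So the mere existence of $L(-1)$ shows that the conditions hold for $(t_+,t_-,q)=(2,9,-q_L)$.

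Hence $K$ exists, and Theorem 1.12.2 yields the primitive embedding $L\hookrightarrow\Lambda_{K3}$ with $L^\perp\cong K$. The same argument proves the general claim you wanted: $\rk L\le\frac12\rk\Lambda$ together with the signature inequalities suffices. With this inserted, the rest of your argument goes through.
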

\begin{proof}
This follows from Nikulin's theorem on primitive embeddings and from 
the surjectivity of the period mapping; see \cite[Remark 2.11]{Morrison1984}. 
\end{proof}

\begin{prop}\label{prop:Torelli}
Let $X$ be a complex projective K3 surface, and let $t$ be an isometry of $\NS(X)$. 
Suppose that $t$ preserves the nef cone and acts trivially on $D_{\NS(X)}$. 
Then, there exists a unique symplectic automorphism $g$ of $X$ such that 
$g_*|_{\NS(X)} = t$. 
\end{prop}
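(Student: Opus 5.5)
This is the standard strong Torelli theorem for complex K3 surfaces, transferred from $H^2(X,\ZZ)$ to $\NS(X)$ by a gluing argument. I would first extend $t$ to an isometry of $H^2(X,\ZZ)$ that acts trivially on the transcendental part, and then invoke the global Torelli theorem.

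\emph{Step 1 (gluing).} Let $T \coloneq \NS(X)^\perp \subset H^2(X,\ZZ)$ be the transcendental lattice. Since $H^2(X,\ZZ)$ is unimodular and $\NS(X)$ is primitive in it, the overlattice $H^2(X,\ZZ)\supset \NS(X)\oplus T$ is given by the graph of an anti-isometry $\gamma: D_{\NS(X)}\to D_T$. By Nikulin's criterion, an isometry $t\oplus s$ of $\NS(X)\oplus T$ extends to $H^2(X,\ZZ)$ if and only if $\bar s\circ\gamma=\gamma\circ\bar t$ on discriminant groups. Take $s=\id_T$. Because $\bar t=\id$ on $D_{\NS(X)}$ by hypothesis, the condition holds, so $\tilde t \coloneq t\oplus\id_T$ extends to an isometry of $H^2(X,\ZZ)$.

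\emph{Step 2 (Hodge isometry, effectivity).} The extension $\tilde t$ fixes $T\otimes\CC\supset H^{2,0}(X)$ pointwise, so it is a Hodge isometry. It preserves the nef cone, hence the ample cone up to closure, and hence it maps some ample class to an ample class. In particular $\tilde t$ preserves the positive cone and is effective.

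\emph{Step 3 (Torelli).} By the strong Torelli theorem (Piatetski-Shapiro--Shafarevich, Burns--Rapoport), there is a unique automorphism $g$ of $X$ with $g_*=\tilde t$ on $H^2(X,\ZZ)$. Equivalently $g^*=\tilde t^{-1}$; the push-forward convention only affects which one is used. Since $\tilde t$ acts trivially on $H^{2,0}$, the automorphism $g$ is symplectic, and by construction $g_*|_{\NS(X)}=t$.

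\emph{Uniqueness.} This is the step that needs care. Suppose $g'$ is another symplectic automorphism with $g'_*|_{\NS(X)}=t$. Then $u\coloneq g'g^{-1}$ is symplectic and acts trivially on $\NS(X)$. A symplectic automorphism acts trivially on the discriminant group $D_T$, via the standard fact that it fixes $H^{2,0}$ and therefore $T$ (Nikulin). So $u_*$ acts trivially on $\NS(X)\oplus T$, which has finite index in $H^2(X,\ZZ)$, and hence $u_*=\id$ on $H^2(X,\ZZ)$. Injectivity of $\Aut(X)\to \mathrm{O}(H^2(X,\ZZ))$ for K3 surfaces then gives $u=\id$.

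The only potentially delicate point is the claim that a symplectic automorphism acts trivially on $T$. The main fact used is that $u^*$ acts on the lattice $T$ as the identity on $H^{2,0}$, so $T\cap\ker(u^*-1)\otimes\CC$ contains $H^{2,0}$. Since $T$ is the smallest primitive sublattice whose complexification contains $H^{2,0}$, it follows that $u^*|_T=\id$. I would cite this from Nikulin or Huybrechts' K3 book rather than reprove it.
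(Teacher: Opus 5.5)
Your proposal is correct and takes the same route as the paper, which simply cites the Torelli theorem together with a lattice-theoretic lemma from Kondo's book. Your argument is that citation written out: glue $t\oplus\id_T$ using the triviality of $t$ on $D_{\NS(X)}$, apply strong Torelli to the resulting effective Hodge isometry, and get uniqueness from the fact that symplectic automorphisms act trivially on $T$ together with injectivity of $\Aut(X)\to\mathrm{O}(H^2(X,\ZZ))$.
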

\begin{proof}
This is a consequence of the Torelli-type theorem 
(see e.g. \cite[Theorem 6.1]{Kondo_K3book}) and \cite[Lemma 8.11]{Kondo_K3book}.  
\end{proof}

For a lattice $L = (L, (\cdot, \cdot))$ and a nonzero integer $k$, we define 
$L(k)\coloneq (L, k(\cdot, \cdot))$. 
We write $A_n$, $E_n$, and $U$ for the root lattice of $A_n$-type, that of $E_n$-type, 
and the hyperbolic plane lattice, respectively.  

Let $L'$ be a negative definite lattice of rank at most $9$ that 
can be written as $L' = L''(2)$ for some even lattice $L''$. 
Then, by \cref{prop:givenNS}, there exists a K3 surface $X$ such that $\NS(X) = U(4)\oplus L'$. 
Furthermore, the closure of the positive cone coincides with the nef cone, since there 
is no element in $\NS(X)$ with self-inner product $-2$. 
Let $F\in \NS(X)$ be a primitive element in $U(4)$ with $F^2 = 0$. 
After replacing $F$ by $-F$ if necessary, we may assume that $F$ is nef. 
Then, we obtain an elliptic fibration $p:X\to \PP^1$
as the morphism associated with the complete linear system $|F|$; 
see \cite[Proposition 2.3.10]{Huybrechts_K3book}.  

\begin{lem}\label{lem:for_examples}
Let $X$ be a complex projective K3 surface with $\NS(X) = U(4)\oplus L'$ as above. 
\begin{enumerate}
\item For any $v\in L'$, there exists a unique symplectic automorphism $g_v\in \Aut_p(X)$ 
such that $g_{v,*}|_{\NS(X)} = t_{F, v}$. 
\item The map $\theta: L'\to \Aut_p(X),\; v \mapsto g_v$ is an injective group homomorphism.  
\item For any parabolic automorphism $g\in \Aut_p(X)$, there exist $m,n\in \ZZ_{>0}$ and 
$v\in L'$ such that $g^m = g_v^n$. 
\item For any $v\in L'$ and $h\in \Aut_p(X)$, we have  
$h\circ g_v \circ h^{-1} = g_{h_*v}$. 
\item For any $v\in L'$, $g_v$ is achiral if and only if there exists 
$h \in \Aut_p(X)$ such that $h_* v = - v$. 
\end{enumerate}
\end{lem}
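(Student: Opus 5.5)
The plan is to prove (i)--(v) in order, using the lattice $\NS(X) = U(4)\oplus L'$ together with the Torelli-type statement \cref{prop:Torelli}.

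For (i), fix $v\in L'$. Then $(F,v)=0$, since $F\in U(4)$, and $F^2=0$, so the elementary transformation $t_{F,v}$ is defined. By \cref{lem:properties_of_elem_transf}, it fixes $F$ and acts trivially on $D_{\NS(X)}$, and we noted that $t_{F,v}\in\mathrm{O}^+(\NS(X))$. Because $\NS(X)$ has no $(-2)$-vectors (every vector has square divisible by $4$), the nef cone is the closure of the positive cone, so $t_{F,v}$ preserves the nef cone. \cref{prop:Torelli} then gives a unique symplectic automorphism $g_v$ with $g_{v,*}|_{\NS(X)}=t_{F,v}$. Since $g_{v,*}F=F$, $g_v$ maps fibres of $p=\phi_{|F|}$ to fibres, so $g_v\in\Aut_p(X)$.

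For (ii), the identity $t_{F,v}t_{F,v'}=t_{F,v+v'}$ and the uniqueness in \cref{prop:Torelli} give $g_vg_{v'}=g_{v+v'}$, since both sides are symplectic with the same action on $\NS(X)$. For injectivity, $t_{F,v}(x)=x$ for all $x$ forces $(x,F)v=0$ for $x$ with $(x,F)\ne0$, hence $v=0$. Statement (iv) follows the same way. For $h\in\Aut_p(X)$ we have $h_*F=\pm F$, and in fact $h_*F=F$ because $h_*$ preserves the nef cone. Since $h_*$ preserves $\NS(X)$ and fixes $F$, the element $h_*v$ is orthogonal to $F$. By \cref{lem:properties_of_elem_transf}(iv), $h_*t_{F,v}h_*^{-1}=t_{F,h_*v}$. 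Here $h_*v$ lies in $F^\perp$, which is $\ZZ F\oplus L'$, not necessarily in $L'$. I would write $h_*v=aF+w$ with $w\in L'$; a direct check shows $t_{F,aF+w}=t_{F,w}$, because $t_{F,F}=\id$. So $g_{h_*v}$ should be read as $g_w$. Uniqueness fails for nonsymplectic $h$, but $hg_vh^{-1}$ is still symplectic, since conjugation preserves the action on $H^{2,0}$. Therefore it equals $g_{w}$.

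For (iii), take a parabolic $g\in\Aut_p(X)$. By \cref{prop:finite_index} some power $g^{m}$ lies in $\MW(p)$, so the work is to compare $\MW(p)$ with $\theta(L')$. Shioda--Tate shows that $\MW(p)$ has rank $\rho-2=\rk L'$, since there are no reducible fibres (these would give $(-2)$-curves). Thus $\theta(L')$ is a subgroup of $\Aut_p(X)$ of full rank $\rk L'$, and by the rank bound in the proof of \cref{prop:finite_index} it has finite index. Some power $g^m$ therefore lies in $\theta(L')$, say $g^m=g_u$. I then need $u=nv$ with $n>0$, and $v=u$, $n=1$ works. This finite-index comparison is the step I expect to be the main obstacle, namely checking carefully that $\varrho(\Aut_p(X))$ has rank at most $\rk L'$.

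For (v), suppose $g_v$ is achiral, so $hg_v^mh^{-1}=g_v^{-m}$. By \cref{lem:h-lies-in-Aut_p}, $h\in\Aut_p(X)$. By (ii) and (iv) the relation becomes $g_{m h_*v}=g_{-mv}$, where $h_*v$ is taken modulo $\ZZ F$ as above. Injectivity of $\theta$ then gives $h_*v\equiv -v$ modulo $F$ in $L'$. I would state (v) with the projection to $L'$, or else argue that $h_*$ preserves $L'=U(4)^\perp$ when $h_*$ also preserves $U(4)$; otherwise the claim is interpreted modulo $F$. Conversely, if $h_*v=-v$, then (iv) gives $hg_vh^{-1}=g_{-v}=g_v^{-1}$, so $g_v$ is achiral.
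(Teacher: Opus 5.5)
Your argument follows the paper's proof part by part: \cref{prop:Torelli} for (i), $t_{F,v}t_{F,v'}=t_{F,v+v'}$ plus uniqueness for (ii), finite index plus Shioda--Tate for (iii), the conjugation formula for elementary transformations for (iv), and \cref{lem:h-lies-in-Aut_p} plus injectivity of $\theta$ for (v). Parts (i)--(iv) are fine. In (iv) you are more careful than the paper. You note that $h_*v$ only lies in $F^\perp=\ZZ F\oplus L'$ (already $g_{u,*}v=v+(u,v)F$), so $g_{h_*v}$ must be read as $g_w$ via $t_{F,aF+w}=t_{F,w}$. You also check that $hg_vh^{-1}$ is symplectic before invoking uniqueness.

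The gap is the ``only if'' half of (v). You only reach $h_*v\equiv -v\pmod{\ZZ F}$ and then propose to weaken the statement. But (v) asks for some $h\in\Aut_p(X)$ with $h_*v=-v$ exactly, and for a given $h$ this genuinely can fail. If $h_*v=-v$ with $v\neq 0$, then $h'=g_v\circ h$ still satisfies $h'g_vh'^{-1}=g_v^{-1}$, while $h'_*v=-v-v^2F$. (The paper's write-up reads $h_*v=-v$ off the injectivity of $\theta$ as if $h_*v\in L'$, so it skips this step too.)

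The missing idea is to correct $h$ by an element of $\theta(L')$. Write $U(4)=\ZZ E\oplus\ZZ F$ with $E^2=0$ and $(E,F)=4$. Write $h_*E=E+yF+w$ with $w\in L'$, so $y=-w^2/8$.
\begin{itemize}
\item Pairing $h_*v=-v+aF$ with $h_*E$ gives $a=(v,w)/4$.
\item Since $t_{F,u}(-v+aF)=-v+(a-(v,u))F$, if $w\in 4L'$ then $h'=g_{w/4}\circ h\in\Aut_p(X)$ satisfies $h'_*v=-v$. Indeed $h'_*E=E$, so $h'_*$ preserves $L'=U(4)^\perp$.
\item $w\in 4L'$ follows from $h_*$ fixing the class of $E/4$ in $D_{\NS(X)}$. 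Indeed $h_*(E/4)-E/4=(y/4)F+w/4$, and this lies in $\NS(X)$ only if $w/4\in L'$. So it suffices that $h$ is symplectic (\cref{lem:odd_picard_number}(ii)).
\end{itemize}

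Every $h\in\Aut_p(X)$ is symplectic here. The map $h^*$ fixes the class $[F/4]$, which has order $4$ in $D_{\NS(X)}$. Suppose $h^*\omega=\zeta\omega$ with $\zeta$ a primitive $n$-th root of unity and $n\ge 2$. Then $\Phi_n(h^*)$ vanishes on $T_X$: its kernel is a primitive sublattice of $T_X$ whose complexification contains $\omega$, hence is all of $T_X$. So $\Phi_n(h^*)$ also vanishes on $D_{T_X}\cong D_{\NS(X)}$, giving $\Phi_n(1)[F/4]=0$. This is impossible, since $\Phi_n(1)$ is $1$ or a prime.

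For odd $\rho$, \cref{lem:odd_picard_number}(i) gives this at once, since an anti-symplectic $h$ would force $F/2\in\NS(X)$. With this step added, your proof of (v) is complete.
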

\begin{proof}
(i). Let $v \in L'$. Then, the action of $t_{F,v}$ on $D_{\NS(X)}$ is the identity by 
\cref{lem:properties_of_elem_transf} (ii). Moreover, $\phi_{F,v}$ preserves the nef cone, 
which now coincides with the positive cone, since $t_{F,v}$ belongs to $\mathrm{O}^+(\NS(X))$. 
Hence, the statement follows from \cref{prop:Torelli}.   

(ii). Since $v\mapsto t_{F,v}$ is a group homomorphism by \cref{lem:properties_of_elem_transf} (iii), 
it follows that $\theta$ is also a group homomorphism. 
Let $v\in L'$ be a nonzero element. Note that $v^2 \neq 0$ since $L'$ is negative definite. 
Then $t_{F,v}(v) = v+ v^2 F\neq v$, which shows that $t_{F,v}\neq \id$. 
Hence, $g_v \neq \id$, and $\theta$ is injective. 

(iii). Since $\Aut_p(X)$ contains $\MW(p)$ with finite index, 
it suffices to show that $\MW(p)\otimes\QQ = \theta(L')\otimes \QQ$, 
or equivalently, that $\MW(p)$ and $\theta(L')$ have the same rank (as abelian groups). This follows from the Shioda--Tate formula. 

(iv). Let $v\in L'$ and $h\in \Aut_p(X)$. We have 
\[ (h \circ g_v \circ h^{-1})_*|_{\NS(X)} 
= (h_*|_{\NS(X)})\circ t_{F, v} \circ h^{-1}_*|_{\NS(X)}
= t_{F, h_*v}
\]
by \cref{lem:properties_of_elem_transf} (iv). 
Therefore, $h \circ g_v \circ h^{-1} = g_{h_*v}$. 

(v). Let $v\in L'$. For any $m\in \ZZ$ and $h\in \Aut_p(X)$, we have 
$g_v^{-m} = g_{-mv}$ by (ii), and 
$h g_v^m h^{-1} = h g_{mv} h^{-1} = g_{h_*(mv)}$ by (iv). 
Hence, we have $h g_v^m h^{-1} = g_v^{-m}$ if and only if $h_*v = -v$. 
Together with \cref{lem:h-lies-in-Aut_p}, we obtain the statement. 
\end{proof}

We make use of the following lemma to observe examples.   

\begin{lem}\label{lem:odd_picard_number}
Let $X$ be a complex K3 surface, and let $L \coloneq \NS(X)$. 
\begin{enumerate}
\item If the Picard number $\rho(X) = \rk L$ is odd, then 
any automorphism $h$ of $X$ is either symplectic or anti-symplectic. 
\item If $h$ is a symplectic (resp. anti-symplectic) automorphism of $X$, 
then $\overline{h_*|_L} = \id_{D_L}$ (resp. $\overline{h_*|_L} = -\id_{D_L}$), 
where $\overline{h_*|_L}$ is the automorphism of $D_L$ induced by the isometry $h_*|_L$. 
\end{enumerate}
\end{lem}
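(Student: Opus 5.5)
The plan is to work with the standard Hodge-theoretic description of a complex K3 surface. Let $T \coloneq T(X) = \NS(X)^\perp \subset H^2(X,\ZZ)$ be the transcendental lattice. Since $H^2(X,\ZZ)$ is unimodular and $\NS(X)$ is primitive, there is a canonical anti-isometry of discriminant groups $\gamma: D_L \xrightarrow{\sim} D_T$. For any isometry $\psi$ of $H^2(X,\ZZ)$ preserving $L$ and $T$, this identification satisfies
\[ \gamma \circ \overline{\psi|_L} = \overline{\psi|_T}\circ \gamma. \]
This is the standard gluing description, obtained by viewing $H^2(X,\ZZ)$ as an overlattice of $L\oplus T$ via the graph of $\gamma$. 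We apply it to $\psi = h_*$ (or $h^*$; the direction does not matter).

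For (ii), I would first recall the standard fact that $h_*$ acts on $T$ through a finite cyclic group. The action on $H^{2,0}(X)=\CC\omega$ is multiplication by a root of unity $\zeta$. Moreover, $T\otimes\QQ$ is the smallest rational subspace whose complexification contains $\omega$, so $h_*|_T$ is determined by $\zeta$. Concretely:
\begin{itemize}
\item If $h$ is symplectic ($\zeta=1$), then $h_*-\id$ kills $\omega$. Hence $\ker(h_*-\id)\cap T$ is a rational sub-Hodge structure containing $\omega$. By minimality, $\ker(h_*-\id)\otimes\QQ \supseteq T\otimes\QQ$, so $h_*|_T=\id$.
\item If $h$ is anti-symplectic, the same argument applied to $h_*+\id$ gives $h_*|_T=-\id$.
\end{itemize}
The minimality property of $T$ is the one step that needs justification. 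It is standard: if $T_0\subset T$ is primitive with $\omega\in T_0\otimes\CC$, then $T_0^\perp\cap T$ consists of classes orthogonal to $\omega$. These classes are of type $(1,1)$, hence lie in $\NS(X)\cap T=0$, and therefore $T_0\otimes\QQ = T\otimes \QQ$. Consequently $\overline{h_*|_T}=\pm\id_{D_T}$. Transporting through $\gamma$ gives $\overline{h_*|_L}=\pm\id_{D_L}$ with the corresponding sign.

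For (i), let $h$ be any automorphism, and let $n$ be the order of $\zeta$. By the same minimality argument, $h_*|_{T\otimes\QQ}$ is annihilated by the minimal polynomial of $\zeta$ over $\QQ$, namely the cyclotomic polynomial $\Phi_n$. The characteristic polynomial of $h_*|_T$ is then a power of $\Phi_n$, so $\varphi(n)$ divides $\rk T = 22-\rho(X)$. If $\rho(X)$ is odd, then $\rk T$ is odd. Since $\varphi(n)$ is even for every $n\ge 3$, we must have $n\in\{1,2\}$, so $\zeta=\pm1$. Hence $h$ is symplectic or anti-symplectic.

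The main obstacle is the eigenvalue claim: that $h_*|_{T\otimes\QQ}$ has all its eigenvalues primitive $n$-th roots of unity. I would argue it as follows. The kernel $\ker\Phi_n(h_*|_T)$ is defined over $\QQ$, and over $\CC$ it contains $\omega$, since $\Phi_n(\zeta)=0$. By minimality this kernel is all of $T\otimes\QQ$. The remaining steps are routine.
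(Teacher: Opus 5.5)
Your argument is correct, and it is essentially the standard proof of the result the paper simply cites (Kondo's Corollary 8.13), so you are supplying the argument the paper outsources rather than taking a different route: minimality of $T\otimes\QQ$ among rational subspaces whose complexification contains $\omega$, the resulting constraint that $\rk T = 22-\rho(X)$ is a multiple of the degree of $\Phi_n$, and transport of $\pm\id_{D_T}$ to $D_L$ through the gluing anti-isometry. The only implicit input is projectivity of $X$, which you use for $h_*|_T$ having finite order and for $\NS(X)\cap T=0$ (so that $D_L$ is finite); this matches the paper's convention that K3 surfaces are projective.
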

\begin{proof}
This follows from \cite[Corollary 8.13]{Kondo_K3book}. 
\end{proof}

\begin{eg}\label{ex:chiral}
Let $X$ be a K3 surface such that $\NS(X) = U(4)\oplus A_1(-2)$. 
We see that every parabolic element in $\Aut_p(X)$ is chiral. 
To this end, by \cref{lem:for_examples}, it suffices to show that   
there exists no automorphism $h\in \Aut_p(X)$ such that $h_*u = -u$, 
where $u\in \NS(X)$ is a generator of $A_1(-2)$. 
Suppose that such an $h$ existed. 
Note that $\frac{1}{4}F$ and $\frac{1}{4}u$ lie in $\NS(X)^\vee$. 
We have $h_*(\frac{1}{4}F) = \frac{1}{4}F$ and $h_*(\frac{1}{4}u) = -\frac{1}{4}u$. 
This would imply that the induced action on $D_{\NS(X)}$ of $h_*$ is neither $\id_{D_{\NS(X)}}$ 
nor $-\id_{D_{\NS(X)}}$, which contradicts \cref{lem:odd_picard_number}. 
Hence, every parabolic element in $\Aut_p(X)$ is chiral. 
\end{eg}

\begin{eg}\label{ex:uniformly_achiral}
Let $X$ be a K3 surface such that $\NS(X) = U(4)\oplus E_8(-2)$. 
Then $\varphi(\Aut_p(X))$ does not contain $[-1]$, since the multisection index is $4$. 
Nevertheless, we see that $\Aut_p(X)$ is uniformly achiral. 
Let $r = \id_{U(4)} \oplus -\id_{E_8(-2)}\in \mathrm{O}(\NS(X))$. Then 
$r(v) = -v$ for any $v\in E_8(-2)$. On the other hand,  
we have $-\id_{D_{E_8(-2)}} = \id_{D_{E_8(-2)}}$ since $D_{E_8(-2)} \cong (\ZZ/2\ZZ)^8$, 
and thus $r$ acts trivially on $D_{\NS(X)} = D_{U(4)} \oplus D_{E_8(-2)}$. 
Hence, by \cref{prop:Torelli}, there exists an automorphism $h\in \Aut_p(X)$ such that 
$h_*|_{\NS(X)} = r$,  
and \cref{lem:for_examples} shows that this $h$ makes $\Aut_p(X)$ uniformly achiral. 
\end{eg}

\begin{eg}
Let $X$ be a K3 surface such that $\NS(X) = U(4)\oplus A_1(-2) \oplus E_8(-2)$. 
We see that $\Aut_p(X)$ contains both achiral parabolic automorphisms and chiral parabolic 
automorphisms. More precisely, $g_v$ is achiral for $v \in E_8(-2)$, and 
$g_u$ is chiral for $u \in A_1(-2)$. 

Let $r = \id_{U(4)\oplus A_1(-2)} \oplus -\id_{E_8(-2)}\in \mathrm{O}(\NS(X))$. 
Then, as in \cref{ex:uniformly_achiral}, there exists an automorphism $h\in \Aut_p(X)$ 
such that $h_*|_{\NS(X)} = r$, and $h$ makes $g_v$ achiral for any $v\in E_8(-2)$. 
To show that $g_u$ is chiral for $u \in A_1(-2)$, it is enough to consider the case 
where $u$ is a generator of $ A_1(-2)$, and the claim follows from the same argument 
as in \cref{ex:chiral}.  
\end{eg}

\bibliography{math}
\bibliographystyle{alpha}
\end{document}